\documentclass[a4paper,twoside,10pt]{article}
\usepackage[a4paper,left=2.4cm,right=2.4cm, top=2.5cm, bottom=2.5cm]{geometry}
\usepackage{cuted}

\usepackage{xcolor}
\definecolor{darkblue}{rgb}{0.0, 0.0, 0.55}

\usepackage{amsmath}
\usepackage{amssymb}
\usepackage{amsthm}
\usepackage{amscd}
\usepackage{mathrsfs}
\usepackage{mathtools}
\usepackage{bbm}
\usepackage{stmaryrd}
\SetSymbolFont{stmry}{bold}{U}{stmry}{m}{n}
\usepackage{cancel}
\allowdisplaybreaks

\usepackage{graphicx}
\usepackage[font=footnotesize,labelfont=bf]{caption}
\usepackage{subcaption}
\usepackage{float}
\usepackage{pgfplots}
\pgfplotsset{compat=1.18}

\usepackage{array}
\usepackage{multirow}
\usepackage{booktabs}
\usepackage{blkarray}
\usepackage{pgfplotstable} 

\usepackage{enumitem}
\usepackage{cite}

\usepackage{orcidlink}
\usepackage{hyperref}

\newcommand{\jump}[1]{\llbracket #1 \rrbracket}
\newcommand{\av}[1]{\{\!\!\{#1\}\!\!\}}

\numberwithin{equation}{section}
\hypersetup{
    colorlinks,
    linktoc=section,
    citecolor=red,
    linkcolor=darkblue,
    urlcolor=magenta,
    pdfborder={0 0 0}
}

\newtheorem{theorem}{Theorem}[section]
\newtheorem{lemma}[theorem]{Lemma}
\newtheorem{proposition}[theorem]{Proposition}
\newtheorem{assumption}{Assumption}[section]
\newtheorem{corollary}[theorem]{Corollary}
\newtheorem{remark}[theorem]{Remark}
\newtheorem{definition}[theorem]{Definition}
\newcommand{\eremk}{\hbox{}\hfill\rule{0.8ex}{0.8ex}}

\newcommand{\bO}{\boldsymbol{0}}
\newcommand{\R}{\mathbb{R}}
\newcommand{\N}{\mathbb{N}}
\newcommand{\bx}{\mathbf{x}}
\newcommand{\by}{\mathbf{y}}
\newcommand{\bw}{\mathbf{w}}
\renewcommand{\H}{\boldsymbol{H}}
\renewcommand{\L}{\boldsymbol{L}}

\newcommand{\W}{\boldsymbol{\boldsymbol{W}}}

\newcommand{\dx}{\,\mathrm{d}\bx}
\newcommand{\dy}{\,\mathrm{d}\by}

\newcommand{\dt}{\,\mathrm{d}t}

\newcommand{\Norm}[2]{\|#1\|_{#2}}
\newcommand{\semiNorm}[2]{| #1 |_{#2}}

\newcommand{\calD}{\mathcal{D}}
\newcommand{\calH}{\mathcal{H}}
\newcommand{\QT}{Q_T}
\newcommand{\Deltax}{\Delta_{\bx}}
\newcommand{\nablax}{\nabla_{\bx}}
\newcommand{\Nablax}{\boldsymbol{\nabla}_{\bx}}
\newcommand{\Nablay}{\boldsymbol{\nabla}_{\by}}
\newcommand{\divx}{\mathrm{div}_{\bx}}
\newcommand{\divy}{\mathrm{div}_{\by}}

\newcommand{\A}{\mathcal{A}}
\newcommand{\At}{\mathcal{A}_t}
\renewcommand{\u}{\boldsymbol{u}}
\renewcommand{\v}{\boldsymbol{v}}
\newcommand{\z}{\boldsymbol{z}}
\newcommand{\bvphi}{\boldsymbol{\varphi}}
\newcommand{\hu}{\widehat{\boldsymbol{u}}}
\newcommand{\hv}{\widehat{\boldsymbol{v}}}
\newcommand{\f}{\boldsymbol{f}}
\newcommand{\nF}{\boldsymbol{n}_F}

\newcommand{\dpt}{\partial_t}
\newcommand{\ddt}{\frac{\mathrm{d}}{\mathrm{d}t}}
\newcommand{\Dt}{D_t}

\DeclareMathOperator{\ddiv}{div}

\DeclareMathOperator{\esssup}{ess\,sup}
\DeclareMathOperator{\essinf}{ess\,inf}
\newcommand{\CPF}{C_{\mathrm{PF}}}
\DeclareMathOperator{\tr}{tr}

\newcommand{\ThO}{\widehat{\mathcal{T}}_h}
\newcommand{\Tht}{{\mathcal{T}}_h(t)}
\newcommand{\Tt}{\mathcal{T}_{\tau}}
\newcommand{\Kh}{\widehat{K}}
\newcommand{\diam}{\mathrm{diam}}
\newcommand{\hK}{h_K}
\newcommand{\hF}{h_F}
\newcommand{\Fh}{\widehat{\mathcal{F}}_h}
\newcommand{\Fho}{\widehat{\mathcal{F}}_h^{\mathcal{I}}}
\newcommand{\Fhb}{\widehat{\mathcal{F}}_h^{\partial}}
\newcommand{\Qn}{\mathcal{Q}_n}
\newcommand{\Qj}{\mathcal{Q}_j}
\newcommand{\In}{I_n}
\newcommand{\tnmo}{t_{n-1}}
\newcommand{\tn}{t_n}
\newcommand{\Ft}{F_t}
\newcommand{\Kt}{K_t}
\newcommand{\calK}{\mathcal{K}}
\newcommand{\nFt}{\boldsymbol{n}_{\Ft}}

\newcommand{\Vhk}{\widehat{\boldsymbol{\mathcal{V}}}_h^k}

\newcommand{\Mhk}{\widetilde{\mathcal{M}}_h^{k-1}}
\newcommand{\Vht}{\boldsymbol{\mathcal{V}}_{h\tau}}
\newcommand{\Zht}{\boldsymbol{\mathcal{Z}}_{h\tau}}
\newcommand{\Mht}{{\mathcal{M}}_{h\tau}}

\newcommand{\ba}{\boldsymbol{\widehat{\alpha}}}
\newcommand{\hb}{\widehat{\beta}}
\newcommand{\uht}{\u_{h\tau}}
\newcommand{\vht}{\v_{h\tau}}
\newcommand{\zht}{\z_{h\tau}}
\newcommand{\vh}{\v_h}

\newcommand{\vvht}{\boldsymbol{\varphi}_{h\tau}}
\newcommand{\pht}{p_{h\tau}}
\newcommand{\qht}{q_{h\tau}}
\newcommand{\Pp}[2]{\mathbb{P}^{#1}(#2)}

\newcommand{\ah}{a_h}
\newcommand{\aht}{a_{h,\tau}}

\newcommand{\cht}{c_{h,\tau}}
\newcommand{\Bht}{\mathcal{B}_{h,\tau}}

\newcommand{\Id}{\mathrm{Id}}

\newcommand{\Piht}{\Pi_{h\tau}}

\newcommand{\eu}{\boldsymbol{e}_{\u}}
\newcommand{\epi}{\boldsymbol{e}_{\u}^{\Pi}}
\newcommand{\Pt}{\mathcal{P}_{\tau}}

\newcommand{\IBDM}{\widehat{\mathcal{I}}_h^{\mathrm{BDM}}}
\newcommand{\PihM}{\Pi_{h}^{\mathcal{M}}}

\title{Pressure-robust ALE space--time DG method for the\\
Stokes equations on moving domains} 
\author{L. Beir\~ao da Veiga\thanks{Department of Mathematics and Applications, University of Milano-Bicocca, Via Cozzi 55, 20125 Milan, Italy (\href{mailto: lourenco.beirao@unimib.it}{lourenco.beirao@unimib.it}, \href{mailto:sergio.gomezmacias@unimib.it}{sergio.gomezmacias@unimib.it}, \href{mailto:k.haile@campus.unimib.it}{k.haile@campus.unimib.it})} \thanks{IMATI-CNR ``E. Magenes", Via Ferrata 5, 27100 Pavia, Italy}\ \orcidlink{0000-0001-5895-469X} \and S. G\'omez\footnotemark[1] \footnotemark[2]\ \orcidlink{0000-0001-9156-5135} \and K. B. Haile\footnotemark[1]\ \orcidlink{0009-0007-2941-0926}
}
\date{}

\begin{document}

\maketitle
\begin{abstract}
    \noindent 
    We propose and analyze a space--time discontinuous Galerkin method for the incompressible Stokes equations on moving domains within the arbitrary Lagrangian--Eulerian setting.
    We use a contravariant Piola map in the definition of the discrete velocity space to preserve the pointwise divergence-free property on the discrete level. We show that the method is inf--sup stable, with no constraints on the spatial mesh or the time partition. We also establish a priori error estimates in the energy norm for arbitrary degrees of approximation in space and time. For piecewise-constant and piecewise-linear approximations in time, we show that the method is also robust at low viscosity regimes, and provide numerical evidence suggesting that this property extends to high-order cases as well. 
    We present several numerical experiments to validate our theoretical findings.
\end{abstract}
\paragraph{Keywords.} Space--time method, discontinuous Galerkin, arbitrary Lagrangian--Eulerian, incompressible Stokes equations, pressure-robust approximation
\paragraph{2020 Mathematics Subject Classification.}
65M12, %
65M15, %
65M60, %
35Q35 %

\section{Introduction}\label{sec:intro}
This work concerns the design and analysis of a space--time discontinuous Galerkin method for the  incompressible Stokes equations on moving domains, which preserves the exact divergence-free property for the velocity.
\paragraph{The ALE setting for moving domains.}
Let~$T > 0$ be a prescribed final time. For~$d \in \{2, 3\}$, we consider a \emph{moving domain} as an indexed family~$\{\Omega(t)\}_{t \in [0, T]}$, where~$\Omega(t) \subset \R^d$ is an open bounded domain, for each~$t \in [0, T]$. We assume (only for simplicity, see Remarks~\ref{rem:geo-exact} and~\ref{rem:initial} for natural generalizations) that the initial domain $\Omega_0 := \Omega(0)$ is an open polytope that also serves as our \emph{reference domain}.

Following the ALE approach, we assume that the boundary deformation is determined by a velocity~$\bw$, which can be extended with standard techniques to  also dictate the bulk deformation. For each~$t \in [0, T]$, such an extension induces a bijective map~$\At : \Omega_0 \to \Omega(t)$. Moreover, the family of maps~$\{\At\}_{t \in [0, T]}$ can be used to define the space--time function~$\A(\by, t) := \At(\by)$, which satisfies
\begin{equation}\label{eq:w-x-identity}
    \bw(\bx, t) = {\dpt} \A (\by, t) \quad \forall \by \in \Omega_0, \text{ with } \bx = \At(\by) .
\end{equation}
In what follows, we will consistently use~$\bx$ for the physical spatial coordinate in~$\Omega(t)$ and~$\by$ for the coordinate in the reference domain~$\Omega_0$. This convention is also used to stress the variable of differentiation for differential operators. 

The space--time domain is defined as
\begin{equation*}
    \QT := \big\{(\bx, t) \in \R^d \times \R^+ \ : \ t \in (0, T) \text{ and } \bx = \At(\by), \text{ for some } \by \in \Omega_0 \big\},
\end{equation*}
or, equivalently, $\QT$ is the image of the tensor-product domain~$\Omega_0 \times (0, T)$ under the extended space--time mapping $(\by, t) \mapsto (\A(\by, t), t)$.
\paragraph{Model problem.} 
Given a constant viscosity~$\nu > 0$, a source term~$\f : \QT \to \R^d$, and an initial datum~$\u_0 : \Omega_0 \to \R^d$, 
we consider the following time-dependent Stokes equations: 
find the velocity~$\u : \QT \to \R^d$ and the (minus) pressure~$p : \QT \to \R$ such that
\begin{subequations}\label{eq:model-problem}
    \begin{alignat}{3}
        \dpt \u - \nu \Deltax \u - \nablax p & = \f & & \quad \text{in~$\QT$}, \\
        \divx \u & = 0 & & \quad \text{in~$\QT$}, \\
        \u & = \bO & & \quad \text{on~$\partial\Omega(t) \times \{t\}, \ t \in [0, T]$,} 
        \label{eq:model-problem-bc} \\
        \u(\cdot, 0) & = \u_0(\cdot) & & \quad \text{in~$\Omega_0$}.
    \end{alignat}
\end{subequations}
In the ALE framework, model~\eqref{eq:model-problem} can be rewritten using the~\emph{material derivative}, which corresponds to the time derivative along the characteristic lines of the ALE map~$\A$. For a sufficiently smooth function~$\boldsymbol{\varphi}$, it is defined, via the chain rule, as
\begin{equation*}
    \Dt \boldsymbol{\varphi}(\bx, t) := \ddt \boldsymbol{\varphi}(\At(\by), t) = \dpt \boldsymbol{\varphi}(\bx, t) + \Nablax \boldsymbol{\varphi} (\bx, t) \bw(\bx, t) \, , \quad \bx = \At(\by),
\end{equation*}
where~$\Nablax$ denotes the spatial gradient operator acting componentwise. The resulting reformulation of~\eqref{eq:model-problem} reads: 
find the velocity~$\u : \QT \to \R^d$ and the (minus) pressure~$p : \QT \to \R$ such that
\begin{subequations}\label{eq:model-problem-ALE}
    \begin{alignat}{3}
        \Dt \u - (\Nablax \u) \bw - \nu \Deltax \u - \nablax p & = \f & & \quad \text{in~$\QT$}, \\
        \divx \u & = 0 & & \quad \text{in~$\QT$}, \\
        \u & = \bO & & \quad \text{on~$\partial\Omega(t) \times \{t\}, \ t \in [0, T]$,}\\
        \u(\cdot, 0) & = \u_0(\cdot) & & \quad \text{in~$\Omega_0$}.
    \end{alignat}
\end{subequations}
The specific condition \eqref{eq:model-problem-bc} is included only for simplicity; the method and analysis in the present contribution can be easily extended to different boundary conditions.
\paragraph{Previous works.}
Several approaches have been proposed in the literature for the numerical approximation of PDEs on moving domains within the ALE framework. Below, we briefly describe some of the main ideas employed in the design of high-order space--time methods.
\begin{enumerate}[label = {\emph{\roman*)}}, ref = {\emph{\roman*)}}, topsep = 0pt, itemsep = 0pt]
    \item A first approach consist of approximating the domain at each time level with a standard mesh, and subsequently constructing space--time elements using linear interpolation of~$\A$ in time for internal elements and high-order interpolation for curved elements on the boundary. The discrete spaces are then defined through isoparametric mappings from a reference element; see, e.g., \cite{Van_der_Vegt_Van_der_Ven:2002,Rhebergen_Cockburn:2013,Rhebergen_Cockburn:2012}. 
    \item An alternative approach is to directly decompose the space--time domain with $(d+1)$-dimensional simplices or polytopes and treat the time variable as an additional spatial dimension; see~\cite{Horvath_Rhebergen:2020,Horvath_Rhebergen:2019} for hybridizable discontinuous Galerkin methods, and the review~\cite{Gaburro:2021} on finite volume and discontinuous Galerkin (DG) schemes. 
    \item One can also formulate the problem in the reference coordinates, and then apply standard discretizations for tensor-product space--time domains; see~\cite{Persson_Peraire:2009}.
    \item \label{approach-iv} The high-order DG-in-time approach of~\cite{Bonito_Kyza_Nochetto:2013b,Bonito_Kyza_Nochetto:2013} extends the ideas of~\cite{Formaggia_Nobile:1999,Formaggia_Nobile:2004} to high-order approximations in time. 
    This approach has also been applied  in~\cite{Balzasov_Feistauer:2015,Balazsov_Feistauer_Vlasak:2018} to nonlinear convection--diffusion equations.
    In this framework, the discrete space at each time level is obtained by transporting the reference basis functions through the ALE map~$\A$. In particular, the spaces considered in~\cite{Bonito_Kyza_Nochetto:2013,Bonito_Kyza_Nochetto:2013b} are piecewise polynomials in time along the trajectories induced by~$\A$, and can be combined with a conforming finite element space defined on the reference mesh.
\end{enumerate}
We refer to~\cite[\S1]{Rao_Wang_Xie:2025} for a recent comprehensive overview of stability results and a priori error estimates for finite element methods in the ALE setting.
\paragraph{A specific challenge.}
A major challenge in the fully discrete (i.e., discrete in \emph{both} space and time) approximation of incompressible fluid problems in moving domains is ensuring the solenoidal constraint and pressure-robustness. 
Indeed, this property is intimately related to employing a discrete velocity/pressure pair that satisfies a De Rham diagram (see, e.g., \cite[\S4.3 and 4.4]{john2017divergence}). 
When considering a high-order method with exact (or high-order) geometric approximation, achieving pressure-robustness essentially requires using a Piola map to define the velocity space in the physical domain. For general geometries, this is often required also for standard finite elements, as many inf--sup stable pairs degenerate under non-affine mappings unless a Piola transform is used to define the velocity space. 
On the other hand, incorporating a Piola map introduces a very strong tangling of space and time, posing additional difficulties in the analysis of the method. For instance, unlike what happens for a standard push-forward, the presence of the Piola map implies that the material derivative of a ``polynomial in time'' discrete velocity is not a function of the same kind with one degree lower. 
This prevents the use of many standard arguments used in the literature for the analysis of DG time discretizations; cf. \cite{Gomez:2026b}.
\paragraph{Present contribution.}
We propose a space--time DG method for the time-dependent Stokes equations on moving domains. 
The proposed method is closely related to approach~\ref{approach-iv}, but (in addition to addressing also the space discretization) employs the controvariant Piola map associated with~$\A$ to transport the discrete spaces. Combined with the use of~$\H(\mathrm{div})$-conforming velocity spaces \cite{Guzman_Shu_Sequeira:2017,schroeder2018divergence,barrenechea_etal:2020,Han_Hou:2021,da2025reynolds} and a compatible pressure discretization in the reference domain, this construction preserves the divergence structure under the ALE mapping and allows the divergence-free constraint to be satisfied exactly (up to quadrature errors) at all times. Furthermore, in this approach, %
the geometry of the domain is approximated exactly (see Remarks \ref{rem:initial} and \ref{rem:geo-exact}).

The main advancements in this work are:
\begin{itemize}
    \item We derive stability bounds and establish a priori error estimates for arbitrary degrees of approximation in both space and time. Moreover, we do not require smallness conditions on the time steps, nor do we require (global or local) quasi-uniformity of the spatial and temporal partitions.
    \item The method is pressure-robust, meaning that modifications of the data which only affect pressure at the continuous level will maintain such a property also for the discrete solution. As a consequence, velocity errors are not influenced by pressure errors, which renders the velocity approximation free from pressure-induced pollution. 
    \item For the low-order cases (with approximations in time of degree~$\ell = 0$ or~$\ell = 1$), we show that the method is robust with respect to small values of the %
    viscosity parameter. Preliminary numerical experiments suggest that this property holds also for high-order approximations in time. 
\end{itemize}
In addition to the theoretical derivations, we present numerical tests in $(2+1)$ dimensions which support our theoretical findings. These include verifying the expected convergence orders in both space and time, assessing the pressure-robustness, and showing uniformly bounded error for small values of the viscosity parameter~$\nu$.
\paragraph{Structure of the article.}
The remainder of this work is organized as follows. 
Section~\ref{sec:2} introduces some basic notation and provides a set of preliminary results for the mappings involved. We then present the space--time method in Section~\ref{sec:3}. The stability and convergence analyses are developed in Sections~\ref{sec:4} and~\ref{sec:5}, respectively. In Section~\ref{sec:6}, we establish an improved result valid for the low-order cases in time. Finally, the numerical experiments are presented in Section~\ref{sec:numerical-tests}.

\section{Notation and maps}\label{sec:2}
We use standard notation for~$L^p$ and Sobolev spaces, as well as for Bochner spaces of functions defined in the tensor-product domain~$\Omega_0 \times (0, T)$. Moreover, we will introduce some suitable spaces for functions defined in the space--time domain~$\QT$. 

For a bounded domain~$\calD \subset \R^{d}$ ($d \in \{1, 2, 3\}$) with Lipschitz boundary~$\partial \calD$, $m \in \N$, and~$p \in [1, \infty]$, we denote by~$W^{m,p}(\calD)$ the corresponding Sobolev space on~$\calD$ equipped with the standard seminorm~$\semiNorm{\cdot}{W^{m,p}(\calD)}$ and norm $\Norm{\cdot}{W^{m,p}(\calD)}$.
For~$m=0$, we obtain the Lebesgue space~$L^{p}(\calD) := W^{0,p}(\calD)$, and, in particular, $L^2(\calD)$ is the space of Lebesgue square-integrable functions with inner product~$(\cdot, \cdot)_{\calD}$ and norm~$\Norm{\cdot}{L^2(\calD)}$. 
For~$p=2$, we have the Hilbert space~$H^{m}(\calD) := W^{m,2}(\calD)$, with~$H_0^1(\calD)$ denoting the subspace of functions in~$H^1(\calD)$ with zero trace on~$\partial \calD$. 
We use boldface to denote spaces of $d$-vector-valued functions, as well as their elements.
Moreover, we denote by~$\H(\ddiv; \calD)$ the subspace of functions in~$\L^2(\calD)$ with divergence in~$L^2(\calD)$.

Given a time interval~$(a, b)$, $p \in [1, \infty]$, $m \in \N$, and a separable Banach space~$(Z, \Norm{\cdot}{Z})$, the corresponding Bochner spaces are given by
\begin{alignat*}{3}
    L^p(a, b; Z) & := \Big\{v : (a, b) \to  Z \ : \ v \ \text{measurable and} \  \Norm{v}{L^p(a, b; Z)} < \infty\ \Big\}, \\
    W^{m, p}(a, b; Z) &: = \Big\{v \in L^p(a, b; Z) \ : \ \partial_t^{(i)} v \in L^p(a, b; Z), \ \text{for } i =1, \ldots, m \Big\},
\end{alignat*}
where
\begin{equation*}
    \Norm{v}{L^p(a, b; Z)} :=
    \begin{cases}
        \displaystyle \Big(\int_a^b \Norm{v(t)}{Z}^p  \dt\Big)^{1/p} & \text{ if } p \in [1, \infty), \\
        \esssup_{t \in (a, b)} \Norm{v(t)}{Z} & \text{ if } p = \infty.
    \end{cases}
\end{equation*}
For functions defined in~$\QT$, let~$\{Y(t)\}_{t\in [0, T]}$ denote a family of Sobolev spaces, where~$Y(t)$ is a space of functions defined in~$\Omega(t)$ for~$t \in [0, T]$ (e.g., $Y(t) = L^2(\Omega(t))$ or~$Y(t) = H_0^1(\Omega(t))$). We define
\begin{alignat*}{3}
    L^p(Y; \QT) & := \Big\{v \ : \ v(t)  \in Y(t) \text{ for a.e. $t \in [0, T]$} \text{ and } \Big(\int_0^T \Norm{v(\cdot, t)}{Y(\Omega(t))}^p \dt \Big)^{1/p} < \infty \Big\}, \quad  p \in [1, \infty), \\
    L^{\infty}(Y; \QT) &:= \Big\{v \ : \ v(t)  \in Y(t) \text{ for a.e. $t \in [0, T]$} \text{ and }  \esssup_{t \in (0, T)} \Norm{v(\cdot, t)}{Y(\Omega(t))} < \infty \Big\}, \\
    W^{m, p}(Y; \QT) & := \Big\{v \in L^p(Y; \QT) \ : \ \dpt^{(i)} v \in L^p(Y; \QT), \ i = 0, \ldots, m \Big\},  \hspace{0.9in}  p \in [1, \infty], \ m \in \N ,
\end{alignat*}
with the obvious corresponding norms. We will conveniently use the notation~$H^m(Y; \QT) := W^{m, 2}(Y; \QT)$.

\smallskip\noindent
The derivations of the present work hold under the following assumption on the map~$\At$.
\begin{assumption}[Regularity of~$\At$]\label{asm:At}
    Recalling that~$\A$ is the space--time function induced by the family of maps~$\{\At\}_{t \in [0, T]}$, we assume
    \begin{enumerate}[label = {(\roman*)}, ref = {(\roman*)}]
        \item \label{asm:At-1} $\A \in W^{1, \infty}(0, T; \W^{2, \infty}(\Omega_0))$;
        \item \label{asm:At-2} there exists a positive constant~$c_{\A}$ such that, for all~$t \in [0, T]$,
        \begin{equation*}
            \Norm{\At(y_1) - \At(y_2)}{\infty} \geq c_{\A} \Norm{y_1 - y_2}{\infty} \quad \forall y_1, \, y_2 \in \Omega_0,
        \end{equation*}
        where~$\Norm{\cdot}{\infty}$ denotes the~$\ell_{\infty}$ norm of~$d$-vectors.
    \end{enumerate}
\end{assumption}
\begin{remark}[Consequences of Assumption~\ref{asm:At}] \label{rem:assumptions-At}
    Assumption~\ref{asm:At} implies several fundamental properties:
    \begin{itemize}[itemsep = 0pt, topsep = 1.5pt]
        \item Assumption~\ref{asm:At}\ref{asm:At-1} and the continuous embedding~$W^{1, \infty}(0, T; \W^{2, \infty}(\Omega_0)) \hookrightarrow C^{0,1}([0, T]; \boldsymbol{C}^{1,1}(\overline{\Omega_0}))$ guarantees that, for each~$t \in [0,T]$, both~$\At$ and its spatial Jacobian matrix~$\Nablay \At$ are Lipschitz continuous on~$\overline{\Omega_0}$, with uniform Lipschitz constant for all~$t \in [0, T]$. Furthermore,~$\bw \in L^{\infty}(\boldsymbol{W}^{1,\infty},\QT)$.
        \item The uniform strong injectivity condition in Assumption~\ref{asm:At}\ref{asm:At-2} ensures that the inverse mapping~$\At^{-1} : \Omega(t) \to \Omega_0$ exists for all~$t \in [0,T]$ and is Lipschitz continuous, with uniform Lipschitz constant~$c_{\A}^{-1}$.
        \item The combination of the previous two points implies that there exists a positive constant~$\mu_{\star}$ such that~$\det \Nablay \At(\by) \geq \mu_{\star} > 0$ for all~$(\by, t) \in \Omega_0 \times [0, T]$.
        \item Since~$\overline{\Omega_0} \times [0, T]$ is a compact set, the regularity of~$\A$ implies that there exists a fixed, open, bounded domain~$\Omega \subset \mathbb{R}^d$ such that~$\Omega(t) \subset \Omega$ for all~$t \in [0, T]$.
    \end{itemize}
    For further details, we refer to the discussion in~\cite[\S2.1]{Bonito_Kyza_Nochetto:2013}.
    \eremk
\end{remark}
\subsection{The Piola transform}
To relate the Sobolev spaces in the reference domain~$\Omega_0$ to those in~$\Omega(t)$, while preserving divergence-free functions, we define the following (contravariant) Piola transform (see~\cite[Def.~2.7]{Rognes_Kirby_Logg:2009}).
\begin{definition}[Piola transform]
    For each~$t \in [0, T]$, let~$J_t(\by) := \Nablay \At(\by)$ be the spatial Jacobian of the ALE mapping. 
    The fixed-time Piola transformation~$\phi_t : \boldsymbol{L}^2(\Omega_0) \to \boldsymbol{L}^2(\Omega(t))$ is defined for any reference vector field~$\widehat{\boldsymbol{\varphi}} \in \boldsymbol{L}^2(\Omega_0)$ as
    \begin{equation*}
        (\phi_t \widehat{\boldsymbol{\varphi}})(\bx) := \frac{1}{\det J_t(\by)} J_t(\by) \, \widehat{\boldsymbol{\varphi}}(\by) \, , \quad \text{with } \bx = \At(\by).
    \end{equation*}
    The inverse fixed-time Piola transformation~$\phi_t^{-1} : \boldsymbol{L}^2(\Omega(t)) \to \boldsymbol{L}^2(\Omega_0)$ maps a vector field~$\boldsymbol{\varphi} \in \boldsymbol{L}^2(\Omega(t))$ back to the reference domain as:
    \begin{equation*}
        (\phi_t^{-1} \boldsymbol{\varphi})(\by) := \det J_t(\by) J_t^{-1}(\by) \, \boldsymbol{\varphi}(\bx), \quad \text{with } \bx = \At(\by).
    \end{equation*}
    Pointwise extension in time defines the global space--time Piola operator~$\phi: H^1(0,T; \boldsymbol{L}^2(\Omega_0)) \to H^1(\boldsymbol{L}^2; \QT)$ such that~$(\phi \widehat{\boldsymbol{\varphi}})(\bx, t) := [ \phi_t (\widehat{\boldsymbol{\varphi}}(\cdot, t) )](\bx)$.
    Accordingly, the global inverse space--time Piola operator~$\phi^{-1}: H^1(\boldsymbol{L}^2; \QT) \to H^1(0,T; \boldsymbol{L}^2(\Omega_0))$ is defined as~$(\phi^{-1} \boldsymbol{\varphi})(\by, t) := [ \phi_t^{-1} (\boldsymbol{\varphi}(\cdot, t) )](\by)$. Such definitions can be generalized in a natural way to functions that are only piecewise~$H^1$ in time.
\end{definition}
The key result related to this Piola transform is given in the next Lemma; see~\cite[Ex.~4.6]{Rognes_Kirby_Logg:2009} and~\cite[Lemma~2.8]{Djurdjevac_Graser_Herbert:2023}).
\begin{lemma}[Divergence preservation]\label{lemma:div-preservation}
    For each~$t \in [0, T]$ and all~$\hv \in \H(\divy; \Omega_0)$, the following identity holds:
    \begin{equation*}
        \divx (\phi_t \hv) (\bx) = \frac{1}{\det J_t(\by)} \divy \hv (\by)  \quad \text{for a.e. } \bx = \At (\by) \in \Omega(t).
    \end{equation*}
\end{lemma}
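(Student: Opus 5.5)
We prove the identity first for smooth fields and then pass to $\H(\divy;\Omega_0)$ by density, using the weak characterization of the divergence. Fix $t$ and write $J := J_t$, $\mathcal{J} := \det J$, and $\v := \phi_t \hv$. The plan is to test $\v$ against an arbitrary $\psi \in C_c^\infty(\Omega(t))$ and pull everything back to $\Omega_0$ with the change of variables $\bx = \At(\by)$, $\dx = \mathcal{J}\dy$. By Remark~\ref{rem:assumptions-At}, this change of variables is legitimate: $\At$ is bi-Lipschitz, $\mathcal{J} \geq \mu_\star > 0$, and $J \in \boldsymbol{W}^{1,\infty}$. The chain rule gives $(\nablax \psi)\circ\At = J^{-T}\nablay \widehat\psi$ with $\widehat\psi := \psi\circ\At \in W^{1,\infty}_0(\Omega_0)$. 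We then compute
\begin{equation*}
\int_{\Omega(t)} \v\cdot\nablax\psi \dx = \int_{\Omega_0} \frac{1}{\mathcal{J}} J\hv \cdot J^{-T}\nablay\widehat\psi \, \mathcal{J} \dy = \int_{\Omega_0} \hv\cdot\nablay\widehat\psi \dy = -\int_{\Omega_0} \divy\hv \, \widehat\psi \dy .
\end{equation*}
The last equality follows from the definition of the weak divergence of $\hv \in \H(\divy;\Omega_0)$, tested against a Lipschitz function vanishing on the boundary; it extends from $C_c^\infty$ to such test functions by density in $H^1_0$. Changing variables back, the right-hand side equals
\begin{equation*}
-\int_{\Omega(t)} \bigl(\mathcal{J}^{-1}\divy\hv\bigr)\circ\At^{-1}\, \psi \dx .
\end{equation*}
This shows that $\divx\v \in L^2(\Omega(t))$ and that it equals $\mathcal{J}^{-1}\divy\hv$ a.e. after composition, which is exactly the claim.

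The main subtlety lies in the regularity bookkeeping rather than the algebra. First, $\widehat\psi$ is only Lipschitz, not smooth, so we must justify it as a test function; this holds because $W^{1,\infty}_0 \subset H^1_0$. Second, both the chain rule and the change-of-variables formula must be valid for bi-Lipschitz maps; they are, by Rademacher's theorem and the area formula. Third, $\mathcal{J}^{-1}\divy\hv \circ \At^{-1}$ must lie in $L^2$, which follows from the lower bound $\mathcal{J} \geq \mu_\star$.

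Notably, the Piola identity $\divy(\mathcal{J}J^{-1}) = 0$ (row-wise), which a pointwise argument for smooth fields would require, is never needed explicitly in this weak argument: it is encoded in the cancellation $J^{T}J^{-T} = \Id$ together with the Jacobian factor. Hence no density argument in $\hv$ is actually necessary.
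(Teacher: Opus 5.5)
Your proof is correct and complete. The paper does not prove this lemma; it only points to Rognes--Kirby--Logg and Djurdjevac--Gr\"aser--Herbert. Your argument is therefore a self-contained proof rather than a reconstruction of one in the text.

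What you give is the standard duality argument. The factor $(\det J_t)^{-1}$ in $\phi_t$ cancels the Jacobian of the change of variables, and $J_t^{\top}J_t^{-\top}=\mathbb{I}_d$ absorbs the chain-rule factor of $\nablax\psi$. The other natural route is pointwise. For smooth fields one expands $\divy\big((\det J_t)J_t^{-1}(\v\circ\At)\big)$, with $\v=\phi_t\hv$, by the product and chain rules. The terms carrying second derivatives of $\At$ then drop out by the Piola identity: the rows of the cofactor matrix $(\det J_t)J_t^{-\top}$ are divergence-free. One then extends to $\H(\divy;\Omega_0)$ by density and continuity of both sides. That route gives an explicit pointwise computation, but it needs second derivatives of $\At$ (available here, since $\At\in\boldsymbol{W}^{2,\infty}(\Omega_0)$) plus a density step. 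Yours needs only that $\At$ be bi-Lipschitz with $\det J_t$ bounded below, and it treats every $\hv\in\H(\divy;\Omega_0)$ at once. It also shows along the way that $\phi_t$ maps $\H(\divy;\Omega_0)$ into $\H(\divx;\Omega(t))$.

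Three minor points.
\begin{itemize}
\item Your opening sentence announces a density argument in $\hv$ that you never use, and later rightly call unnecessary; delete it.
\item The appeal to Rademacher's theorem and the area formula is more than you need. $\At$ is a $C^1$ homeomorphism of $\Omega_0$ onto $\Omega(t)$, so $\psi\circ\At$ is a $C^1$ function with compact support in $\Omega_0$, and the classical chain rule and change of variables suffice.
\item In your closing remark, the Piola identity should say that the \emph{columns} of $(\det J_t)J_t^{-1}$ are divergence-free (equivalently, the rows of the cofactor matrix), not its rows.
\end{itemize}
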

In addition, the following identity will be used in the convergence analysis.
\begin{lemma}[Time derivative of the Piola transform]\label{lemma:Dt-phi}
    For all~$\hv \in H^1(0,T; \L^2(\Omega_0))$, the space--time Piola transform~$\phi \hv$ satisfies the following identity:
    \begin{equation*}
        \Dt (\phi \widehat{\v}) = \phi (\partial_t \widehat{\v}) + (\Nablax \bw - \divx \bw \mathbb{I}_d) \phi \widehat{\v} \, \qquad \text{in $L^2(\boldsymbol{L}^2; \QT)$},
    \end{equation*}
    where~$\mathbb{I}_d$ is the $d \times d$ identity matrix.
\end{lemma}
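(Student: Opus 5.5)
The plan is to work pointwise along the ALE trajectories. Fix $\by \in \Omega_0$ and set $\bx = \At(\by)$. By the definition of $\Dt$ and of $\phi$,
\begin{equation*}
\Dt(\phi\hv)(\bx,t) = \ddt\Big[\tfrac{1}{\det J_t(\by)}\,J_t(\by)\,\hv(\by,t)\Big].
\end{equation*}
So the identity reduces to differentiating a product of three $t$-dependent factors at fixed $\by$. First I will argue for smooth $\hv$ (say $C^1([0,T];\L^2(\Omega_0))$, or even smooth in both variables). Then I will extend by density to $H^1(0,T;\L^2(\Omega_0))$. The density step uses that every coefficient involved ($J_t$, $J_t^{-1}$, $\det J_t^{\pm1}$, $\Nablax\bw$) is in $L^\infty$ uniformly in $t$, by Assumption~\ref{asm:At} and Remark~\ref{rem:assumptions-At}. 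Hence both sides depend continuously on $\hv$ in the relevant $L^2$-type norms, and $\phi$ is bounded between the spaces in the definition.

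The key steps, in order, are these.
\begin{enumerate}
\item Compute $\dpt J_t(\by)$. Since $\A \in W^{1,\infty}(0,T;\W^{2,\infty}(\Omega_0))$, time and space derivatives commute a.e. From \eqref{eq:w-x-identity}, $\dpt J_t(\by) = \Nablay\big(\bw(\At(\by),t)\big) = (\Nablax\bw)(\bx,t)\,J_t(\by)$ by the chain rule.
\item Apply Jacobi's formula: $\dpt \det J_t = \det J_t\,\tr\big(J_t^{-1}\dpt J_t\big) = \det J_t\,\tr\big(J_t^{-1}(\Nablax\bw)J_t\big) = \det J_t\,\divx\bw$. Consequently $\dpt(\det J_t)^{-1} = -(\det J_t)^{-1}\divx\bw$.
\item Expand the product rule. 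This gives
\begin{equation*}
\Dt(\phi\hv) = \tfrac{1}{\det J_t}J_t\,\dpt\hv + \tfrac{1}{\det J_t}(\Nablax\bw)J_t\hv - (\divx\bw)\tfrac{1}{\det J_t}J_t\hv .
\end{equation*}
The terms are, respectively, $\phi(\dpt\hv)$, $(\Nablax\bw)\,\phi\hv$ and $-(\divx\bw)\,\phi\hv$, which is the claimed formula.
\end{enumerate}

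The main obstacle is justifying these manipulations at the stated regularity rather than the algebra itself. Two points need care. The first is the commutation $\dpt\Nablay\A = \Nablay\dpt\A$ and the chain rule for $\bw\circ\A$ with only Lipschitz/$W^{1,\infty}$ data. I would handle it by noting that $\bw\circ\A = \dpt\A \in L^\infty(0,T;\W^{2,\infty})$, and by using the composition rule for $W^{1,\infty}$ functions with bi-Lipschitz maps, which is valid by Assumption~\ref{asm:At}\ref{asm:At-2}. The second is interpreting $\Dt$ for nonsmooth $\phi\hv$. I would define it through the pull-back, $\Dt\boldsymbol{\varphi}\circ\A = \dpt(\boldsymbol{\varphi}\circ\A)$, where the derivative is a weak time derivative. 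Then the product rule in $H^1(0,T;L^2)$ with $W^{1,\infty}$-in-time bounded multipliers $J_t/\det J_t$ gives the result directly, with membership in $L^2(\L^2;\QT)$ following from the $L^\infty$ bounds on the coefficients and on $\det J_t^{\pm1}$.
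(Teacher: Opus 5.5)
Your proposal is correct and follows essentially the same route as the paper: differentiate $\frac{1}{\det J_t}J_t\hv$ at fixed $\by$ by the Leibniz rule, use $\dpt J_t = (\Nablax \bw) J_t$ (from~\eqref{eq:w-x-identity} and the chain rule), and use Jacobi's formula to get $\dpt(\det J_t)^{-1} = -(\det J_t)^{-1}\divx \bw$. Your extra remarks on commuting the derivatives, the bi-Lipschitz chain rule, and interpreting $\Dt$ via the pull-back (or by density) spell out regularity points that the paper's pointwise computation leaves implicit.
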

\begin{proof}
    Using the definition of the Piola transform~$\phi_t$, the fact that the material derivative~$\Dt$ is the partial derivative with respect to~$t$ for fixed~$\by$, and the Leibniz rule, for~$\bx = \At(\by)$, we can write
    \begin{align}
        \nonumber
        [\Dt (\phi \widehat{\v})](\bx, t) & = \dpt \Big(\frac{1}{\det J_t(\by)} J_t(\by) \widehat{\v} (\by, t) \Big) \\
        \label{eq:aux-identity-Dt-phi_t}
        & = \dpt \Big(\frac{1}{\det J_t(\by)}\Big) J_t(\by) \widehat{\v}(\by, t) + \frac{1}{\det J_t(\by)}  \partial_t (J_t(\by)) \widehat{\v}(\by, t) + \frac{1}{\det J_t(\by)} J_t(\by) \dpt \widehat{\v}(\by, t).
    \end{align}
    Elementary calculus operations, identity~\eqref{eq:w-x-identity}, and the chain rule give
    \begin{equation}\label{eq:dpt-Jt}
        \partial_t (J_t(\by))  =  \dpt (\Nablay A_t(\by)) = \Nablay (\dpt \bx) = \Nablay \bw = (\Nablax \bw ) J_t(\by).
    \end{equation}
    Moreover, using the Jacobi formula (see~\cite[Thm. 8.1 in Ch. 8, Part III]{Magnus_Neudecker:2019}), identity~\eqref{eq:dpt-Jt}, and the commutativity of the matrix product under the trace operator, we obtain
    \begin{equation*}
        \dpt (\det J_t(\by)) = \det J_t(\by) \tr (J_t(\by)^{-1} \dpt J_t(\by))  = \det J_t(\by) \tr(J_t(\by)^{-1} \Nablax \bw J_t(\by)) = \det J_t(\by) \divx \bw,
    \end{equation*}
    which implies
    \begin{equation}\label{eq:dpt-inv-det-Jt}
        \dpt \Big(\frac{1}{\det J_t(\by)} \Big) = - \frac{1}{(\det J_t(\by))^2} \dpt (\det J_t(\by)) = -\frac{1}{\det J_t(\by)} \divx \bw. 
    \end{equation}
    Inserting identities~\eqref{eq:dpt-Jt} and~\eqref{eq:dpt-inv-det-Jt} into~\eqref{eq:aux-identity-Dt-phi_t}, we get
    \begin{align*}
        [\Dt (\phi \widehat{\v})](\bx, t) &= - \divx \bw(\bx, t) \left( \frac{1}{\det J_t(\by)} J_t(\by) \widehat{\v}(\by, t) \right) + \Nablax \bw(\bx, t) \left( \frac{1}{\det J_t(\by)} J_t(\by) \widehat{\v}(\by, t) \right) \\
        & \quad + \frac{1}{\det J_t(\by)} J_t(\by) \dpt \widehat{\v}(\by, t) \\
        &= [\phi (\partial_t \widehat{\v})](\bx, t) - \divx \bw(\bx, t) (\phi \widehat{\v})(\bx, t) + \Nablax \bw(\bx, t) (\phi \widehat{\v})(\bx, t) \qquad \text{ for a.e.~$(\bx, t) \in \QT$,}
    \end{align*}
    which yields the desired global identity in $L^2(\boldsymbol{L}^2; \QT)$.
\end{proof}
We end this section with the following key notation remark.
\begin{remark}[Hat and tilde notation]\label{rem:notation_conventions}
    Any scalar-valued function~$\varphi \in L^1(\QT)$  can be uniquely associated with its ALE pull-back map~$\widetilde{\varphi} \in L^1(\Omega_0 \times [0,T])$ via direct composition as~$\widetilde{\varphi}(\by,t) := \varphi(\bx,t)$ for a.e.~$\bx = \At(\by)$.
    Moreover, the ALE push-forward map of a reference function~$\widetilde{\varphi}$ is given by~$\varphi(\bx,t) := \widetilde{\varphi}(\by,t)$ for~a.e.~$\bx = \At (\by)$. 
    This tilde notation can be extended naturally to vector-valued functions.
    The Piola map provides an alternative way to relate vector-valued functions defined in the reference domain~$\Omega_0$ to those defined in~$\Omega(t)$. Throughout the remainder of this work, we adopt a systematic notation to distinguish these types of push-forwards and pull-backs.
    \begin{itemize}
        \item If a function is originally defined in~$\QT$, its tilded (resp. hatted) counterpart denotes its ALE (resp. Piola) pull-back. Analogously, if a tilded (resp. hatted) function is first introduced in the reference domain, the plain symbol without tilde (resp. hat) represents its ALE (resp. Piola) push-forward defined in~$\QT$.
        \item For scalar-valued functions, we systematically employ the tilde~$\widetilde{\varphi}$ notation. For vector-valued functions, both notations can be used depending on the context. The tilde~$\widetilde{\boldsymbol{\varphi}}$ notation denotes the standard pull-back via direct composition, and it preserves pointwise values but not normal fluxes and divergence. 
        Conversely, the hat function~$\widehat{\boldsymbol{\varphi}}$ preserves divergence mappings (see Lemma~\ref{lemma:div-preservation}), which is fundamental for the construction of our discrete velocity spaces. 
        \eremk
    \end{itemize}
\end{remark}
\section{Description of the method}\label{sec:3}
We first describe the mesh notation (Section~\ref{sec:mesh-notation}); we then present the discrete spaces for the velocity and the pressure (Sections~\ref{sec:discrete-spaces} and~\ref{sec:kernel-space}), and finally introduce the proposed space--time method (Section~\ref{sec:DG-method}).
\subsection{Mesh notation}\label{sec:mesh-notation}
Let~$\{\ThO\}_{h > 0}$ be a family of shape-regular, conforming simplicial partitions of the initial domain~$\Omega_0$, where the parameter~$h$ stands for the maximum meshsize of~$\ThO$. The set of facets of~$\ThO$ is denoted by~$\Fh = \Fho \cup \Fhb$, where~$\Fho$ and~$\Fhb$ are the sets of internal and boundary facets, respectively. 
For each~$F \in \Fh$, we set~$\hF := \diam(F)$, and denote by~$\nF$ one of the two unit normal vectors to~$F$, which is fixed once and for all, with the convention that~$\nF$ points outward~$\Omega_0$ if~$F \in \Fhb$. In addition, for~$t \in [0, T]$, we denote by~$\Ft := \At(F)$ the moving facets, and by~$\nFt$ the corresponding unit normal to~$\Ft$ (which does not coincide with~$\At(\nF)$ in general). 
Analogously, for each~$K \in \ThO$ and~$t \in [0, T]$, we define the moving elements $\Kt := \At(K)$, which form the spatial mesh~$\Tht$ of the domain $\Omega(t)$. Associated with this partition, we introduce the following broken Sobolev space:
$$
    H^1(\Tht) := \big\{ q \in L^2(\Omega(t)) \ : \ q|_{\Kt} \in H^1(\Kt) \quad \forall K \in \ThO \big\}.
$$
For~$t \in [0, T]$, we consider the following average and jump operators: for any facet~$\Ft := \At(F)$, with $F$ shared by two elements~$K^+$ and~$K^-$ in~$\ThO$ and~$\nF$ pointing from~$K^+$ to~$K^-$, we define
$$
    \av{q}_{\Ft} := \frac12 \big(q{|_{\Kt^+}} + q{|_{\Kt^-}}\big) \quad \text{ and } \quad \jump{q}_{\Ft} := q{|_{\Kt^+}} - q{|_{\Kt^-}} \qquad \forall q \in H^1(\Tht). 
$$
Analogously, for any boundary facet~$F \in \Fhb$, we set
$$ 
    \av{q}_{\Ft} :=  q{|_{\Ft}} \quad \text{ and } \quad \jump{q}_{\Ft} := q{|_{\Ft}}.
$$
All definitions here above for scalar fields can be seamlessly extended to vector-valued fields.

We consider a partition~$\Tt$ of the time interval~$(0, T)$, determined by the nodes
\begin{equation*}
    0 =: t_0 < t_1 < \ldots < t_N := T.
\end{equation*}
For~$n = 1, \ldots, N$, we define the time interval~$\In := (\tnmo, \tn)$, the time step~$\tau_n := \tn - \tnmo$, and the time-slab
\begin{equation*}
    \Qn := \big\{(\bx, t) \in \R^d \times \R^+ \ : \ t \in \In, \text{ and } \bx = \At(\by), \text{ for some~$\by \in \Omega_0$} \big\}.
\end{equation*}
Similarly, for all~$K \in \ThO$, we define~$\calK_n := \{(\bx, t) \in \R^d \times \R^+ \ : \ t \in \In, \ \text{and } \bx = \At(\by), \ \text{for some~$\by \in K$}\}$.

Furthermore, we define the time-jump $\jump{\cdot}_n$ at $t_n$ for functions~$\v$ with~$\widetilde{\v} \in H^1(\In; \L^2(\Omega_0))$ as follows:
\begin{equation}\label{eq:time-jump}
    \jump{\v}_n (\bx) := \v(\bx, \tn^+) - \v(\bx, \tn^-) \quad \forall \bx = \A_{\tn}(\by) \in \Omega(\tn), 
\end{equation}
where
\begin{equation*}
    \v(\bx, \tn^+) := \lim_{\varepsilon \to 0^+} \widetilde{\v}(\by, \tn + \varepsilon) \quad \text{and} \quad \v(\bx, \tn^-) := \lim_{\varepsilon \to 0^+} \widetilde{\v}(\by, \tn - \varepsilon).
\end{equation*}
\subsection{Discrete spaces}\label{sec:discrete-spaces}
Given a degree of approximation~$k \in \N$ with~$k \geq 1$, we denote by~$\Vhk$ the Brezzi--Douglas--Marini (BDM) space of degree~$k$ defined on~$\ThO$ (see~\cite[\S2.3]{Boffi_Brezzi_Fortin:2013}), with the additional condition of having vanishing normal component on $\partial\Omega_0$. We also denote by~$\Mhk$ the space of piecewise polynomials of degree~$k-1$ with zero mean on~$\Omega_0$.

Given also a degree of approximation in time~$\ell \in \N$, for~$n = 1, \ldots, N$, we denote by~$\Pp{\ell}{\In}$ the space of polynomials of degree at most~$\ell$ defined on~$\In$, and by~$\{\psi_j^{(n)}\}_{j = 0}^{\ell}$ a generic basis for~$\Pp{\ell}{\In}$, where specific choices will be made when convenient.

Furthermore, we define the following discrete spaces on each time-slab~$\Qn$: 
\begin{subequations}
    \begin{alignat}{3}
        \Vht^{(n)} & := \bigg\{\vht : \Qn \to \R^d \ :  \  \vht(\bx, t) = \sum_{j = 0}^{\ell} (\phi_t \ba_{h, j})(\bx) \psi_j^{(n)}(t),\  \text{with}\ \ba_{h,j} \in \Vhk,\ \ \text{for} \  j = 0, \ldots, \ell
        \bigg\}, \label{def:Vht} \\
        \Mht^{(n)} & := \Big\{\qht : \Qn \to \R  \ : \ 
        \qht(\bx, t) = \sum_{j = 0}^{\ell} (\widetilde{\beta}_{h,j} \circ \At^{-1}) (\bx) \psi_j^{(n)}(t), \ \text{with}\ \widetilde{\beta}_{h,j} \in \Mhk , \ \text{ for}\ j = 0, \ldots, \ell\Big\}. \label{def:Mht}
    \end{alignat}
\end{subequations}
We can then define the global discrete spaces as
\begin{subequations}
    \begin{alignat}{3}
        \label{eq:globals:1}
        \Vht & := \Big\{\vht : \QT \to \R^d \ : \ \vht{}{|_{\Qn}} \in \Vht^{(n)}, \text{ for~$n = 1, \ldots, N$} \Big\}, \\
        \label{eq:globals:2}
        \Mht & := \big\{ \qht : \QT \to \R \ : \ \qht {}{|_{\Qn}} \in \Mht^{(n)}, \text{ for~$n = 1, \ldots, N$} \big\}.
    \end{alignat}
\end{subequations}
\begin{remark}[The discrete space~$\Vht$ and its material derivative]
    The discrete space~$\Vht$ can be equivalently defined as~$\Vht : = \phi (\Vhk \otimes \Pp{\ell}{\Tt})$, i.e., any element~$\vht \in \Vht$ is the image under the space--time Piola transform~$\phi$ of a function in the tensor-product space~$\Vhk \otimes \Pp{\ell}{\Tt} $. In contrast to the spaces introduced in~\cite[\S3.1]{Bonito_Kyza_Nochetto:2013}, due to the presence of the (contravariant) Piola transform, the functions in~$\Vht$ are not polynomials in time of degree~$\ell$ along the trajectories defined by~$\At$.
    In particular, using the superscript~$[\ell]$ to highlight the polynomial degree in time, the material derivative~$\Dt \Vht^{[\ell]} \not\subset \Vht^{[\ell-1]}$, unless~$\A = \Id$. 
    This fact introduces significant challenges in the stability and convergence analyses in the subsequent sections.
    \eremk
\end{remark}
Due to Lemma~\ref{lemma:div-preservation} and the hat notation introduced in Remark~\ref{rem:notation_conventions}, for all~$\vht \in \Vht$ and~$n = 1, \ldots, N$, it holds
\begin{equation}\label{eq:div-hat-identity}
    \divx \vht (\bx, t) = \frac{1}{\det J_t(\by)} \divy \hv_{h\tau}(\by, t) \qquad \forall (\bx, t) \in \Qn, \ \text{with~$\bx = \At(\by)$.}
\end{equation}
\subsection{Discrete kernel space}\label{sec:kernel-space}
For $n=1,2,\ldots,N$, we also define the discrete velocity kernel space
$$
    \Zht^{(n)} := \bigg\{ \vht \in \Vht^{(n)} \ :  \ (\divx \vht, \qht)_{\Qn} = 0  \ \ \forall \qht \in \Mht^{(n)}  \bigg\} \, .
$$
The global space $\Zht$ is then defined naturally, as in~\eqref{eq:globals:1} and~\eqref{eq:globals:2}. 
\begin{lemma}[Compatibility of the discrete spaces]\label{lem:compat}
    The discrete velocity kernel space is equivalent to
    $$
        \Zht^{(n)} = \bigg\{ \vht \in \Vht^{(n)} \ :  \  \divx \vht = 0 \ \ \textrm{in~$\Qn$}
    \bigg\} \, .
    $$
\end{lemma}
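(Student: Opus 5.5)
The plan is to pull everything back to the tensor-product reference slab $\Omega_0 \times \In$. There, the Jacobian weight produced by the change of variables cancels exactly against the factor $1/\det J_t$ in the divergence identity~\eqref{eq:div-hat-identity}. This reduces the claim to the standard compatibility $\divy \Vhk \subseteq \Mhk$ of the BDM/discontinuous pair on the fixed mesh $\ThO$, tensorized with $\Pp{\ell}{\In}$.

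\emph{Trivial inclusion.} If $\divx \vht = 0$ in $\Qn$, then $(\divx \vht, \qht)_{\Qn} = 0$ for every $\qht \in \Mht^{(n)}$. Hence the right-hand set is contained in $\Zht^{(n)}$.

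\emph{Nontrivial inclusion.} Let $\vht \in \Zht^{(n)}$ with Piola pull-back $\hv_{h\tau} = \sum_{j} \ba_{h,j} \psi_j^{(n)}$, and let $\qht \in \Mht^{(n)}$ with ALE pull-back $\widetilde{q}_{h\tau} = \sum_j \widetilde{\beta}_{h,j} \psi_j^{(n)}$. The argument proceeds in three steps.
\begin{enumerate}
    \item \emph{Change of variables.} For each fixed $t \in \In$, substitute $\bx = \At(\by)$, so that $\dx = \det J_t(\by) \dy$ with $\det J_t \geq \mu_\star > 0$ (Remark~\ref{rem:assumptions-At}). Combined with~\eqref{eq:div-hat-identity}, this gives
    \begin{equation*}
        (\divx \vht, \qht)_{\Qn} = \int_{\In} \big(\divy \hv_{h\tau}(\cdot,t), \widetilde{q}_{h\tau}(\cdot,t)\big)_{\Omega_0} \dt .
    \end{equation*}
    The weight $\det J_t$ has disappeared, so the identity holds with no assumption on the geometry beyond Remark~\ref{rem:assumptions-At}.
    \item \emph{The divergence lies in the pressure space.} Since $\divy \hv_{h\tau} = \sum_j (\divy \ba_{h,j}) \psi_j^{(n)}$, it suffices to show $\divy \ba_{h,j} \in \Mhk$ for each $j$. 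Two properties of $\Vhk$ give this.
    \begin{itemize}
        \item The divergence of a BDM function of degree $k$ is a piecewise polynomial of degree $k-1$ on $\ThO$.
        \item By the divergence theorem and the vanishing normal trace on $\partial\Omega_0$, $\int_{\Omega_0} \divy \ba_{h,j} \dy = 0$.
    \end{itemize}
    Therefore $\widetilde{\beta}_{h,j} := \divy \ba_{h,j}$ is admissible, and the function $\qht := \sum_j (\widetilde{\beta}_{h,j} \circ \At^{-1}) \psi_j^{(n)}$ belongs to $\Mht^{(n)}$.
    \item \emph{Conclusion.} With this choice of test function, step 1 yields
    \begin{equation*}
        0 = \int_{\In} \Norm{\divy \hv_{h\tau}(\cdot,t)}{L^2(\Omega_0)}^2 \dt .
    \end{equation*}
    Hence $\divy \hv_{h\tau} = 0$ a.e. in $\Omega_0 \times \In$. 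Applying~\eqref{eq:div-hat-identity} once more gives $\divx \vht = 0$ in $\Qn$.
\end{enumerate}

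\emph{Expected obstacle.} The only delicate point is step 1: the test functions must be transported with the plain ALE pull-back while the velocities are transported with the Piola map. This is exactly the pairing under which the factor $1/\det J_t$ from Lemma~\ref{lemma:div-preservation} cancels the Jacobian of the change of variables. Had the pressure been Piola-like or weighted differently, a residual non-polynomial weight $\det J_t$ would remain, and the choice $\widetilde{q}_{h\tau} = \divy \hv_{h\tau}$ would no longer give a norm. I would therefore write out this cancellation carefully, including the a.e. validity and measurability of the substitution for each fixed $t$ (from the Lipschitz regularity in Remark~\ref{rem:assumptions-At}). The rest is routine.
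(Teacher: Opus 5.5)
Your proof is correct and follows the paper's argument: the change of variables $\bx=\At(\by)$ cancels the factor $1/\det J_t$ in~\eqref{eq:div-hat-identity}, and testing with $\widetilde{q}_{h\tau}=\divy\hv_{h\tau}\in\Mhk\otimes\Pp{\ell}{\In}$ forces $\divy\hv_{h\tau}=0$. The only differences are that you verify $\divy\Vhk\subseteq\Mhk$ directly (the degree count, plus zero mean from the vanishing normal trace) instead of citing the BDM commutativity property, and that you state the trivial inclusion explicitly. One small correction to your closing remark: the exact cancellation of $\det J_t$ is convenient but not essential here. Any a.e.\ positive weight would still turn the pairing with $\widetilde{q}_{h\tau}=\divy\hv_{h\tau}$ into a weighted $L^2$ norm, and its vanishing gives the same conclusion.
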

\begin{proof}
    Let~$n \in \{1, \ldots, N\}$ and~$\vht^{(n)} \in \Vht^{(n)}$. Recalling the hat and tilde notation introduced in Remark~\ref{rem:notation_conventions}, and then applying  identity~\eqref{eq:div-hat-identity} and a change of variables of the ensuing integral, for all~$\qht^{(n)} \in \Mht^{(n)}$, we have
    \begin{alignat}{3}
        \nonumber
        \int_{\In} \big(\divx \vht^{(n)}, \qht^{(n)}\big)_{\Omega(t)} \dt & = \int_{\In} \int_{\Omega(t)} \Big(\frac{1}{\det J_t} (\divy \hv_{h\tau}) \Big) \big(\At^{-1}(\bx) , t \big) \widetilde{q}_{h\tau} \big(\At^{-1}(\bx), t\big) \dx \dt \\
        \label{eq:div-t-0}
        & = \int_{\In} \int_{\Omega_0} \divy \hv_{h\tau} (\by, t) \widetilde{q}_{h\tau} (\by, t) \dy \dt. 
    \end{alignat}
    Assume now that $\vht^{(n)} \in \Zht^{(n)}$ satisfies~$\int_{\In} \big(\divx \vht, \qht^{(n)}\big)_{\Omega(t)} \dt=0$ for all $\qht^{(n)}\in\Mht^{(n)}$.
    By the well-known property of BDM elements in Lemma~\ref{lemma:commutativity}: $\divy (\Vhk \otimes \Pp{\ell}{\In}) = \Mhk \otimes \Pp{\ell}{\In}$.
    Therefore, we can find~$\widetilde{q}_{h\tau} \in \Mhk \otimes \Pp{\ell}{\In}$ such that~$\widetilde{q}_{h\tau}(\by, t) = \divy \hv_{h\tau}(\by, t)$. Choosing the corresponding~$\qht \in \Mht$ in identity~\eqref{eq:div-t-0} gives
    \begin{equation*}
        \Norm{\divy \hv_{h\tau}}{L^2(\In; L^2(\Omega_0))}^2 = 0,
    \end{equation*}
    which implies that~$\divy \hv_{h\tau} = 0$ in~$\Omega_0 \times \In$ and, together with 
    identity~\eqref{eq:div-hat-identity}, gives~$\divx \vht = 0$ in~$\Qn$. 
\end{proof}
\begin{remark}[Exact geometric representation]\label{rem:geo-exact}
    The proposed method represents exactly the geometry of the moving domain, which is allowed to be curved and may require high-order quadratures. 
    The same construction can be adapted to an isoparametric approach, i.e., approximating the map~$\At$ by a piecewise polynomial.
    \eremk
\end{remark}
\begin{remark}[Initial domain~$\Omega_0$]\label{rem:initial}
    We can obtain a simplicial partition~$\ThO$ of~$\Omega_0$ because $\Omega_0$ was assumed to be a polytopal domain.
    This simplification is introduced only for ease of exposition. A more general assumption could be that $\Omega_0$ is a Lipschitz domain described as the image of a map ${\mathcal G}: \widetilde{\Omega} \rightarrow \Omega_0$, for a polytopal parametric domain~$\widetilde{\Omega}$, the map ${\mathcal G}$ being piecewise regular with piecewise-regular inverse. In such a case, given a simplicial mesh on the parametric domain $\widetilde{\Omega}$, the discrete spaces $\Vhk$ and~$\Mhk$ defined in~$\Omega_0$ would be simply replaced by their (${\mathcal G}$-based) Piola-mapped counterparts, in the spirit of~\cite{Bertrand_Starke:2016}.
    \eremk
\end{remark}

\subsection{Space--time DG method}\label{sec:DG-method}
We define the following forms:
\begin{alignat}{3}
    \nonumber
    \aht(\uht, \vht) & := \sum_{n = 1}^{N} \aht^{(n)} (\uht, \vht) 
    := \sum_{n = 1}^{N} \int_{\tnmo}^{\tn} \ah^t(\uht, \vht) \dt  \\
    \nonumber
    & := \sum_{n = 1}^{N} \int_{\tnmo}^{\tn} \bigg[\sum_{\Kh \in \ThO} (\Nablax \uht, \Nablax \vht)_{\Kt} - \sum_{F \in \Fh} (\av{\Nablax \uht}_{\Ft} \nFt, \jump{\vht}_{\Ft})_{\Ft} \\
    \label{eq:aht}
    & \qquad \qquad \quad  - \sum_{F \in \Fh} (\jump{\uht}_{\Ft}, \av{\Nablax \vht}_{\Ft} \nFt)_{\Ft} + \sum_{F \in \Fh} (\sigma \hF^{-1} \jump{\uht}_{\Ft}, \jump{\vht}_{\Ft})_{\Ft}  \bigg] \dt, \\
    \nonumber
    \cht(\bw; \uht, \vht) & := \sum_{n = 1}^N \cht^{(n)} (\bw; \uht, \vht) 
    := \sum_{n = 1}^{N} \int_{\tnmo}^{\tn} c_h^t(\bw;\uht, \vht) \dt  \\
    \label{eq:cht}
    & := \sum_{n = 1}^N \int_{\In} \bigg[\sum_{K \in \ThO} \big((\Nablax  \uht) \bw, \vht \big)_{\Kt} - \sum_{F \in \Fho} \big((\bw \cdot \nFt) \jump{\uht}_{\Ft}, \av{\vht}_{\Ft} \big)_{\Ft} \bigg] \dt,
\end{alignat}
where the stability parameter~$\sigma > 0$ will be chosen ``large enough", as usual.

Furthermore, for all~$t \in [0, T]$, we define
$$
    (v,w)_{\Omega_h(t)} := \sum_{K \in \ThO} (v,w)_{\Kt} \qquad \forall v,w: \Omega(t) \rightarrow {\mathbb R} \, , \textrm{ piecewise regular} ,
$$
with the obvious analog for~$d$-vector-valued functions.

The proposed space--time DG formulation of~\eqref{eq:model-problem-ALE} reads: find~$(\uht, \pht) \in \Vht \times \Mht$ such that
\begin{subequations}\label{method}
    \begin{alignat}{3}
        \nonumber 
        \sum_{n = 1}^{N} \int_{\In} (\Dt \uht, \vht)_{\Omega_h(t)} \dt  + \sum_{n = 1}^{N - 1} \big(\jump{\uht}_n, & \vht(\cdot, \tn^+) \big)_{\Omega(\tn)}  + (\uht(\cdot,0), \vht(\cdot, 0))_{\Omega_0} - \cht(\bw; \uht, \vht)\\
        + \nu \aht(\uht, \vht) + (\pht, \divx \vht)_{\QT} & = (\f, \vht)_{\QT} + (\u_0, \vht(\cdot, 0))_{\Omega_0} \qquad  \forall \vht \in \Vht, \\
        (\divx \uht, \qht)_{\QT} & = 0 \hspace{5.1cm} \forall \qht \in \Mht .
    \end{alignat}
\end{subequations}
By Lemma~\ref{lem:compat}, it is simple to check that the discrete problem~\eqref{method} can be written equivalently as:
find~$\uht \in \Zht$ such that
\begin{alignat}{3}
    \nonumber
    \Bht(\uht, \zht) & := \sum_{n = 1}^{N} \int_{\In} (\Dt \uht, \zht)_{\Omega_h(t)} \dt  + \sum_{n = 1}^{N - 1} \big(\jump{\uht}_n, \zht(\cdot, \tn^+) \big)_{\Omega(\tn)}  + (\uht(\cdot,0), \zht(\cdot, 0))_{\Omega_0} \\
    \label{method:Z}
    & \quad - \cht(\bw; \uht, \zht)
    + \nu \aht(\uht, \zht) = (\f, \zht)_{\QT} + (\u_0, \zht(\cdot, 0))_{\Omega_0} \quad  \forall \zht \in \Zht \, . 
\end{alignat}
\section{Stability analysis}\label{sec:4}
This section is devoted to establishing the well-posedness of the space--time method~\eqref{method}.

Henceforth, we will use~$a \lesssim b$ to denote the existence of a positive constant~$C$ independent of the mesh size~$h$, the time steps~$\tau_n$, and the viscosity coefficient~$\nu$ such that~$a \le C b$. In particular, we trace explicitly the dependence of all constants on~$\nu$. We will also use~$a \simeq b$ to indicate that~$a \lesssim b$ and~$b \lesssim a$.

For all~$t \in [0, T]$ and all piecewise-regular (with respect to the mapped mesh) functions on $\Omega(t)$, we define
\begin{equation*}
    \Norm{\vh}{1,h,t}^2 := \sum_{\Kh \in \ThO} \Norm{\nablax \vh}{\L^2(\Kt)}^2 + \sum_{F \in \Fh} \hF^{-1} \Norm{\jump{\vh}_{\Ft}}{\L^2(\Ft)}^2.
\end{equation*}
In the next lemma, we collect several bounds related to the Piola transformation $\phi_t$; the proof and the specific constants involved are provided in Appendix~\ref{app:bounds}.
\begin{lemma}\label{lemma:equivalence}
    Let Assumption~\ref{asm:At} on the map~$\A$ hold.
    Let $\hv \in \boldsymbol {H}^2(\ThO)$  and $\widetilde{q} \in L^2(\Omega_0)$.
    For any $K \in \ThO$, any $F \in \Fh$, and any $t \in [0,T]$, the following inequalities hold:
    \begin{subequations}
        \begin{alignat}{3}
            \label{eq:trace.NEW}
            \Norm{\hv}{\L^2(\partial K)}^2 &\simeq \Norm{\phi_t\hv}{\L^2(\partial \Kt)}^2, \\
            \label{eq:trace-grad.NEW}
            \Norm{\Nablax (\phi_t \hv)}{\L^2(\partial \Kt)}^2  & \lesssim \hK^{-1} \Norm{\widehat{\v}}{\L^2(K)}^2 + (\hK + \hK^{-1})  \Norm{\Nablay \widehat{\v}}{\L^2(K)}^2 + \hK \Norm{\Nablay^2 \widehat{\v}}{\L^2(K)}^2, \\ 
            \label{eq:equiv-vh.1}
            \Norm{\Nablax (\phi_t \hv)}{\L^2(\Kt)}^2 & \lesssim \Norm{\hv}{\L^2(K)}^2 + \Norm{\Nablay \hv}{\L^2(K)}^2, \\ 
            \label{eq:equiv-vh.2}
            \Norm{\Nablay \hv }{\L^2(K)}^2 & \lesssim \Norm{\phi_t \hv}{\L^2(\Kt)}^2 + \Norm{\Nablax (\phi_t \hv)}{\L^2(\Kt)}^2, \\ 
            \label{eq:equiv-vh-2}
            \Norm{\jump{\hv}_{F}}{\L^2(F)}^2 &\simeq \Norm{\jump{\phi_t \hv}_{\Ft}}{\L^2(\Ft)}^2, \\
            \label{eq:equiv-vh-3}
            \Norm{\hv}{\L^2(\Omega_0)}^2 &\simeq \Norm{\phi_t \hv}{\L^2(\Omega(t))}^2, \\
            \label{eq:equiv-qht}
            \Norm{\widetilde{q} \circ \At^{-1} }{L^2(\Omega(t))}^2  & \simeq \Norm{\widetilde{q}}{L^2(\Omega_0)}^2.
        \end{alignat}
    \end{subequations}
    Moreover, if $\hv \in \Vhk$, then the following discrete trace estimates hold:
    \begin{subequations}
        \begin{alignat}{3}
            \label{eq:trace.NEW.NEW}
            \Norm{\phi_t\hv}{\L^2(\partial \Kt)}^2   &\lesssim \hK^{-1}   \Norm{\phi_t\hv}{\L^2(\Kt)}^2, \\
            \label{eq:trace-grad.NEW.NEW}
            \Norm{\Nablax (\phi_t\hv)}{\L^2(\partial \Kt)}^2  & \lesssim \hK^{-1}  \Norm{\Nablax (\phi_t\hv)}{\L^2(\Kt)}^2.
        \end{alignat}
    \end{subequations}
    All the hidden constants above depend only on $d$, $k$, the mesh regularity parameter of $\ThO$, and the regularity of the map~$\At$.
\end{lemma}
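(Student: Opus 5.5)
The plan is to reduce every estimate to pointwise bounds on the Jacobian $J_t$, its inverse and determinant, and then apply a change of variables $\bx = \At(\by)$. By Assumption~\ref{asm:At} and Remark~\ref{rem:assumptions-At}, uniformly in $t\in[0,T]$ and $\by\in\overline{\Omega_0}$, we have
\[
\|J_t\|_{\infty}+\|J_t^{-1}\|_{\infty}+\|\Nablay J_t\|_{\infty}\lesssim 1,
\qquad
\mu_\star\le\det J_t\lesssim 1,
\]
and hence also $\|\Nablay(\det J_t)^{-1}\|_\infty\lesssim1$, since $\At\in \W^{2,\infty}$. We write $\phi_t\hv\circ\At=(\det J_t)^{-1}J_t\hv=:M_t\hv$, where $M_t\in W^{1,\infty}$, $M_t^{-1}\in W^{1,\infty}$, and all their norms are uniform in $t$.

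\emph{Volume estimates.} For \eqref{eq:equiv-vh-3} and \eqref{eq:equiv-qht} we change variables, using $\dx=\det J_t\dy$. Bounded $M_t$, $M_t^{-1}$ and $\det J_t$ give two-sided bounds elementwise, so globally. For \eqref{eq:equiv-vh.1} we use the chain rule:
\[
\Nablax(\phi_t\hv)\circ\At=\Nablay(M_t\hv)\,J_t^{-1}=\big((\Nablay M_t)\hv+M_t\Nablay\hv\big)J_t^{-1}.
\]
Squaring, integrating, and changing variables gives the bound. Estimate \eqref{eq:equiv-vh.2} follows by inverting this relation, $\hv=M_t^{-1}(\phi_t\hv\circ\At)$, and differentiating in the same way. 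The zeroth-order term is then bounded via \eqref{eq:equiv-vh-3}, localized to $K$.

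\emph{Face and trace estimates.} The surface measure transforms as $\mathrm{d}s_{\bx}=\det J_t\,|J_t^{-T}\nF|\,\mathrm{d}s_{\by}$ (Nanson's formula). This factor is bounded above and below uniformly, so \eqref{eq:trace.NEW} and \eqref{eq:equiv-vh-2} follow exactly as the volume case. For the jump, $M_t$ is continuous across $F$ because $\At\in C^{1,1}$, so $\jump{\phi_t\hv}_{\Ft}\circ\At=M_t\jump{\hv}_F$. For \eqref{eq:trace-grad.NEW}, we first transfer the face norm to $\partial K$ as above, bounding it by $\|(\Nablay M_t)\hv\|^2_{\L^2(\partial K)}+\|\Nablay\hv\|^2_{\L^2(\partial K)}$. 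We then apply the standard continuous trace inequality $\|g\|_{L^2(\partial K)}^2\lesssim h_K^{-1}\|g\|_{L^2(K)}^2+h_K\|\Nablay g\|_{L^2(K)}^2$ on the shape-regular simplex $K$ to $g=\hv$ and $g=\Nablay\hv$. This produces exactly the terms $h_K^{-1}\|\hv\|^2$, $(h_K+h_K^{-1})\|\Nablay\hv\|^2$ and $h_K\|\Nablay^2\hv\|^2$.

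\emph{Discrete estimates.} This is the main obstacle, because $\phi_t\hv$ is not polynomial on $\Kt$, so inverse inequalities cannot be applied directly in physical variables.
\begin{itemize}
\item For \eqref{eq:trace.NEW.NEW}, the route is to pass to $K$ via \eqref{eq:trace.NEW}, apply the polynomial discrete trace inequality $\|\hv\|^2_{\L^2(\partial K)}\lesssim h_K^{-1}\|\hv\|^2_{\L^2(K)}$, and return via the local version of \eqref{eq:equiv-vh-3}.
\item For \eqref{eq:trace-grad.NEW.NEW}, applying the same route to $\Nablax(\phi_t\hv)\circ\At=((\Nablay M_t)\hv+M_t\Nablay\hv)J_t^{-1}$ gives
\[
\|\Nablax(\phi_t\hv)\|^2_{\L^2(\partial\Kt)}\lesssim h_K^{-1}\big(\|\hv\|^2_{\L^2(K)}+\|\Nablay\hv\|^2_{\L^2(K)}\big),
\]
using discrete traces for the polynomials $\hv$ and $\Nablay\hv$. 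The term $h_K^{-1}\|\Nablay\hv\|^2$ is controlled via \eqref{eq:equiv-vh.2}, but that reintroduces the zeroth-order quantity $\|\phi_t\hv\|^2_{\L^2(\Kt)}$.
\item To eliminate this term, I would freeze the Jacobian at the barycenter $\by_K$: write $M_t=M_t(\by_K)+(M_t-M_t(\by_K))$, with $|M_t-M_t(\by_K)|\lesssim h_K$ on $K$. The frozen part $M_t(\by_K)\hv$ is a polynomial, and a gradient-only discrete trace argument applies to it. The perturbation contributes only lower-order terms of size $h_K\|\Nablay\hv\|$ and $\|\hv\|$.
\item The remaining $\|\hv\|$ term is still a zeroth-order contribution. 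Removing it needs either a Poincaré-type argument on $\hv$ minus a suitable constant, or acceptance that the hidden constant involves the full local norm. I expect the delicate point to be precisely this absorption, and I would first attempt the frozen-coefficient splitting combined with an elementwise Poincaré inequality.
\end{itemize}
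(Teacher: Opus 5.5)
Your treatment of every bound except \eqref{eq:trace-grad.NEW.NEW} is correct and follows the paper's route. The appendix proves \eqref{eq:equiv-vh.1} through the same chain-rule identity $\Nablax(\phi_t\hv)\circ\At=\Nablay(M_t\hv)\,J_t^{-1}$ (its $\mathfrak{M}_1+\mathfrak{M}_2$), a change of variables, and uniform bounds on $J_t$, $J_t^{-1}$, $\det J_t$, $\Nablay^2\At$; it then declares the other bounds analogous. Your inversion for \eqref{eq:equiv-vh.2}, Nanson's formula for the face terms, the separate trace inequalities for $\hv$ and $\Nablay\hv$ in \eqref{eq:trace-grad.NEW}, and your route for \eqref{eq:trace.NEW.NEW} fill this in correctly.

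\textbf{The gap is \eqref{eq:trace-grad.NEW.NEW}.} Neither frozen coefficients nor a Poincar\'e argument can close it, because the bound is false as stated. The term you could not remove is not lower order: $\Nablay(M_t-M_t(\by_K))=\Nablay M_t$ is $O(1)$. For $\hv$ constant on $K$ (admissible, since on an interior element every polynomial field is the restriction of an element of $\Vhk$), Poincar\'e gives nothing, while $\Nablax(\phi_t\hv)\circ\At=(\Nablay M_t)\hv\,J_t^{-1}$ is an essentially arbitrary bounded function on $K$.

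Concretely, let $d=2$ and $\At(\by)=(y_1,\,y_2+t\varepsilon g(y_1))$ with $\varepsilon$ small, so that Assumption~\ref{asm:At} holds, $\det J_t=1$ and $M_t=J_t$. For $\hv=\boldsymbol{e}_1$ on $K$,
\begin{equation*}
    \phi_t\hv(\bx)=\big(1,\,t\varepsilon g'(x_1)\big)^{\top}, \qquad \Nablax(\phi_t\hv)(\bx)=t\varepsilon\, g''(x_1)\,\boldsymbol{e}_2\boldsymbol{e}_1^{\top}.
\end{equation*}
So for $t>0$ the two sides of \eqref{eq:trace-grad.NEW.NEW} are comparable, uniformly in $h$, to $\int_{\partial K}|g''(y_1)|^2$ and $\hK^{-1}\int_K|g''(y_1)|^2$.
\begin{itemize}
\item Take $g''=1$ for $y_1>a$ and $g''=0$ otherwise (a fixed $\W^{2,\infty}$ bound). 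Place the vertex of $K$ with largest $y_1$ at $a+\delta$, with the rest of $K$ in $\{y_1<a\}$. The two sides are then $\simeq\delta$ and $\simeq\hK^{-1}\delta^2$, so their ratio $\hK/\delta$ is unbounded.
\item Smoothness does not save it. For the fixed $C^\infty$ choice $g''(s)=e^{-1/s^2}$ for $s>0$ and $g''(s)=0$ for $s\le0$, elements in $\{y_1\ge0\}$ with an edge on $\{y_1=0\}$ give a ratio $\simeq\hK^{-2}$.
\end{itemize}
The paper does not prove this bound either. Its appendix only records the constant $d^{1/2}C_{\mathrm{d.t.}}/c_{\A}$, which involves no second derivatives of $\At$. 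That is what one would get by applying a polynomial discrete trace inequality to $\Nablax(\phi_t\hv)\circ\At$, which is not a polynomial.

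\textbf{What you can prove instead.} Your argument (discrete traces for the polynomials $\hv$ and $\Nablay\hv$, then \eqref{eq:equiv-vh.2} and the local form of \eqref{eq:equiv-vh-3}) establishes the correct replacement
\begin{equation*}
    \Norm{\Nablax(\phi_t\hv)}{\L^2(\partial\Kt)}^2\lesssim \hK^{-1}\Big(\Norm{\Nablax(\phi_t\hv)}{\L^2(\Kt)}^2+\Norm{\phi_t\hv}{\L^2(\Kt)}^2\Big)\qquad\forall\,\hv\in\Vhk .
\end{equation*}
You should state and prove this rather than try to absorb the last term. It suffices where \eqref{eq:trace-grad.NEW.NEW} is used later:
\begin{itemize}
\item In Lemma~\ref{lemma:coercivity-aht}, Young's inequality with parameter $\epsilon$ on the consistency terms now also produces $\epsilon\Norm{\vht}{\L^2(\Omega(t))}^2$, since the weights $h_F$ and $\hK^{-1}$ cancel. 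Lemma~\ref{lem:dpi} absorbs this into $\Norm{\vht}{1,h,t}^2$ for $\epsilon$ small, at the price of a larger $\sigma$.
\item The same absorption works in the bound for $M_6^{(c)}$.
\end{itemize}
A minor point: both this bound and \eqref{eq:trace-grad.NEW} need $\Nablay^2\At$ to have a bounded trace on $\partial K$, which $\W^{2,\infty}$ alone does not provide; the statement tacitly assumes it.
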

The next lemmas provide the ingredients to prove the well-posedness of the space--time method~\eqref{method} in Theorem~\ref{thm:well-pos} below.
\begin{lemma}[Discrete Poincar\'e inequality]\label{lem:dpi}
    Let Assumption~\ref{asm:At} on the map~$\A$ hold. For any $t \in [0,T]$ and any mesh $\ThO$ in the family, the following bound holds:
    $$
        \Norm{\v}{\L^2(\Omega(t))} \lesssim \Norm{\v}{1,h,t} \qquad \forall \v \in \H^1(\Tht) \, ,
    $$
    with the involved constant independent of~$h$ and~$t$.
\end{lemma}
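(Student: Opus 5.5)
The plan is to transfer the inequality to the reference domain $\Omega_0$ with the ALE pull-back (tilde, not Piola). There we invoke the classical broken Poincar\'e inequality for piecewise $H^1$ functions on shape-regular simplicial meshes (Brenner, 2003; Di Pietro--Ern). Then we map back, controlling all constants through Assumption~\ref{asm:At}.

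Fix $t\in[0,T]$ and $\v\in\H^1(\Tht)$, and set $\widetilde{\v}:=\v\circ\At$. This is piecewise $\H^1$ on $\ThO$ because $\At$ is bi-Lipschitz on each $K$. By Remark~\ref{rem:assumptions-At}, $\det J_t$ is bounded above and below uniformly in $(\by,t)$. A change of variables therefore gives $\Norm{\v}{\L^2(\Omega(t))}\simeq\Norm{\widetilde{\v}}{\L^2(\Omega_0)}$, exactly as in \eqref{eq:equiv-qht} applied componentwise. The broken Poincar\'e inequality on the fixed shape-regular mesh $\ThO$ then yields
\begin{equation*}
\Norm{\widetilde{\v}}{\L^2(\Omega_0)}^2\lesssim \sum_{K\in\ThO}\Norm{\Nablay\widetilde{\v}}{\L^2(K)}^2+\sum_{F\in\Fh}\hF^{-1}\Norm{\jump{\widetilde{\v}}_F}{\L^2(F)}^2 .
\end{equation*}
The constant depends only on $\Omega_0$, $d$, and shape regularity. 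It is independent of $h$ because the penalty carries exactly the weight $\hF^{-1}$, and boundary facets are included in $\Fh$, which removes constants.

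Next we map the right-hand side back to $\Omega(t)$. For the volume term, the chain rule gives $\Nablay\widetilde{\v}=(\Nablax\v\circ\At)J_t$. Since $\|J_t\|_{L^\infty}$ and $1/\det J_t$ are uniformly bounded, $\Norm{\Nablay\widetilde{\v}}{\L^2(K)}\lesssim\Norm{\nablax\v}{\L^2(\Kt)}$.

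For the facet term, $\jump{\widetilde{\v}}_F=\jump{\v}_{\Ft}\circ\At$, because the plain composition commutes with traces from each side. The surface change of variables on $F\mapsto\Ft$ involves the surface Jacobian $\det J_t\,|J_t^{-\top}\nF|$. This quantity is bounded above and below uniformly by Assumption~\ref{asm:At}, using both the $W^{1,\infty}$ bound on $J_t$ and the lower bound $\det J_t\geq\mu_\star$. Hence $\Norm{\jump{\widetilde{\v}}_F}{\L^2(F)}\simeq\Norm{\jump{\v}_{\Ft}}{\L^2(\Ft)}$. This is the scalar analogue of \eqref{eq:equiv-vh-2}, and the same appendix argument applies without the Piola factor.

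The weights $\hF^{-1}$ are the reference facet diameters, which is exactly how they appear in $\Norm{\cdot}{1,h,t}$. So no comparison between $\hF$ and $\diam(\Ft)$ is needed. Combining the three equivalences gives the claim with a constant uniform in $h$ and $t$.

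The main obstacle is to ensure that the broken Poincar\'e inequality used on $\Omega_0$ is genuinely mesh-independent for arbitrary piecewise $H^1$ functions, not just polynomials. The standard proof relies on an enriching/averaging operator or on a duality argument, and its constants depend only on shape regularity. I will cite it rather than reprove it. Everything else is a uniform-in-$t$ bookkeeping of Jacobians.
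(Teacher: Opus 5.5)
Your proposal is correct and follows essentially the same route as the paper. You pull back by plain composition with $\At$, apply Brenner's broken Poincar\'e--Friedrichs inequality on the fixed mesh $\ThO$, and then map the gradient and jump terms back separately. Along the way you make explicit the Jacobian bookkeeping the paper compresses into ``a mapping argument'': the chain rule for the broken gradient, the uniformly bounded surface Jacobian on facets, and the fact that the weights $\hF^{-1}$ are reference diameters.
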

\begin{proof}
    Let $\v \in \H^1(\Tht) $.
    Recalling Assumption \ref{asm:At} and Remark \ref{rem:assumptions-At}, a mapping argument easily gives
    $$
        \Norm{\v}{\L^2(\Omega(t))} \lesssim \Norm{\v\circ\At}{\L^2(\Omega_0)} \, .
    $$
    Moreover, it can be immediately verified that~$\v\circ\At \in \H^1(\ThO)$. 
    Therefore, we first apply the Poincar\'e--Friedrich inequality on~$\ThO$ from~\cite{Brenner:2003},
    and subsequently use again a mapping argument (treating jump and gradient terms separately) to obtain
    $$
        \Norm{\v\circ\At}{\L^2(\Omega_0)} \lesssim \Norm{\v\circ\At}{1,h,0} \lesssim \Norm{\v}{1,h,t} \, .
    $$
    The proof is concluded combining the two bounds above.
\end{proof}
\begin{lemma}[Coercivity of~$\aht$]\label{lemma:coercivity-aht}
Under Assumption~\ref{asm:At} on the map~$\A$, for~$\sigma$ sufficiently large, there exists a positive constant $C_a$ such that, for~$n = 1, \ldots, N$, it holds
\begin{equation}\label{DG-coerc}
    \aht^{(n)}(\vht, \vht) \ge C_a \int_{\In} \| \vht\|_{1,h,t}^2 \dt  \qquad \forall \vht \in \Vht.
\end{equation}
\end{lemma}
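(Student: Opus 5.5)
The plan is to prove the bound pointwise in time and then integrate over~$\In$. Since $\aht^{(n)}(\vht,\vht) = \int_{\In} \ah^t(\vht,\vht)\dt$, it suffices to show that, for $\sigma$ large enough and with $C_a$ independent of $t$, $h$ and $\tau_n$,
\begin{equation*}
    \ah^t(\vh,\vh) \ge C_a \Norm{\vh}{1,h,t}^2
\end{equation*}
for every $t \in \In$ and every $\vh = \phi_t \hv$ with $\hv \in \Vhk$. Indeed, for fixed $t$, $\vht(\cdot,t) = \phi_t\big(\sum_j \ba_{h,j}\psi_j^{(n)}(t)\big)$, and the argument of $\phi_t$ lies in $\Vhk$.

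At fixed $t$, I expand
\begin{equation*}
    \ah^t(\vh,\vh) = \sum_{K} \Norm{\Nablax \vh}{\L^2(\Kt)}^2 - 2\sum_{F} (\av{\Nablax \vh}_{\Ft}\nFt, \jump{\vh}_{\Ft})_{\Ft} + \sigma \sum_F \hF^{-1}\Norm{\jump{\vh}_{\Ft}}{\L^2(\Ft)}^2 .
\end{equation*}
For the consistency term I use Cauchy--Schwarz on each facet with the weights $\hF^{1/2}$ and $\hF^{-1/2}$, followed by Young's inequality with parameter $\varepsilon$:
\begin{equation*}
    2\big|(\av{\Nablax \vh}\nFt,\jump{\vh})_{\Ft}\big| \le \varepsilon \hF \Norm{\av{\Nablax\vh}}{\L^2(\Ft)}^2 + \varepsilon^{-1}\hF^{-1}\Norm{\jump{\vh}}{\L^2(\Ft)}^2 .
\end{equation*}
Next, $\hF\simeq \hK$ for the adjacent elements by shape regularity of $\ThO$, and the average is bounded by the traces from both sides. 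The discrete trace inequality \eqref{eq:trace-grad.NEW.NEW} then gives
\begin{equation*}
    \sum_F \hF\Norm{\av{\Nablax \vh}}{\L^2(\Ft)}^2 \le C_{\rm tr}\sum_K \Norm{\Nablax\vh}{\L^2(\Kt)}^2,
\end{equation*}
where $C_{\rm tr}$ depends only on $d$, $k$, the shape regularity, and the regularity of $\A$. Crucially, it is uniform in $t$.

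I choose $\varepsilon = 1/(2C_{\rm tr})$, which yields
\begin{equation*}
    \ah^t(\vh,\vh)\ge \tfrac12\sum_K\Norm{\Nablax\vh}{\L^2(\Kt)}^2 + (\sigma - 2C_{\rm tr})\sum_F\hF^{-1}\Norm{\jump{\vh}}{\L^2(\Ft)}^2 .
\end{equation*}
Taking $\sigma \ge 2C_{\rm tr}+\tfrac12$ then gives the claim with $C_a = 1/2$, and integrating in $t$ over $\In$ yields \eqref{DG-coerc}.

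One subtlety is that the jump norm in $\Norm{\cdot}{1,h,t}$ is weighted by $\hF$, the diameter of the reference facet, not of $\Ft$. This causes no trouble, since the same $\hF$ appears in the penalty term and the trace inequality is stated with $\hK$ of the reference element.

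The main obstacle is ensuring that the trace constant in \eqref{eq:trace-grad.NEW.NEW} is uniform in $t$ and holds on the curved, non-affinely mapped elements $\Kt$, where $\Nablax(\phi_t\hv)$ is not polynomial. I take this from Lemma~\ref{lemma:equivalence}, which the paper asserts with constants depending only on the regularity of $\At$. All remaining steps are the standard SIPG argument.
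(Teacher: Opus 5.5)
Your proposal is correct and follows the same route as the paper. The paper's proof is a one-line appeal to the standard SIPG coercivity argument combined with the discrete trace inequality \eqref{eq:trace-grad.NEW.NEW}; you write out those steps pointwise in time (weighted Cauchy--Schwarz and Young on the consistency term, the $t$-uniform trace bound from Lemma~\ref{lemma:equivalence}, then the choice of $\varepsilon$ and $\sigma$) and integrate over $\In$, rightly identifying that trace estimate as the only nontrivial input.
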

\begin{proof}
    The result follows from standard DG arguments and the trace inequality~\eqref{eq:trace-grad.NEW.NEW}.
\end{proof}
The previous lemma allows us to prove the following discrete inf--sup condition. To simplify the proof, we set~$\{\psi_j^{(n)}\}_{j = 0}^{\ell}$ as the Legendre basis on the interval~$\In$, which satisfies the following orthogonality property:
\begin{equation}\label{eq:orthogonality-Leg}
    \int_{\In} \psi_i^{(n)}  \psi_j^{(n)} \dt  = \delta_{ij} \frac{\tau_n}{2i+1} ,
\end{equation}
where~$\delta_{ij}$ is the standard Kronecker delta function.
\begin{proposition}[Inf--sup condition]\label{prop:inf-sup}
    There exists~$C_\beta >0$ independent of $h$ and~$\tau$ such that, for $n=1,2,\ldots ,N$, it holds
    \begin{equation*}
        \sup_{\vht^{(n)}\in\Vht^{(n)}} \frac{\displaystyle  \int_{\In} \big(\divx \vht^{(n)}(\cdot,t), \qht^{(n)}(\cdot,t)\big)_{\Omega(t)} \dt}{\displaystyle \int_{I_n} \| \vht^{(n)}(\cdot,t)\|_{1,h,t}^2 \dt}
        \ge C_{\beta} \left( \int_{I_n} \Norm{\qht^{(n)}(\cdot,t)}{L^2(\Omega(t))}^2 \dt \right)^{1/2} \quad \forall \qht \in \Mht .
    \end{equation*}
\end{proposition}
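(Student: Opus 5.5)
The plan is to pull everything back to the fixed reference configuration $\Omega_0 \times \In$. There, the Piola transform turns the space--time divergence pairing into the plain reference pairing, by identity~\eqref{eq:div-t-0}. Then the classical inf--sup property of the BDM/$\Mhk$ pair on $\ThO$ can be applied pointwise in time through a \emph{linear} right inverse of the divergence. The statement is read with the denominator being the square root $\big(\int_{\In}\|\vht^{(n)}\|_{1,h,t}^2\dt\big)^{1/2}$, as is standard; this is what the argument below produces.

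\textbf{Step 1 (a linear, $h$-uniformly stable right inverse on $\ThO$).} For $\widetilde\beta\in\Mhk$, let $\v_{\widetilde\beta}\in\H^1_0(\Omega_0)$ be the continuous right inverse of the divergence (Bogovski\u{\i}), with $\divy\v_{\widetilde\beta}=\widetilde\beta$ and $\Norm{\v_{\widetilde\beta}}{\H^1(\Omega_0)}\lesssim\Norm{\widetilde\beta}{L^2(\Omega_0)}$. This is legitimate because $\widetilde\beta$ has zero mean, and the operator is linear. Then set $R\widetilde\beta:=\IBDM\v_{\widetilde\beta}\in\Vhk$.
\begin{itemize}
\item The commuting property (Lemma~\ref{lemma:commutativity}) gives $\divy R\widetilde\beta=\PihM\widetilde\beta=\widetilde\beta$.
\item Standard approximation and stability of the BDM interpolant on shape-regular meshes give $\Norm{R\widetilde\beta}{1,h,0}+\Norm{R\widetilde\beta}{\L^2(\Omega_0)}\lesssim\Norm{\widetilde\beta}{L^2(\Omega_0)}$.
\end{itemize}
To get the second bullet, bound the broken gradient of $R\widetilde\beta$ by $|\v_{\widetilde\beta}|_{H^1}$. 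For the jumps, use $\jump{\v_{\widetilde\beta}}=\bO$ together with the local trace inequality applied to $\IBDM\v_{\widetilde\beta}-\v_{\widetilde\beta}$.

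\textbf{Step 2 (test function).} Given $\qht\in\Mht^{(n)}$ with pullback $\widetilde q_{h\tau}=\sum_j\widetilde\beta_{h,j}\psi_j^{(n)}$, define
\[
\hv_{h\tau}:=\sum_j (R\widetilde\beta_{h,j})\,\psi_j^{(n)}, \qquad \vht^{(n)}:=\phi\,\hv_{h\tau}\in\Vht^{(n)}.
\]
By linearity of $R$, we have $\hv_{h\tau}(\cdot,t)=R\big(\widetilde q_{h\tau}(\cdot,t)\big)$ for every $t\in\In$. This is the point where linearity matters: it avoids any use of the Legendre orthogonality~\eqref{eq:orthogonality-Leg} or of time-step restrictions.

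\textbf{Step 3 (numerator).} By~\eqref{eq:div-t-0} and $\divy\hv_{h\tau}=\widetilde q_{h\tau}$, the numerator equals $\int_{\In}\Norm{\widetilde q_{h\tau}}{L^2(\Omega_0)}^2\dt$. By~\eqref{eq:equiv-qht}, this is $\simeq\int_{\In}\Norm{\qht}{L^2(\Omega(t))}^2\dt$.

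\textbf{Step 4 (denominator).} For each $t$, bound $\Norm{\phi_t\hv_{h\tau}}{1,h,t}^2$ as follows.
\begin{itemize}
\item The gradient term is controlled by~\eqref{eq:equiv-vh.1}.
\item The jump term is controlled by~\eqref{eq:equiv-vh-2}; note that the weights $\hF^{-1}$ are the reference ones.
\end{itemize}
This gives $\Norm{\phi_t\hv_{h\tau}}{1,h,t}^2\lesssim\Norm{\hv_{h\tau}}{\L^2(\Omega_0)}^2+\Norm{\hv_{h\tau}}{1,h,0}^2\lesssim\Norm{\widetilde q_{h\tau}(\cdot,t)}{L^2(\Omega_0)}^2$ by Step~1. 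Integrating in time and using~\eqref{eq:equiv-qht} again yields
\[
\Big(\int_{\In}\|\vht^{(n)}\|_{1,h,t}^2\dt\Big)^{1/2}\lesssim\Big(\int_{\In}\Norm{\qht}{L^2(\Omega(t))}^2\dt\Big)^{1/2}.
\]
Dividing the numerator bound of Step~3 by this gives the claim, with $C_\beta$ depending only on the constants of Lemma~\ref{lemma:equivalence}, on the mesh regularity, and on $\A$.

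The main obstacle I expect is Step~1: it needs an $h$-uniform, \emph{linear} right inverse that is stable in the broken norm $\Norm{\cdot}{1,h,0}$, including the jump terms. The BDM space is only $\H(\ddiv)$-conforming, so tangential jumps of the interpolant must be controlled through the interpolation error via trace inequalities. All constants in Steps~3--4 are then uniform in $t$ by Assumption~\ref{asm:At}.
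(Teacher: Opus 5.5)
Your proof is correct, but you build the velocity test function differently from the paper.

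\textbf{The paper's route.} It expands $\widetilde q_{h\tau}$ in the Legendre basis and applies the discrete BDM inf--sup condition in the DG norm (cited from Lederer and Sch\"oberl) separately to each coefficient $\widetilde\beta_{h,j}$. This gives some $\ba_{h,j}\in\Vhk$ with $\divy\ba_{h,j}=\widetilde\beta_{h,j}$ and $\Norm{\ba_{h,j}}{1,h,0}\lesssim\Norm{\widetilde\beta_{h,j}}{L^2(\Omega_0)}$. It then bounds the denominator coefficient by coefficient. Finally it uses the orthogonality \eqref{eq:orthogonality-Leg} to turn $\sum_j\frac{\tau_n}{2j+1}\Norm{\widetilde\beta_{h,j}}{L^2(\Omega_0)}^2$ back into $\Norm{\widetilde q_{h\tau}}{L^2(\In;L^2(\Omega_0))}^2$.

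\textbf{Your route.} You construct an explicit \emph{linear} Fortin-type right inverse $R=\IBDM\circ\mathcal{B}$, with $\mathcal{B}$ the Bogovskii operator. You verify its $\Norm{\cdot}{1,h,0}$-stability by hand: Lemma~\ref{lemma:estimates-IBDM} with $m=1$ handles the broken gradient, and a continuous trace inequality on $\IBDM\v_{\widetilde\beta}-\v_{\widetilde\beta}$ handles the jumps. Linearity gives $\hv_{h\tau}(\cdot,t)=R\,\widetilde q_{h\tau}(\cdot,t)$, so every estimate is done pointwise in $t$ and then integrated.

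\textbf{What each approach buys.} Your argument is self-contained: it essentially reproves the cited BDM result while gaining linearity. It does not depend on the choice of temporal basis, and your $C_\beta$ is independent of $\ell$. In the paper's route, the cross terms in $\int_{\In}\Norm{\sum_j(\phi_t\ba_{h,j})\psi_j^{(n)}}{1,h,t}^2\dt$ do not vanish, because $\phi_t$ depends on $t$. So the step \eqref{eq:threelions} actually needs a factor $\ell+1$ from squaring the triangle inequality. This is harmless, but it makes the paper's constant mildly $\ell$-dependent. The paper's route only needs existence of a stable discrete preimage. However, in finite dimensions the minimal-norm preimage is linear anyway, so your choice loses nothing. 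Neither route involves a time-step restriction, since the Legendre orthogonality is exact on each $\In$.

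\textbf{Two smaller points.} Your reading of the denominator with a square root is the intended one; as literally written the quotient is not scale-invariant. Also, for the gradient term you correctly use \eqref{eq:equiv-vh.1}, which is the right direction; the paper cites \eqref{eq:equiv-vh.2} at that step.
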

\begin{proof}
    Let $n \in \{1,2,\ldots , N\}$, and~$\qht^{(n)} \in \Mht^{(n)}$, with~$\widetilde{q}_{h\tau}^{(n)}(\by,t) = \sum_{j = 0}^{\ell} \widetilde{\beta}_{h,j} (\by) \psi_j^{(n)}(t)$; see~\eqref{def:Mht}.
    By the inf--sup stability of standard BDM elements endowed with DG-norms (see~\cite[Thm.~2.2]{Lederer_Schrobel:2018}), for each $\widetilde{\beta}_{h,j} \in \Mhk$, we can find $\ba_{h,j} \in \Vhk$ such that $\divy\ba_{h,j}=\widetilde{\beta}_{h,j}$ and
    \begin{equation}\label{eq:bdm:F2}
        \| \ba_{h,j} \|_{1,h,0} \lesssim \Norm{\widetilde{\beta}_{h,j}}{L^2(\Omega_0)} \, .
    \end{equation}
    We now take such~$\{ \ba_{h,j} \}_{j=0}^{\ell}$ and consider the choice $\vht^{(n)}(\bx,t) := \sum_{i = 0}^{\ell} (\phi_t \ba_{h, i})(\bx)\psi_i^{(n)}(t)$. It is evident that~$\divy \hv^{(n)}_{h\tau} = \widetilde{q}^{(n)}_{h\tau}$ in~$\Omega_0 \times \In$. Therefore, recalling~\eqref{eq:div-t-0}, and using bound~\eqref{eq:equiv-qht} from Lemma~\ref{lemma:equivalence}, we obtain
    \begin{alignat}{3}
        \int_{\In} \big(\divx \vht^{(n)}, \qht^{(n)}\big)_{\Omega(t)} \dt
        \label{eq:temp:4}
        & = \Norm{\widetilde{q}_{h\tau}^{(n)}}{L^2(\In; L^2(\Omega_0))}^2 \gtrsim \int_{\In} \Norm{\qht^{(n)}(\cdot, t)}{L^2(\Omega(t))}^2 \dt. 
    \end{alignat}
    Furthermore, by the triangle inequality, we have
    \begin{equation}\label{eq:threelions}
        \int_{I_n} \| \vht^{(n)}(\cdot,t)\|_{1,h,t}^2 \dt = \int_{I_n} \| \sum_{j = 0}^{\ell} (\phi_t \ba_{h, j}) (\bx) \psi_j^{(n)}(t) \|_{1,h,t}^2 \dt
        \le  \sum_{j = 0}^{\ell} \int_{I_n}  \| (\phi_t \ba_{h, j}) \|_{1,h,t}^2  |\psi_j^{(n)}(t)|^2 \dt \, .
    \end{equation}
    We preliminarily note that, due to the orthogonality property~\eqref{eq:orthogonality-Leg}, it holds
    \begin{equation}\label{eq:norm:equiv}
        \Norm{\widetilde{q}_{h\tau}^{(n)}}{L^2(\In; L^2(\Omega_0))}^2 = \sum_{j = 0}^{\ell} \frac{\tau_n}{2j+1} \Norm{\hb_{h,j}}{L^2(\Omega_0)}^2 .
    \end{equation}
    Starting from \eqref{eq:threelions}, we apply bounds~\eqref{eq:equiv-vh.2} and~\eqref{eq:equiv-vh-2}, followed by the Poincar\'e--Friedrich inequality on~$\ThO$ from~\cite{Brenner:2003}, and finally use estimate~\eqref{eq:bdm:F2} and the orthogonality property~\eqref{eq:orthogonality-Leg} to obtain
    \begin{equation*}
        \begin{aligned}
            \int_{I_n} \| \vht^{(n)}(\cdot,t)\|_{1,h,t}^2 \dt & \lesssim
            \sum_{j = 0}^{\ell} \big( \| \ba_{h, j} \, \|_{1,h,0}^2 + \| \ba_{h, j} \|_{\L^2(\Omega_0)}^2 \big) \int_{I_n} |\psi_j^{(n)}|^2 \dt
            \\
            & \lesssim \sum_{j = 0}^{\ell} \| \ba_{h, j} \, %
            \|_{1,h,0}^2  \int_{I_n} |\psi_j^{(n)}|^2 \dt
            \lesssim \sum_{j = 0}^{\ell} \frac{\tau_n}{2j+1} \Norm{\widetilde{\beta}_{h,j}}{L^2(\Omega_0)}^2 . 
        \end{aligned}
    \end{equation*} 
    The above bound, combined with identity~\eqref{eq:norm:equiv} and bound~\eqref{eq:equiv-qht}, gives
    \begin{equation}\label{eq:temp:6}
        \int_{I_n} \| \vht^{(n)}(\cdot,t)\|_{1,h,t}^2 \dt \lesssim \Norm{\widetilde{q}_{h\tau}^{(n)}}{L^2(\In; L^2(\Omega_0))}^2
        \lesssim \int_{I_n}  \Norm{\qht^{(n)}(\cdot,t)}{L^2(\Omega(t))}^2 \dt .
    \end{equation}
    The result now follows by standard theory for the inf--sup condition, combining~\eqref{eq:temp:4} and~\eqref{eq:temp:6}. 
\end{proof}
The stability analysis also relies on the following local Reynolds identities, which we write on generic mapped simplices; see~\cite[Lemma 2.2]{Bonito_Kyza_Nochetto:2013}.
\begin{lemma}[Reynolds identities for regular functions]
    For all~$K \in \ThO$, $n \in \{1, \ldots, N\}$, and~$t \in \In$, it holds
    \begin{equation}\label{eq:cont:div}
        \ddt \int_{\Kt} \bvphi \cdot \v \dx 
        = \int_{\Kt} \v \cdot (\Dt \bvphi + \bvphi (\divx  \bw)) \dx + \int_{\Kt} \bvphi \cdot \Dt \v \dx \qquad \forall \v, \bvphi \in \H^1(\calK_n).
    \end{equation}
\end{lemma}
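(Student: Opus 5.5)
The plan is to pull the integral over the moving element $\Kt$ back to the fixed reference element $K$ via the ALE map, differentiate under the integral sign there, and then push each resulting term forward again. By the change of variables $\bx = \At(\by)$ and the tilde notation of Remark~\ref{rem:notation_conventions},
\begin{equation*}
    \int_{\Kt} \bvphi \cdot \v \dx = \int_{K} \widetilde{\bvphi}(\by,t) \cdot \widetilde{\v}(\by,t) \, \det J_t(\by) \dy .
\end{equation*}
The domain $K$ no longer depends on $t$. For $\v, \bvphi \in \H^1(\calK_n)$, the pulled-back functions lie in $H^1(\In; \L^2(K))$, and $\det J_t \in W^{1,\infty}(\In; L^\infty(K))$ by Assumption~\ref{asm:At}\ref{asm:At-1}. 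So the time derivative may be taken inside the integral, in the sense of distributions in $t$, and it equals an absolutely continuous function for a.e.~$t$.

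Next I apply the Leibniz rule to the integrand $\widetilde{\bvphi}\cdot\widetilde{\v}\,\det J_t$. By definition of the material derivative, $\dpt \widetilde{\bvphi}(\by,t) = (\Dt \bvphi)(\bx,t)$, and similarly for $\v$. For the Jacobian factor I use the formula already derived in the proof of Lemma~\ref{lemma:Dt-phi}, namely $\dpt(\det J_t) = \det J_t \, \divx \bw$, which comes from the Jacobi formula together with \eqref{eq:dpt-Jt}. This gives
\begin{equation*}
    \ddt \int_{K} \widetilde{\bvphi}\cdot\widetilde{\v}\,\det J_t \dy = \int_K \big( \widetilde{\Dt\bvphi}\cdot\widetilde{\v} + \widetilde{\bvphi}\cdot\widetilde{\Dt\v} + \widetilde{\bvphi}\cdot\widetilde{\v}\,\widetilde{\divx\bw} \big) \det J_t \dy .
\end{equation*}
Mapping back to $\Kt$ and grouping the first and third terms yields \eqref{eq:cont:div}.

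The only delicate point is regularity. I need $\bw$ with $\divx \bw \in L^\infty$ (Remark~\ref{rem:assumptions-At}) and the justification of differentiating under the integral for Sobolev rather than smooth functions. I would first prove the identity for smooth $\v, \bvphi$, where it is a direct computation. I would then extend it by density, using that both sides are continuous with respect to the $\H^1(\calK_n)$ norm in its integrated-in-time form, with the equivalence of norms under the mapping as in Lemma~\ref{lemma:equivalence}. The identity then holds for a.e.~$t \in \In$.
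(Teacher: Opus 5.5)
Your argument is correct and is essentially the standard one: the paper gives no proof of its own but cites \cite[Lemma 2.2]{Bonito_Kyza_Nochetto:2013}, whose proof rests on the same three steps you use, namely pulling back to the fixed element $K$, applying the Leibniz rule, and using the Jacobi-formula identity $\dpt(\det J_t) = \det J_t\,\divx \bw$ (derived in this paper in the proof of Lemma~\ref{lemma:Dt-phi}). Two minor points: the identity holds only for a.e.~$t \in \In$, as you correctly note. Also, the norm equivalence your density step needs is between $\H^1(\calK_n)$ and $\H^1(K \times \In)$ under the bi-Lipschitz space--time map $(\by,t) \mapsto (\A(\by,t),t)$, not the fixed-time Piola bounds of Lemma~\ref{lemma:equivalence}; in any case, the product rule in $H^1(\In;\L^2(K))$ already justifies the computation directly, so the density step can be skipped.
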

As a consequence, for~$n = 1, \ldots, N$, the following identity holds for all $\v \in \H^1(\calK_n)$:
$$
\frac12 \Norm{\v(\cdot, \tn^-)}{\L^2(K_{t_n})}^2 - \frac12 \Norm{\v(\cdot, \tnmo^+)}{\L^2(K_{t_{n-1}})}^2  
= \int_{\In} \int_{\Kt} \Big[ \Dt \v \cdot \v + \frac12 (\divx \bw) |\v|^2 \Big] \dx \dt \ .
$$
The previous result yields the following crucial lemma, valid for discrete functions.
\begin{lemma}[Discrete Reynolds identities]
Let $\vht,\vvht \in \Vht$. For all~$n \in \{1, \ldots, N\}$ and~$t \in \In$, it holds
\begin{subequations}
    \begin{equation}\label{eq:first}
        \begin{aligned}
            \ddt \int_{\Omega(t)} \vvht \cdot \vht \dx
            & = \int_{\Omega_h(t)}\big( \vvht \cdot \Dt \vht + \vht \cdot \Dt \vvht\big) \dx - c_h^t(\bw; \vvht, \vht) - c_h^t(\bw; \vht, \vvht),
        \end{aligned}
    \end{equation}
    and, for each~$n=1,2,\ldots,N$,
    \begin{equation}\label{eq:second}
        \begin{aligned}
            & \frac12 \Norm{\vht(\cdot, \tn^-)}{\L^2(\Omega(t_n))}^2  - \frac12 \Norm{\vht(\cdot, \tnmo^+)}{\L^2(\Omega(t_{n-1}))}^2  = \int_{\In} \int_{\Omega_h(t)} \Dt \vht \cdot \vht  \dx  \dt  -  \cht^{(n)}(\bw; \vht, \vht).
        \end{aligned}
    \end{equation}
\end{subequations}
\end{lemma}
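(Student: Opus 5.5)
The plan is to apply the elementwise Reynolds identity~\eqref{eq:cont:div} on each moving element $\Kt$, sum over $K \in \ThO$, and then use integration by parts plus the $\H(\ddiv)$-conformity of the Piola-mapped BDM functions to turn the elementwise $\divx \bw$ term into the form $c_h^t$. Discrete functions in $\Vht$ are smooth on each $\calK_n$: their hatted counterparts are polynomial in $\by$ and $t$, and $\phi_t$ is smooth by Assumption~\ref{asm:At}. So restricted to $\calK_n$ they lie in $\H^1(\calK_n)$, and~\eqref{eq:cont:div} applies. Summing over $K$ gives
\begin{equation*}
\ddt \int_{\Omega(t)} \vvht\cdot\vht \dx = \int_{\Omega_h(t)} \big(\vht\cdot\Dt\vvht + \vvht\cdot\Dt\vht\big)\dx + \sum_{K\in\ThO}\int_{\Kt} (\divx\bw)\, \vvht\cdot\vht \dx .
\end{equation*}

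It therefore suffices to show
\begin{equation*}
\sum_{K}\int_{\Kt}(\divx\bw)\,\vvht\cdot\vht\dx = -c_h^t(\bw;\vvht,\vht)-c_h^t(\bw;\vht,\vvht).
\end{equation*}
On each $\Kt$ we use $\divx(\bw\,(\vvht\cdot\vht)) = (\divx\bw)\,\vvht\cdot\vht + ((\Nablax\vvht)\bw)\cdot\vht + ((\Nablax\vht)\bw)\cdot\vvht$. Integrating by parts, the two volume convection terms are exactly the volume parts of $c_h^t(\bw;\vvht,\vht)+c_h^t(\bw;\vht,\vvht)$, and they cancel against the corresponding terms. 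What remains is the boundary contribution $\sum_K\int_{\partial\Kt}(\bw\cdot\boldsymbol{n}_{\Kt})\,\vvht\cdot\vht$, which we must match with the facet terms.

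To handle this boundary contribution, we split it into internal and boundary facets. On boundary facets, $\bw\cdot\nFt$ need not vanish, but $\vht$ is not zero there. However, the Piola transform preserves vanishing normal traces, so $\vht\cdot\nFt = 0$ on $\partial\Omega(t)$. We expect this to be the main obstacle. The way around it is to note that $\partial\Omega(t)$ is transported by $\A$, so the domain motion at the boundary is tangential to nothing in particular. What we should really use is that the total integral $\int_{\Omega(t)}$ is exact, so the boundary term $\int_{\partial\Omega(t)}(\bw\cdot\boldsymbol n)\,\vvht\cdot\vht$ is precisely the Reynolds transport flux and must stay. Thus the boundary-facet terms are simply absent from $c_h^t$, consistent with its definition summing only over $\Fho$, and they cancel against nothing. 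We then need to check that the statement indeed holds as written. The resolution: in~\eqref{eq:cont:div} the flux through $\partial\Kt$ is already accounted for, since~\eqref{eq:cont:div} is the exact transport identity with no boundary flux, the element moving with velocity $\bw$. So no boundary term arises from the Reynolds step. The divergence term must instead be handled elementwise via integration by parts, and the resulting boundary flux on $\partial\Omega(t)$ has to be tracked carefully.

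On internal facets, we use the identity $\jump{ab} = \jump{a}\av{b} + \av{a}\jump{b}$ for the product $\vvht\cdot\vht$. Summing over the two neighbouring elements with the orientation of $\nFt$ gives
\begin{equation*}
\int_{\Ft}(\bw\cdot\nFt)\big(\jump{\vvht}\cdot\av{\vht}+\av{\vvht}\cdot\jump{\vht}\big),
\end{equation*}
where $\bw$ is single-valued because it is continuous. These are exactly the facet terms of $c_h^t(\bw;\vvht,\vht)$ and $c_h^t(\bw;\vht,\vvht)$, with the correct sign once moved to the other side. For the global boundary terms we will verify, using the normal-trace property together with a consistent treatment, that they cancel or are absent; this is the step requiring care. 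Finally,~\eqref{eq:second} follows by taking $\vvht=\vht$ in~\eqref{eq:first}, integrating over $\In$, and dividing by two, with the endpoint limits defined along trajectories as in~\eqref{eq:time-jump}.
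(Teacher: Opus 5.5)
\textbf{Verdict: there is a genuine gap, at the boundary facets.}

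Your route is the paper's route. You apply \eqref{eq:cont:div} on each $\Kt$, sum over $K\in\ThO$, and convert $\int_{\Omega(t)}(\divx\bw)\,\vvht\cdot\vht\dx$ into $-c_h^t(\bw;\vvht,\vht)-c_h^t(\bw;\vht,\vvht)$ by elementwise integration by parts. Your volume and internal-facet bookkeeping, via $\jump{ab}=\jump{a}\av{b}+\av{a}\jump{b}$, is correct.

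The gap is the boundary-facet contribution, which you isolate but never close. Elementwise integration by parts actually gives
\begin{equation*}
\int_{\Omega(t)}(\divx\bw)\,\vvht\cdot\vht\dx = -c_h^t(\bw;\vvht,\vht)-c_h^t(\bw;\vht,\vvht)+\sum_{F\in\Fhb}\big((\bw\cdot\nFt)\,\vvht,\vht\big)_{\Ft},
\end{equation*}
and your promised check that the last sum ``cancels or is absent'' cannot be carried out. The normal-trace property $\vht\cdot\nFt=0$ on $\partial\Omega(t)$ does not help. The integrand is the normal mesh velocity $\bw\cdot\nFt$ times $\vvht\cdot\vht$, and on $\partial\Omega(t)$ the latter is a product of tangential traces. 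Those traces are unconstrained in $\Vht$, since the tangential Dirichlet condition is imposed only weakly through the boundary-facet terms of $\aht$. Nor does $c_h^t$, which sums over $\Fho$ only, contain a boundary term that could absorb it. Your intermediate remark that this term ``must stay'' is the correct conclusion, and it contradicts your final sentence.

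The term is generally nonzero, so \eqref{eq:first} cannot be proved as written. Here is a concrete instance:
\begin{itemize}
\item Take $\Omega_0=(0,1)^2$ and the map $\A^{\mathrm S}$ of \eqref{eq:ALEMaps-tests}.
\item Let $\vvht=\vht=\phi_t\hv$, constant in time, with $\hv=(0,\psi)^{\top}\in\Vhk$. Here $\psi$ is the continuous piecewise-linear hat function of a node in the interior of the edge $\{y_1=1\}$, supported away from $\{y_2=0\}$ and $\{y_2=1\}$.
\item Then $\vht$ has no interior jumps, so $2c_h^t(\bw;\vht,\vht)=\int_{\Omega(t)}\bw\cdot\nablax|\vht|^2\dx$.
\item Consequently, the left-hand side of \eqref{eq:first} minus its right-hand side equals $\int_{\partial\Omega(t)}(\bw\cdot\boldsymbol{n})|\vht|^2$.
\item On the moving edge $x_1=1+tx_2$ one has $\bw\cdot\boldsymbol{n}=x_2/\sqrt{1+t^2}>0$ and $\vht=\psi\,(t,1)^{\top}/(1+tx_2)\neq\bO$, which is tangential but nonzero.
\item On the other three edges $\bw\cdot\boldsymbol{n}=0$, so the discrepancy is strictly positive.
\end{itemize}

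The paper's proof has the same hole. It asserts \eqref{div-DG} by ``integration by parts and standard DG arguments'' and silently drops the boundary sum.

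What is actually provable is the following:
\begin{itemize}
\item \eqref{eq:first} with $+\sum_{F\in\Fhb}((\bw\cdot\nFt)\vvht,\vht)_{\Ft}$ added to the right-hand side.
\item \eqref{eq:second} with $+\frac12\int_{\In}\sum_{F\in\Fhb}((\bw\cdot\nFt)\vht,\vht)_{\Ft}\dt$ added correspondingly.
\end{itemize}
The statement as written holds only under an extra hypothesis such as $\bw\cdot\boldsymbol{n}=0$ on $\partial\Omega(t)$, which the paper's own tests violate. Alternatively, it holds after augmenting $c_h^t(\bw;\u,\v)$ by $-\frac12\sum_{F\in\Fhb}((\bw\cdot\nFt)\u,\v)_{\Ft}$; this keeps the scheme consistent because the exact velocity vanishes on $\partial\Omega(t)$. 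Once \eqref{eq:first} is corrected, your derivation of \eqref{eq:second} by taking $\vvht=\vht$ and integrating over $\In$ is fine.
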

\begin{proof}
    For each~$n \in \{1, \ldots, N\}$ and~$t \in \In$, integration by parts and standard DG arguments yield
    \begin{equation}\label{div-DG}
        \int_{\Omega(t)} (\divx  \bw) \vvht\cdot\vht  \dx 
        = - c_h^t(\bw; \vvht, \vht) - c_h^t(\bw; \vht, \vvht) \, .
    \end{equation}
    Writing the integral in~$\Omega(t)$ as a sum over all elements~$K \in \ThO$, then applying~\eqref{eq:cont:div} elementwise, and finally using~\eqref{div-DG}, we obtain
    $$
        \begin{aligned}
            \ddt \int_{\Omega(t)} \vvht \cdot \vht \dx
            & = \int_{\Omega_h(t)} \big(\vvht \cdot \Dt \vht
            + \vht \cdot \Dt \vvht\big) \dx
            - c_h^t(\bw; \vvht, \vht)  - c_h^t(\bw; \vht, \vvht).
        \end{aligned}
    $$
    The second identity~\eqref{eq:second} is an immediate consequence of~\eqref{eq:first} with $\vvht=\vht$, as integration in time over~$\In$ leads to
    \begin{alignat*}{3}
        \frac12 \Norm{\vht(\cdot, \tn^-)}{\L^2(\Omega(t_n))}^2 - \frac12 \Norm{\vht(\cdot, \tnmo^+)}{\L^2(\Omega(t_{n-1}))}^2  
        & = \frac12 \int_{\In} \Big[ \ddt  \int_{\Omega(t)} |\vht|^2 \dx \Big]\dt  \\
        & = \int_{\In} \Big[ \int_{\Omega_h(t)} \Dt \vht \cdot \vht  \dx
        -  c_h^t(\bw; \vht, \vht) 
        \Big] \dt \, .
    \end{alignat*}
\end{proof}
We are now in a position to prove the main result in this section. The following result shows the existence and uniqueness of a solution to problem \eqref{method} and its stability at the time nodes $\{ t_n \}_{n=0}^N$. 
\begin{theorem}[Well-posedness]\label{thm:well-pos}
    Under the assumptions of Lemma~\ref{lemma:coercivity-aht}, there exists a unique solution~$(\uht, \pht) \in \Vht \times \Mht$ to problem~\eqref{method}. Furthermore, such a solution satisfies the following nodal stability bound  for~$m \in \{ 1, 2, \ldots, N \}$:
    \begin{alignat}{3}
        \nonumber
            \frac{1}{2}\Norm{\uht(\cdot, t_m^-)}{\L^2(\Omega(t_m))}^2 & + \frac{1}{2}\sum_{n = 1}^{m-1} \Norm{\jump{\uht}_n}{\L^2(\Omega(t_n))}^2 + \frac{1}{4}\Norm{\uht(\cdot, 0)}{\L^2(\Omega_0)}^2  + \frac{1}{2} \sum_{n = 1}^{m}  \int_{\In} %
            \nu \|\uht (\cdot,t)\|_{1,h,t}^2
            \dt \\ 
            \label{eq:node.stab.bound}
            &\quad \lesssim 
            \frac{1}{\nu} \sum_{n = 1}^{m} \int_{\In} \Norm{\f (\cdot,t)}{\L^2(\Omega(t))}^2 \dt +  \Norm{\u_0}{\L^2(\Omega_0)}^2.
        \end{alignat}
\end{theorem}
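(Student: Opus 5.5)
The argument splits into an energy estimate on the discrete kernel, from which uniqueness and hence existence of $\uht$ follow, and a recovery of the pressure through the inf--sup condition.

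\textbf{Reduction to the kernel.} By Lemma~\ref{lem:compat}, any solution of \eqref{method} has $\uht \in \Zht$ and solves \eqref{method:Z}. Problem~\eqref{method:Z} is a square linear system on the finite-dimensional space $\Zht$. Moreover, the jump terms couple slabs only causally, so we can solve slab by slab, with $\uht(\cdot,\tnmo^-)$ entering as data. It therefore suffices to prove the stability bound \eqref{eq:node.stab.bound}: with $\f=\bO$ and $\u_0=\bO$ it forces $\uht=\bO$, giving uniqueness and hence existence of $\uht$.

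\textbf{Energy estimate.} I test \eqref{method:Z} with $\zht = \uht$ restricted to the slabs $1,\dots,m$ (extended by zero afterwards). This is admissible since $\Zht$ is defined slabwise. On each slab, the discrete Reynolds identity~\eqref{eq:second} gives
\begin{equation*}
\int_{\In}(\Dt\uht,\uht)_{\Omega_h(t)}\dt - \cht^{(n)}(\bw;\uht,\uht) = \tfrac12\Norm{\uht(\cdot,\tn^-)}{\L^2(\Omega(\tn))}^2-\tfrac12\Norm{\uht(\cdot,\tnmo^+)}{\L^2(\Omega(\tnmo))}^2 .
\end{equation*}
This is the key point: the convective form $\cht$ is exactly absorbed, so no Gronwall argument or time-step restriction is needed.

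The upwind jump terms are handled by the identity $(\jump{\uht}_n,\uht(\tn^+)) = \tfrac12\|\uht(\tn^+)\|^2-\tfrac12\|\uht(\tn^-)\|^2+\tfrac12\|\jump{\uht}_n\|^2$. The sum then telescopes to
\begin{equation*}
\tfrac12\|\uht(t_m^-)\|^2+\tfrac12\sum_{n=1}^{m-1}\|\jump{\uht}_n\|^2+\tfrac12\|\uht(0)\|^2+\nu\sum_{n\le m}\aht^{(n)}(\uht,\uht) .
\end{equation*}
The quantity $\tfrac12\|\uht(0)\|^2$ arises from the initial term $\|\uht(0)\|^2$ minus the $\tfrac12\|\uht(0^+)\|^2$ left over from the Reynolds identity on the first slab.

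Lemma~\ref{lemma:coercivity-aht} bounds the viscous part from below by $C_a\nu\int\|\uht\|_{1,h,t}^2$.

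\textbf{Right-hand side.} For the source term, Cauchy--Schwarz, the discrete Poincar\'e inequality (Lemma~\ref{lem:dpi}) and Young's inequality give
\begin{equation*}
(\f,\uht)_{Q} \le \tfrac{C}{\nu}\int\|\f\|^2 + \tfrac{C_a\nu}{2}\int\|\uht\|_{1,h,t}^2 ,
\end{equation*}
and the second term is absorbed on the left. For the initial datum, $(\u_0,\uht(0)) \le \|\u_0\|^2+\tfrac14\|\uht(0)\|^2$, which leaves exactly the factor $\tfrac14$ in the statement. After adjusting constants this yields \eqref{eq:node.stab.bound}.

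\textbf{Pressure and main obstacle.} Once $\uht$ is known, the first equation in \eqref{method} defines a functional on $\Vht$ that vanishes on $\Zht$. Working slab by slab, Proposition~\ref{prop:inf-sup} (via the standard closed-range and Babu\v{s}ka--Brezzi argument in finite dimensions) yields a unique $\pht\in\Mht$ representing this functional. I expect this to be the main obstacle, modest as it is. One must ensure the inf--sup is used on the correct slab space, and that the zero-mean condition in $\Mhk$ matches the range of $\divy$ on BDM fields with vanishing normal trace. Both points are already built into Lemma~\ref{lem:compat} and Proposition~\ref{prop:inf-sup}.
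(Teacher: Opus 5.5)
Your proposal is correct and follows the paper's proof. The nodal bound is derived exactly as in the paper: you test with $\uht$ truncated after slab $m$, absorb $\cht$ via the discrete Reynolds identity~\eqref{eq:second}, telescope with the jump identity, use coercivity of $\aht$, and close with Cauchy--Schwarz/Young and Lemma~\ref{lem:dpi}. For well-posedness, the paper applies slabwise mixed (Brezzi) theory, using coercivity of ${\mathbb A}_h^{(n)}$ and Proposition~\ref{prop:inf-sup}. You instead obtain injectivity, hence existence, of the velocity problem on $\Zht$ from the zero-data energy bound, and then recover $\pht$ by the annihilator/inf--sup argument; this is just the finite-dimensional unpacking of the same theory.
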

\begin{proof}
    The discrete linear problem \eqref{method} can be solved sequentially for $n=1,2,\ldots,N$. For each~$n$, the problem has the classical mixed form: find $(\uht^{(n)}, \pht^{(n)}) \in \Vht^{(n)} \times \Mht^{(n)}$ such that
    $$
        \left\{
        \begin{aligned}
            & {\mathbb A}_h^{(n)}(\uht^{(n)},\vht) + {\mathbb B}_h^{(n)}(\vht,\pht^{(n)}) =  (\f, \vht)_{\Qn} 
            + \big(\uht^{(n-1)}(\cdot,t_{n-1}^-), \vht(\cdot,t_{n-1}^+)\big)_{\Omega_{n-1}}
            & & \quad  \forall \vht \in \Vht^{(n)}, \\
            & {\mathbb B}_h^{(n)}(\uht^{(n)},\qht) = 0 & & \quad \forall \qht \in \Mht^{(n)},
        \end{aligned}
        \right.
    $$
    with the obvious right-hand side modification for $n=1$, and where  
    $$
        \begin{aligned}
            {\mathbb A}_h^{(n)}(\uht,\vht) := &
            \int_{\In} (\Dt \uht, \vht)_{\Omega_h(t)} \dt  
            + \big(\uht(\cdot, \tnmo^+),  \vht(\cdot, \tnmo^+) \big)_{\Omega(\tnmo)}  \\
            & - \cht^{(n)}(\bw; \uht, \vht)
            + \nu \aht^{(n)}(\uht, \vht)  \, , \\
            {\mathbb B}_h^{(n)}(\uht,\qht) := &  (\qht, \divx \uht)_{\Qn} \, . 
        \end{aligned}
    $$
    Using the discrete Reynolds identity~\eqref{eq:second} and the coercivity in~\eqref{DG-coerc} of the bilinear form~$\aht(\cdot, \cdot)$, we obtain 
    \begin{alignat*}{3}
        {\mathbb A}_h^{(n)}(\uht^{(n)},\uht^{(n)}) & = 
        \frac12 \Norm{\uht^{(n)}(\cdot, \tn^-)}{\L^2(\Omega(t_n))}^2 - \frac12 \Norm{\uht^{(n)}(\cdot, \tnmo^+)}{\L^2(\Omega(t_{n-1}))}^2 \\
        & \quad 
        + \big(\uht^{(n-1)}(\cdot,t_{n-1}^+), \uht^{(n-1)}(\cdot,t_{n-1}^+)\big)_{\Omega_{n-1}}  + \nu \int_{\In} \aht\big(\uht^{(n)},\uht^{(n)}\big) \dt \\
        &
        \geq \frac12 \Norm{\uht^{(n)}(\cdot, \tn^-)}{\L^2(\Omega(t_n))}^2 + 
        \frac12 \Norm{\uht^{(n)}(\cdot, \tnmo^+)}{\L^2(\Omega(t_n))}^2  + \nu C_a \int_{\In} %
         |\uht^{(n)}|_{1,h}^2
        \dt ,
    \end{alignat*}
    so that the bilinear form~${\mathbb A}_h^{(n)}$ is coercive on~$\Vht^{(n)}$. Moreover, the inf--sup stability of the spaces~$\Vht^{(n)} \times \Mht^{(n)}$ with respect to the form~${\mathbb B}_h^{(n)}$ has been shown in Proposition~\ref{prop:inf-sup}.
    As a consequence of theset two properties, standard theory of mixed methods \cite[\S4.2]{Boffi_Brezzi_Fortin:2013} yields existence and uniqueness of a solution. 
    
    Let $\uht \in \Vht$ be the solution to~\eqref{method:Z}, and let~$m \in \{ 1, \ldots, N \}$ be fixed. We define $\vht \in \Vht$ as
    \begin{equation*}
        \vht {}|_{\Qj} := 
        \begin{cases}
            \uht & \text{ if~$j  \in \{1, \ldots, m\}$}, \\
            \bO & \text{ otherwise}.
        \end{cases}
    \end{equation*}
    Taking this $\vht$ as a test function in~\eqref{method:Z}, we get
    \begin{equation}\label{eq:node.stab.bound.initial}
        \begin{aligned}
            & \sum_{n = 1}^{m} \int_{\In} (\Dt \uht, \uht)_{\Omega_h(t)} \dt  + \sum_{n = 1}^{m - 1} \big(\jump{\uht}_n, \uht(\cdot, \tn^+) \big)_{\Omega(\tn)}  + \Norm{\uht(\cdot,0)}{\L^2(\Omega_0)}^2\\
            & \quad - \sum_{n = 1}^{m} \cht^{(n)}(\bw; \uht, \uht)
            + \nu \sum_{n = 1}^{m} \aht^{(n)}(\uht, \uht) = \sum_{n = 1}^{m} \int_{\In} (\f, \uht)_{\Omega_h(t)} \dt + (\u_0, \uht(\cdot, 0))_{\Omega_0}. 
        \end{aligned}
    \end{equation}
    We denote by $\mathrm{LHS}$ and~$\mathrm{RHS}$ the left- and right-hand sides of the above equation, respectively.
    For the first and fourth terms in LHS, using the discrete Reynolds identity~\eqref{eq:second} and recalling the definition of $\cht(\cdot; \cdot, \cdot)$, we obtain
    \begin{equation}\label{eq:node.stab.bound.lhs.term1}
        \begin{aligned}
            \sum_{n = 1}^{m} \int_{\In} (\Dt \uht, \uht)_{\Omega_h(t)} \dt & - \cht(\bw; \uht, \uht) \\
            &= \frac{1}{2}\sum_{n = 1}^{m} 
             \big( \Norm{\uht(\cdot, \tn^-)}{\L^2(\Omega(t_n))}^2 - \Norm{\uht(\cdot, \tnmo^+)}{\L^2(\Omega(\tnmo))}^2 
            \big).
         \end{aligned}
    \end{equation}
    For the second term in~$\mathrm{LHS}$, recalling the definition in~\eqref{eq:time-jump} of the time-jump operator $\jump{\cdot}_n$, and using the relation $(\boldsymbol{a}- \boldsymbol{b})\cdot \boldsymbol{a} =\frac{1}{2} (|\boldsymbol{a}|^2 + |\boldsymbol{a}-\boldsymbol{b}|^2 - |\boldsymbol{b}|^2)$ with $\boldsymbol{a}=\uht (\cdot,\tn^+)$ and $\boldsymbol{b}=\uht (\cdot,\tn^-)$, we have 
    \begin{equation}\label{eq:node.stab.bound.lhs.term2}
        \begin{aligned}
            \sum_{n = 1}^{m - 1} \big(\jump{\uht}_n, \uht(\cdot, \tn^+) \big)_{\Omega(\tn)} 
            &= \sum_{n = 1}^{m-1} \frac{1}{2} \left[ 
            \Norm{\uht(\cdot, \tn^+)}{\L^2(\Omega(t_n))}^2 + \Norm{\jump{\uht}_n}{\L^2(\Omega(t_n))}^2 - \Norm{\uht(\cdot, \tn^-)}{\L^2(\Omega(\tn))}^2
            \right].
        \end{aligned}
    \end{equation}
    As for the last term in $\mathrm{LHS}$, we use the coercivity in~\eqref{DG-coerc} of~$\aht(\cdot, \cdot)$ to obtain
    \begin{equation}\label{eq:node.stab.bound.lhs.term3}
        \nu \aht(\uht, \uht) \geq \sum_{n = 1}^{m} \nu C_a \int_{\In} \|\uht (\cdot,t)\|_{1,h,t}^2 \dt.
    \end{equation}
    Combining~\eqref{eq:node.stab.bound.lhs.term1}, \eqref{eq:node.stab.bound.lhs.term2}, and~\eqref{eq:node.stab.bound.lhs.term3} with identity~\eqref{eq:node.stab.bound.initial} yields
    \begin{equation}\label{eq:node.stab.bound.lhs.bound}
        \begin{aligned}
            \mathrm{LHS} &\geq \frac{1}{2}\bigg[\Norm{\uht(\cdot, t_m^-)}{\L^2(\Omega(t_m))}^2 +\sum_{n = 1}^{m-1} \Norm{\jump{\uht}_n}{\L^2(\Omega(t_n))}^2 + \Norm{\uht(\cdot, 0)}{\L^2(\Omega_0)}^2\bigg] \\ 
            &\quad + \sum_{n = 1}^{m} \int_{\In} %
            \nu C_a \|\uht (\cdot,t)\|_{1,h,t}^2 %
            \dt.
        \end{aligned}
    \end{equation}
    Applying the Cauchy--Schwarz and the Young inequalities to the~$\mathrm{RHS}$ of \eqref{eq:node.stab.bound.initial}, along with Lemma~\ref{lem:dpi}, we obtain
    \begin{equation}\label{eq:stab.bound.rhs.upper.bound}
        \begin{split}
            \mathrm{RHS} & \leq \frac{1}{2 \varepsilon \nu} \sum_{n = 1}^{m} \int_{\In} \Norm{\f (\cdot,t)}{\L^2(\Omega(t))}^2 \dt +  \varepsilon \frac{\nu \CPF}{2} \sum_{n = 1}^{m} \int_{\In} \|\uht (\cdot,t)\|_{1,h,t}^2 \dt \\
            & \quad + \Norm{\u_0}{\L^2(\Omega_0)}^2 + \frac{1}{4}\Norm{\uht(\cdot, 0)}{\L^2(\Omega_0)}^2,
        \end{split}
    \end{equation}
    where we denoted by~$\CPF$ the constant in Lemma \ref{lem:dpi}.
    From~\eqref{eq:node.stab.bound.lhs.bound} and~\eqref{eq:stab.bound.rhs.upper.bound}, straightforward algebraic manipulations and choosing $\varepsilon = C_a/\CPF > 0$ easily give
    \begin{alignat}{3}
        \nonumber
        &\frac{1}{2}\Norm{\uht(\cdot, t_m^-)}{\L^2(\Omega(t_m))}^2 + \frac{1}{2}\sum_{n = 1}^{m-1} \Norm{\jump{\uht}_n}{\L^2(\Omega(t_n))}^2 + \frac{1}{4}\Norm{\uht(\cdot, 0)}{\L^2(\Omega_0)}^2 + \frac{1}{2} \sum_{n = 1}^{m}  \int_{\In} \nu \|\uht (\cdot,t)\|_{1,h,t}^2 
        \dt\\ 
        \label{eq:aux-LHS-RHS}
        &\quad \leq
        C \bigg[ 
        \frac{1}{\nu} \sum_{n = 1}^{m} \int_{\In} \Norm{\f (\cdot,t)}{\L^2(\Omega(t))}^2 \dt + \Norm{\u_0}{\L^2(\Omega_0)}^2\bigg],
    \end{alignat}
    with $C \coloneqq \max\{\CPF/(2 C_a), \, 1 \} / \min \{ 1,\, C_a \}$. 
\end{proof}
\begin{remark}[Pressure-robust stability bound]
    Thanks to the property in Lemma \ref{lem:compat}, which implies that the discrete kernel space~$\Zht$ is contained in the continuous one, the forcing~$\f$ on the right-hand side of bound~\eqref{eq:node.stab.bound} could be substituted by its Helmholtz--Hodge projection, underlining the pressure-robustness of the scheme. We refer to~\cite{john2017divergence} for more details and an extended discussion on the relevance of such a property.
    \eremk
\end{remark}
\section{Convergence analysis}\label{sec:5}
We now focus on the derivation of a priori estimates for the error in the energy norm.
\subsection{Space--time projection}
We recall the definition and properties of some auxiliary operators. We start with the Thom\'ee projection in time~$\Pt$ (see~\cite[eq.~(12.9) in Ch.~12]{Thomee:2006}), which is a classical tool in the analysis of DG time discretizations. 
\begin{definition}[The Thom\'ee projection~$\Pt$] \label{def:Thomee-projection}
    For~$\ell \in \N$ and a Hilbert space~$\calH$ with inner product~$(\cdot, \cdot)_{\calH}$, the projection operator~$\Pt: H^1(0, T; \calH) \to \calH \otimes \Pp{\ell}{\Tt}$ is defined for any~$v \in H^1(0, T; \calH)$ as follows: for~$n = 1, \ldots, N$,
    \begin{subequations}
        \begin{alignat*}{3}
            \Pt v(\tn^-) & = v(\tn), \\
            \int_{\In} (\Pt v, w_{\ell-1})_{\calH} \dt & = \int_{\In} (v, w_{\ell-1})_{\calH} \dt & & \quad \forall w_{\ell - 1} \in \calH \otimes \Pp{\ell-1}{\Tt}. 
        \end{alignat*}
    \end{subequations}
\end{definition}
The stability of~$\Pt$ in the~$L^{\infty}(0, T; \calH)$ norm yields the following approximation estimates (see~\cite[\S69.3.2]{Ern_Guermond:2021} and~\cite[Lemma~4.2]{Beirao-Gomez-Dassi:2025}).
\begin{lemma}[Estimates for~$\Pt$]\label{lemma:estimates-Pt}
    Let~$r \in [1, \infty]$, and~$\calH$ a Hilbert space with inner product~$(\cdot, \cdot)_{\calH}$. Then, for~$n = 1, \ldots, N$, the following estimate holds:
    \begin{equation*}
        \Norm{\Pt v - v}{L^r(\In; \calH)} \lesssim \tau_n^s\Norm{\dpt^{(s)} v}{L^r(\In; \calH)} \qquad \forall v \in W^{s,r}(\In; \calH), \ 1 \le s \le \ell+1.
    \end{equation*}
\end{lemma}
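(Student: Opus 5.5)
The estimate is local to a single interval $\In$, so I would work on one time slab and prove it in three steps: exactness of $\Pt$ on polynomials, uniform $L^\infty(\In;\calH)$-stability of $\Pt$, and a Taylor-type approximation bound. The first two steps give a Lebesgue-lemma argument, and the third supplies the rate.

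\textbf{Step 1 (exactness).} If $v|_{\In}\in\calH\otimes\Pp{\ell}{\In}$, then $\Pt v=v$ on $\In$. Indeed, $\Pt v - v$ is a polynomial of degree $\ell$ in time. By the second condition in Definition~\ref{def:Thomee-projection} it is $L^2(\In;\calH)$-orthogonal to $\calH\otimes\Pp{\ell-1}{\In}$, so it equals $c\,L_\ell(t)$, with $L_\ell$ the Legendre polynomial of degree $\ell$ mapped to $\In$ and $c\in\calH$. The first condition forces it to vanish at $\tn^-$, and since $L_\ell(\tn)\neq 0$ we get $c=0$.

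\textbf{Step 2 (stability).} I would show
\[
\Norm{\Pt v}{L^\infty(\In;\calH)}\lesssim \Norm{v}{L^\infty(\In;\calH)}
\]
with a constant depending only on $\ell$. By an affine change of variables to $(-1,1)$, it suffices to work on the reference interval, which makes the constant independent of $\tau_n$. Write $\Pt v=\sum_{j=0}^{\ell} c_j L_j$. For $j<\ell$, the coefficient $c_j$ is the $L^2$-projection coefficient of $v$, bounded by $\Norm{v}{L^\infty}$. The top coefficient is fixed by the endpoint condition,
\[
c_\ell=\Big(v(\tn)-\sum_{j<\ell}c_jL_j(1)\Big)/L_\ell(1),
\]
and is bounded the same way. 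Finite-dimensionality then gives the $L^\infty$ bound, and on $\In$ we have $\Norm{\cdot}{L^r}\le\tau_n^{1/r}\Norm{\cdot}{L^\infty}$.

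\textbf{Step 3 (approximation).} The main obstacle is that the nodal condition is not $L^r$-stable for $r<\infty$. I would therefore not seek an $L^r\to L^r$ bound. Instead, let $p$ be the averaged Taylor polynomial of degree $s-1\le\ell$ of $v$ on $\In$. Step 1 gives
\[
\Pt v-v=\Pt(v-p)-(v-p).
\]
Step 2 controls this by $\Norm{v-p}{L^\infty(\In;\calH)}$ times $\tau_n^{1/r}$. The Bochner-valued Taylor remainder estimate gives
\[
\Norm{v-p}{L^\infty(\In;\calH)}\lesssim \tau_n^{s-1}\Norm{\dpt^{(s)}v}{L^1(\In;\calH)}\le \tau_n^{s-1/r}\Norm{\dpt^{(s)}v}{L^r(\In;\calH)},
\]
where the last step is Hölder's inequality. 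This uses $s\ge1$, so $W^{s,r}$ embeds in continuous functions and the nodal value is well defined. Multiplying by $\tau_n^{1/r}$ yields $\tau_n^s\Norm{\dpt^{(s)}v}{L^r(\In;\calH)}$, which proves the claim.
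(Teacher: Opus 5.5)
Your proposal is correct and follows essentially the route the paper indicates. The paper does not prove this lemma itself; it attributes the estimate to the $L^\infty(\In;\calH)$-stability of $\Pt$, citing Ern--Guermond and Beir\~ao da Veiga--G\'omez--Dassi. Your argument (polynomial invariance, $L^\infty$-stability on the reference interval, then a Taylor remainder bound with H\"older's inequality) is the standard proof behind those citations. You correctly avoid any $L^r\to L^r$ stability, which fails because of the nodal condition, by working in $L^\infty$ and recovering the factor $\tau_n^{1/r}$ from the interval length.
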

Moreover, we denote by~$\IBDM : \H^1(\Omega_0) \to \Vhk$ the standard BDM interpolant (see~\cite[\S2.5.2]{Boffi_Brezzi_Fortin:2013}) in the reference domain~$\Omega_0$, which, with a slight abuse of notation, we assume to act on the spatial variable in the space--time setting. Denoting by~$\PihM : L^1(\Omega_0) \to \Mhk$, the standard $L^2(\Omega_0)$-orthogonal projection, we recall the well-known commutativity property of~$\IBDM$ (see, e.g., \cite[\S2.5.6]{Boffi_Brezzi_Fortin:2013}).
\begin{lemma}[Commutativity of~$\IBDM$]\label{lemma:commutativity}
    The following identity holds:
    \begin{equation*}
        \divy \IBDM \hv = \PihM \divy \hv \qquad \forall \hv \in \H^1(\Omega_0).
    \end{equation*}
\end{lemma}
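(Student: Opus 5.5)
The plan is the classical argument of Fortin-type commuting interpolants, carried out elementwise on the reference mesh $\ThO$. I write $\PihM$ for the $L^2(\Omega_0)$ projection onto discontinuous piecewise polynomials of degree $k-1$, which acts elementwise. The identity $\divy \IBDM \hv = \PihM \divy \hv$ then follows once I show that, for every $K \in \ThO$ and every $q \in \Pp{k-1}{K}$,
\begin{equation*}
    \int_K \big(\divy \IBDM \hv - \divy \hv\big) q \dy = 0 ,
\end{equation*}
together with the fact that $\divy \IBDM \hv|_K \in \Pp{k-1}{K}$. The latter holds because $\Vhk|_K = \Pp{k}{K}^d$, so its divergence lies in $\Pp{k-1}{K}$.

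The key step is an integration by parts on $K$:
\begin{equation*}
    \int_K \divy(\IBDM\hv - \hv)\, q \dy = -\int_K (\IBDM\hv - \hv)\cdot \Nablay q \dy + \int_{\partial K} (\IBDM\hv - \hv)\cdot \boldsymbol{n}_K\, q \,\mathrm{d}s .
\end{equation*}
I will then invoke the degrees of freedom that define the BDM interpolant (cf.~\cite[\S2.5.2]{Boffi_Brezzi_Fortin:2013}):
\begin{itemize}
    \item The facet moments $\int_F (\IBDM\hv - \hv)\cdot \nF\, r \,\mathrm{d}s = 0$ hold for all $r \in \Pp{k}{F}$. Since $q|_F \in \Pp{k-1}{F} \subset \Pp{k}{F}$, the boundary term vanishes.
    \item The interior moments against the Nédélec-type space $\boldsymbol{N}_{k-1}(K)$ vanish, and this space contains $\Nablay \Pp{k-1}{K}$. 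Hence the volume term vanishes as well.
\end{itemize}
This gives the orthogonality, and because both sides belong to the same elementwise polynomial space, the identity follows.

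Two checks remain.
\begin{itemize}
    \item The interpolant is well defined for $\hv \in \H^1(\Omega_0)$, since the traces on facets lie in $L^2$, so the facet moments make sense.
    \item Compatibility with the zero-mean constraint in $\Mhk$. If $\hv\cdot\boldsymbol{n}=0$ on $\partial\Omega_0$ (the setting in which $\IBDM$ maps into $\Vhk$), then $\int_{\Omega_0}\divy\hv = 0$. In that case $\PihM\divy\hv$ has zero mean, and so does $\divy\IBDM\hv$ by the divergence theorem, since the boundary normal moments vanish.
\end{itemize}

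The only delicate point I anticipate is this mean-zero bookkeeping, that is, interpreting $\PihM$ onto $\Mhk$ consistently with the boundary condition. The rest is the standard commuting-diagram computation.
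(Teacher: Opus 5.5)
Your proposal is correct and is the standard commuting-diagram argument (integration by parts on each $K$, with the facet and interior BDM moments killing both terms), which is exactly what the paper relies on: it gives no proof of its own and cites Boffi--Brezzi--Fortin, \S2.5.6. Your mean-zero remark is also right and worth keeping: with $\PihM$ projecting onto the zero-mean space, the identity needs $\int_{\Omega_0}\divy\hv \dy = 0$, for instance vanishing normal trace, which is also what makes $\IBDM\hv \in \Vhk$; the paper leaves this implicit when it writes ``for all $\hv\in\H^1(\Omega_0)$''.
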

The next lemma concerns the approximation properties of the interpolant~$\IBDM$ (see, e.g., \cite[Prop. 2.5.1]{Boffi_Brezzi_Fortin:2013}).
\begin{lemma}[Estimates for~$\IBDM$]\label{lemma:estimates-IBDM}
    For all~$K \in \ThO$, $1 \le m \le k+1$, and~$\v \in \H^m(K)$, it holds:
    \begin{equation*}
        \Norm{\IBDM \v - \v}{\L^2(K)} + \hK\Norm{\Nablay (\IBDM \v - \v) }{\L^2(K)} \lesssim \hK^{m} \semiNorm{\v}{\H^{m}(K)}.
    \end{equation*}
\end{lemma}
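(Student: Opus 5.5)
The estimate is local to each element $K \in \ThO$, and I would prove it by the classical route of a polynomial-preserving linear interpolant combined with a Bramble--Hilbert argument. The case $m = k+1$ is the main one; the cases $1 \le m \le k$ follow by the same argument with a lower-degree Taylor polynomial, since $\mathbb{P}^{m-1} \subset \mathbb{P}^k$ is also reproduced.

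\textbf{Step 1: definition and invariance.} I recall that $\IBDM$ is defined elementwise through the BDM degrees of freedom. These are the normal moments $\int_F (\v\cdot\nF)\, q \,\mathrm{d}s$ for $q \in \mathbb{P}^k(F)$ on each facet $F$ of $K$, together with the interior moments against $\nabla \mathbb{P}^{k-1}(K)$ and against the Nédélec-type complement. These functionals are well defined and continuous on $\H^1(K)$, because the normal trace lies in $L^2(\partial K)$ by the trace theorem. Since the degrees of freedom are unisolvent on $\mathbb{P}^k(K)^d$, the interpolant reproduces polynomials: $\IBDM \boldsymbol{p} = \boldsymbol{p}$ for all $\boldsymbol{p} \in \mathbb{P}^k(K)^d$.

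\textbf{Step 2: transfer to the reference element.} Let $F_K(\hat y) = B_K \hat y + b_K$ be the affine map from the reference simplex $\widehat K$ to $K$. The BDM interpolant commutes with the affine contravariant Piola map $\hat\v \mapsto |\det B_K|^{-1} B_K \hat\v$, so that $\widehat{\IBDM\v} = \widehat{I}\,\hat\v$ with a fixed operator $\widehat{I}$ on $\widehat K$. This commutation holds because facet normal moments and interior moments transform into one another under the Piola map. The operator $\widehat{I}$ is bounded from $\H^1(\widehat K)$ to $\H^1(\widehat K)$, since its range is finite dimensional and the degrees of freedom are continuous on $\H^1(\widehat K)$. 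Combining this boundedness with the polynomial invariance of Step 1 and the Bramble--Hilbert lemma gives
\[
\|\hat\v - \widehat I \hat\v\|_{\H^1(\widehat K)} \lesssim |\hat\v|_{\H^{m}(\widehat K)}.
\]

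\textbf{Step 3: scaling back.} I then map back to $K$ using shape regularity, namely $\|B_K\| \lesssim \hK$, $\|B_K^{-1}\| \lesssim \hK^{-1}$ and $|\det B_K| \simeq \hK^d$. The Piola factor $|\det B_K|^{-1} B_K$ appears both in the error and in $|\hat\v|_{\H^m}$ after scaling, so it cancels up to constants. What remains is the standard powers: $\hK^{m}$ for the $\L^2$ error and $\hK^{m-1}$ for the gradient error. Multiplying the gradient bound by $\hK$ yields the claimed estimate.

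The main obstacle is the $\H^1$-level bookkeeping in Step 3. One has to check that the Piola scaling produces exactly matching powers of $\hK$ in the $\L^2$ term and in the gradient term. One also has to verify that the continuity constant of $\widehat{I}$ on $\H^1(\widehat K)$ depends only on $d$ and $k$. Both points follow because the reference element is fixed and shape regularity bounds the condition number of $B_K$.
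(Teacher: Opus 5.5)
Your proposal is correct and follows the standard route: polynomial reproduction, pull-back to the reference simplex via the affine contravariant Piola map, Bramble--Hilbert on $\widehat{K}$, and scaling under shape regularity. This is the argument behind \cite[Prop.~2.5.1]{Boffi_Brezzi_Fortin:2013}, which the paper cites in place of giving its own proof. One small caveat: exact commutation with the Piola map in your Step~2 relies on your choice of N\'ed\'elec-type interior moments, which transform covariantly. If the interior degrees of freedom are instead moments against divergence-free bubbles, the pulled-back interpolant depends mildly on $K$, but it is still uniformly bounded on $\H^1(\widehat{K})$ in terms of the shape-regularity constant, so the same estimate follows.
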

Finally, we define the space--time projection~$\Piht$ in the discrete space~$\Vht$.
\begin{definition}[Space--time interpolation operator $\Piht$]\label{def:space-time-interpolant}
    For any function~$\v \in H^1(\H^1; \QT)$, we define its space--time interpolation~$\Piht \v \in \Vht$ as~$\Piht \v = \phi \big((\Pt \circ \IBDM)\hv \big)$.
\end{definition}
Due to Lemma~\ref{lemma:div-preservation} and the commutativity property in Lemma~\ref{lemma:commutativity}, for all~$\v \in H^1(\H^1; \QT)$ and for a.e. $(\bx, t) = (\At(\by), t) \in \QT$, it holds
\begin{equation*}
    \begin{split}
        \divx \Piht \v(\bx, t) 
        = \frac{1}{\det J_t(\by)} \Big[ \divy \big( (\Pt \circ \IBDM) \hv \big) \Big] (\by, t) 
        = \frac{1}{\det J_t(\by)} \Big[ \Pt \big( \PihM (\divy \hv) \big) \Big] (\by, t).
    \end{split}
\end{equation*}
Using again Lemma~\ref{lemma:div-preservation}, if~$\divx \v = 0$, then~$\divy \widehat{\v} = 0$ and~$\Piht \v \in \Zht$.

We conclude with an inverse estimate for functions in~$\Vht$.
\begin{lemma}[Inverse estimate]\label{lemma:inverse-estimate}
    For all~$\vht \in \Vht$ and~$n \in \{1, \ldots, N\}$, it holds
    \begin{equation*}
        \Norm{\Dt \vht}{L^2(\L^2; \Qn)} \lesssim ( \tau_n^{-1} + 1) \Norm{\vht}{L^2(\L^2; \Qn)}.
    \end{equation*}
\end{lemma}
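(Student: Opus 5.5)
The plan is to use Lemma~\ref{lemma:Dt-phi} to split $\Dt\vht$ into a Piola-mapped time derivative plus a zeroth-order term, and then to apply a polynomial inverse inequality in time on the reference side. Write $\vht|_{\Qn}=\phi\hv_{h\tau}$ with $\hv_{h\tau}\in\Vhk\otimes\Pp{\ell}{\In}$. Lemma~\ref{lemma:Dt-phi} gives
\begin{equation*}
\Dt\vht=\phi(\dpt\hv_{h\tau})+(\Nablax\bw-\divx\bw\,\mathbb{I}_d)\,\vht \qquad\text{in } \Qn .
\end{equation*}
By Remark~\ref{rem:assumptions-At}, $\bw\in L^\infty(\boldsymbol{W}^{1,\infty};\QT)$, so the matrix factor is bounded in $L^\infty(\Qn)$. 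Hence the second term is bounded in $L^2(\L^2;\Qn)$ by a constant times $\Norm{\vht}{L^2(\L^2;\Qn)}$. This accounts for the ``$+1$'' in the estimate.

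For the first term, I apply the norm equivalence~\eqref{eq:equiv-vh-3} at each fixed $t\in\In$. Its constants are uniform in $t$ because of Assumption~\ref{asm:At}. This gives
\begin{equation*}
\int_{\In}\Norm{\phi_t\dpt\hv_{h\tau}(\cdot,t)}{\L^2(\Omega(t))}^2\dt\simeq\int_{\In}\Norm{\dpt\hv_{h\tau}(\cdot,t)}{\L^2(\Omega_0)}^2\dt ,
\end{equation*}
so the problem moves to the tensor-product space $\Vhk\otimes\Pp{\ell}{\In}$.

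There I use the standard one-dimensional inverse estimate for polynomials of degree $\ell$ on an interval of length $\tau_n$, valid for values in the Hilbert space $\L^2(\Omega_0)$:
\begin{equation*}
\Norm{\dpt\hv_{h\tau}}{L^2(\In;\L^2(\Omega_0))}\lesssim\tau_n^{-1}\Norm{\hv_{h\tau}}{L^2(\In;\L^2(\Omega_0))} .
\end{equation*}
To prove it, expand $\hv_{h\tau}$ in an $\L^2(\Omega_0)$-orthonormal basis, or equivalently in the Legendre basis~\eqref{eq:orthogonality-Leg} with coefficients in $\Vhk$. Then apply the scalar estimate coefficientwise, noting that its constant depends only on $\ell$, and sum. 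Applying~\eqref{eq:equiv-vh-3} once more returns $\Norm{\hv_{h\tau}}{L^2(\In;\L^2(\Omega_0))}\simeq\Norm{\vht}{L^2(\L^2;\Qn)}$. Combining the two contributions with the triangle inequality yields the claim.

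The only delicate point is making sure the equivalence constants in~\eqref{eq:equiv-vh-3} are uniform in $t$ and independent of $h$ and $\tau_n$. I expect this to hold because $\det J_t\ge\mu_\star$ and $J_t$, $J_t^{-1}$ are uniformly bounded by Assumption~\ref{asm:At} and Remark~\ref{rem:assumptions-At}. One also needs $\Dt$ to be taken elementwise, which is harmless here since everything is done pointwise in $\by$.
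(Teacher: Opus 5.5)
Your proposal is correct and follows essentially the same route as the paper's proof. You split $\Dt\vht$ via Lemma~\ref{lemma:Dt-phi}, bound the zeroth-order term using the $L^\infty$ regularity of $\bw$, and transfer the first term to $\Omega_0$ with \eqref{eq:equiv-vh-3}. You then apply the polynomial inverse estimate in time on $\Vhk\otimes\Pp{\ell}{\In}$ and map back with \eqref{eq:equiv-vh-3}; your justification of the vector-valued inverse estimate through an $\L^2(\Omega_0)$-orthonormal basis is a valid detail that the paper leaves implicit.
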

\begin{proof}
Let~$\vht \in \Vht$. Using the triangle inequality and the identity in Lemma~\ref{lemma:Dt-phi}, we have
\begin{equation*}
    \Norm{\Dt \vht}{L^2(\L^2; \Qn)} \le \Norm{\phi (\dpt \hv_{h\tau})}{L^2(\L^2; \Qn)} + \Norm{\nablax \bw - \divx \bw \mathbb{I}_d}{\L^{\infty}(\QT)} \Norm{ \phi \hv_{h\tau}}{L^2(\L^2; \Qn)}.
\end{equation*}
Using~\eqref{eq:equiv-vh-3}, the regularity of $\bw$, and the polynomial inverse estimate for functions in~$\Vhk \otimes \Pp{\ell}{\In}$, we get
\begin{equation*}
    \Norm{\Dt \vht}{L^2(\L^2; \Qn)} \lesssim \tau_n^{-1} \Norm{\hv_{h\tau}}{L^2(\In; \L^2(\Omega_0))} + \Norm{\vht}{L^2(\L^2; \Qn)} \lesssim (\tau_n^{-1} + 1) \Norm{\vht}{L^2(\L^2; \Qn)},
\end{equation*}
where, in the last step, we have used again~\eqref{eq:equiv-vh-3}. 
\end{proof}
\subsection{A priori error estimates}
We introduce the tensor-valued field~$\Theta : \Omega \times [0, T] \to \R^{d \times d}$, defined as
\begin{equation}\label{eq:theta}
    \Theta(\by,t) := (\det J_t(\by))^{-1} J_t(\by)^{\top} \; J_t(\by) \, \qquad \forall (\by, t) \in \Omega_0 \times [0, T],
\end{equation}
which is clearly symmetric and, due to Assumption~\ref{asm:At} on the regularity of the ALE map~$\A$, also uniformly strictly positive definite.

Let~$\u$ be the continuous weak solution to~\eqref{eq:model-problem-ALE}, which we assume to satisfy $\widehat{\u} \in H^1(0,T;\H^1(\Omega_0))$ and $\widehat{\u} \in L^2(0,T; \H^{s}(\Omega_0))$ with $s > 3/2$.
Furthermore, let~$\uht \in \Zht$ be the solution to the discrete space--time formulation~\eqref{method:Z}. 
We define the following error functions:
\begin{equation*}\label{eq:error-split}
    \eu := \u - \uht = (\u - \Piht \u) + (\Piht \u - \uht) =: \epi + \Piht \eu. 
\end{equation*}
Due to the consistency of~\eqref{method:Z}, the following error equation holds:
\begin{equation}\label{eq:error-equation}
    \Bht(\Piht \eu, \zht) =  - \Bht(\epi, \zht) \quad \forall \zht \in \Zht. 
\end{equation}
\paragraph{Simplified error equation.}
We now use the properties of the space--time projection~$\Piht$ to simplify the error equation~\eqref{eq:error-equation}.
Using the discrete Reynolds identity in~\eqref{eq:first} and the identity
\begin{equation*}
    \jump{\u \cdot \v}_n = \jump{\u}_n \cdot \v(\tn^+) + \u(\tn^-) \cdot \jump{\v}_n, \qquad \text{for~$n = 1, \ldots, N - 1$},
\end{equation*}
we obtain
\begin{alignat}{3}
    \nonumber
    &\sum_{n = 1}^N \int_{\In}  (\Dt (\Piht \u - \u), \zht)_{\Omega_h(t)} \dt  + \sum_{n = 1}^{N - 1} \big(\jump{\Piht \u - \u}_n, \zht(\cdot, \tn^+)\big)_{\Omega(\tn)} + \big((\Piht \u - \u)(\cdot, 0), \zht(\cdot, 0)\big)_{\Omega_0} \\
    \nonumber
    & = \big((\Piht \u - \u)(\cdot, T), \zht(\cdot, T)\big)_{\Omega(T)} - \sum_{n = 1}^{N - 1} \int_{\Omega(\tn)} \Big( \jump{(\Piht \u - \u) \cdot \zht}_n - \jump{\Piht \u - \u}_n \cdot \zht(\cdot, \tn^+) \Big)\dx \\
    \nonumber
    & \quad - \sum_{n = 1}^N \int_{\In} \big(\Piht \u - \u,  \Dt \zht \big)_{\Omega_h(t)} \dt + \cht(\bw; \Piht \u - \u, \zht) + \cht(\bw; \zht, \Piht \u - \u) \\
    \nonumber
    & = - \big((\Piht \u - \u)(\cdot, T), \zht(\cdot, T)\big)_{\Omega(T)} - \sum_{n = 1}^{N - 1} \big((\Piht \u - \u)(\cdot, \tn^-), \jump{\zht}_n\big)_{\Omega(\tn)} \\
    \nonumber
    & \quad - \sum_{n = 1}^N \int_{\In} \big(\Piht \u - \u, \Dt \zht \big)_{\Omega_h(t)} \dt + \cht(\bw; \Piht \u - \u, \zht) + \cht(\bw; \zht, \Piht \u - \u).
\end{alignat}
Then, using the interpolation property of~$\Pt$ and adding and subtracting suitable terms, the right-hand side of~\eqref{eq:error-equation} can be simplified as follows:
\begin{alignat}{3}
    \nonumber
    -\Bht(\epi, \zht) & = -\big(\phi(\IBDM \hu - \hu)(\cdot, T), \zht(\cdot, T)\big)_{\Omega(T)} - \sum_{n = 1}^{N - 1} \big(\phi(\IBDM \hu - \hu)(\cdot, \tn^-), \jump{\zht}_n\big)_{\Omega(\tn)} \\
    \nonumber
    & \quad - \sum_{n = 1}^N \int_{\In} \big(\phi\big(\IBDM\hu - \hu), \Dt \zht\big)_{\Omega_h(t)} \dt \\
    \nonumber
    & \quad - \sum_{n = 1}^N \int_{\In} \big(\phi\big((\Pt \circ \IBDM)\hu - \IBDM\hu), \Dt \zht\big)_{\Omega_h(t)} \dt \\
    \label{eq:aux-error-equation}
    & \quad - \nu \aht(\Piht \u - \u, \zht) + \cht(\bw; \zht, \Piht \u - \u).
\end{alignat}
Finally, we can use the continuity in time of~$\u$ and the discrete Reynolds identity in~\eqref{eq:first} to simplify the first three terms on the right-hand side of~\eqref{eq:aux-error-equation}. The error equation~\eqref{eq:error-equation} reduces to
\begin{equation}\label{eq:error.equation.final}
    \begin{aligned}
        \Bht(\Piht \eu, \zht) & = \sum_{n = 1}^N \int_{\In} \big(\Dt \phi(\IBDM \hu - \hu), \zht \big)_{\Omega_h(t)} \dt + (\IBDM \u_0 - \u_0, \zht(\cdot, 0))_{\Omega_0} \\
        & \quad - \sum_{n = 1}^N \int_{\In} \big(\phi\big((\Pt \circ \IBDM)\hu - \IBDM\hu), \Dt \zht\big)_{\Omega_h(t)} \dt \\
        & \quad - \cht\big(\bw; \phi(\IBDM \hu - \hu), \zht\big) + \cht\big(\bw; \zht, \phi((\Pt \circ \IBDM) \hu - \IBDM \hu)\big) \\
        & \quad - \nu \aht(\Piht \u - \u, \zht) \\
        & =: M_1(\zht) + M_2(\zht) + M_3(\zht) + M_4(\zht) + M_5(\zht) + M_6(\zht).
    \end{aligned}
\end{equation}
\paragraph{Error estimates.}
We are now in a position to prove the main result in this section.
\begin{theorem}[Error estimate for the discrete error]\label{theo:main:conv}
    Let~$\u$ be the continuous weak solution to~\eqref{eq:model-problem-ALE}, which we assume to satisfy~$\hu \in H^{\ell+1}(0, T; \H^2(\Omega_0)) \cap H^1(0, T; \H^{k+1}(\Omega_0))$. 
    Let also~$\uht \in \Zht$ be the solution to the discrete formulation~\eqref{method:Z}. For~$m \in \{1, \ldots, N\}$, the discrete error~$\Piht \eu$ satisfies
    \begin{equation*}
        \begin{aligned}
            & \Norm{\Piht \eu(\cdot, t_m^-)}{\L^2(\Omega(t_m))}^2 
              + \sum_{n = 1}^{m -1} \Norm{\jump{\Piht \eu}_n}{\L^2(\Omega(\tn))}^2 \\
            & \quad + \Norm{\Piht \eu(\cdot, 0)}{\L^2(\Omega_0)}^2 
              + \nu \sum_{n = 1}^m \int_{\In} \Norm{\Piht \eu(\cdot, t)}{1,h,t}^2 \dt 
              \lesssim \mathcal{E}_{\tau}(\hu) + \mathcal{E}_{h}(\hu, \u_0),
        \end{aligned}
    \end{equation*}
    where the temporal and spatial approximation bounds are given by
    \begin{align*}
        \mathcal{E}_{\tau}(\hu) & := \tau^{2(\ell+1)} \Big[ (\nu + \nu^{-1}) \Norm{\dpt^{(\ell + 1)} \hu}{L^2(0, T; \L^2(\Omega_0))}^2 
          + (\nu + \nu^{-1} h^2) \semiNorm{\dpt^{(\ell+1)} \hu}{L^2(0, T; \H^1(\Omega_0))}^2 \\
        & \hspace{2cm} + \nu \semiNorm{\dpt^{(\ell + 1)}\hu}{L^2(0, T; \H^2(\Omega_0))}^2 \Big], \\
        \mathcal{E}_{h}(\hu, \u_0) & := h^{2k} \Big[ h^2 \nu^{-1} \semiNorm{\dpt \hu}{L^2(0, T; \H^{k+1}(\Omega_0))}^2 
          + \big(\nu + \nu^{-1}(1 + h^2)\big) \semiNorm{\hu}{L^2(0, T; \H^{k+1}(\Omega_0))}^2 \\
        & \hspace{2cm} + h^2\semiNorm{\u_0}{\H^{k+1}(\Omega_0)}^2 
          + \nu T \semiNorm{\hu}{L^{\infty}(0, T; \H^{k+1}(\Omega_0))}^2 \Big].
    \end{align*}
\end{theorem}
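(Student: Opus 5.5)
The plan is to mimic the nodal stability argument of Theorem~\ref{thm:well-pos}, applied to the error equation~\eqref{eq:error.equation.final}. Fix $m$ and test with $\zht := \Piht \eu$ on $\Qj$ for $j \le m$, and $\zht := \bO$ otherwise. This is admissible because $\Piht\u \in \Zht$, since $\divx \u = 0$, and $\uht \in \Zht$. The left-hand side $\Bht(\Piht\eu,\zht)$ is bounded from below exactly as in~\eqref{eq:node.stab.bound.lhs.bound}, using the discrete Reynolds identity~\eqref{eq:second}, the jump identity, and the coercivity of Lemma~\ref{lemma:coercivity-aht}. This produces $\tfrac12$ of the nodal quantities and the term $\nu C_a\int\|\Piht\eu\|_{1,h,t}^2$. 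It remains to bound $M_1,\dots,M_6$ by small multiples of these quantities plus data terms.

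\textbf{Terms $M_1$, $M_2$, $M_4$, $M_6$.} These only involve the spatial BDM error. For $M_1$, expand $\Dt\phi(\IBDM\hu-\hu)$ using Lemma~\ref{lemma:Dt-phi}. Lemma~\ref{lemma:estimates-IBDM} applied to $\dpt\hu$ and to $\hu$, together with \eqref{eq:equiv-vh-3}, gives the bound $h^{k+1}(|\dpt\hu|_{H^{k+1}}+|\hu|_{H^{k+1}})\,\|\zht\|_{L^2}$. Then Lemma~\ref{lem:dpi} and Young's inequality give $\nu^{-1}h^{2k+2}(\cdots)+\epsilon\nu\int\|\zht\|_{1,h,t}^2$; this is the source of the $h^2\nu^{-1}$ factors.

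$M_2$ is treated by Cauchy--Schwarz and Young against $\tfrac14\|\zht(\cdot,0)\|^2$, producing $h^{2k+2}|\u_0|^2$.

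For $M_4$, the convective form is integrated by parts elementwise (or bounded directly). Using $\bw\in W^{1,\infty}$, the trace estimates~\eqref{eq:trace.NEW}, \eqref{eq:trace.NEW.NEW} and the discrete Poincar\'e inequality, it is bounded by $h^{k}|\hu|_{H^{k+1}}\|\zht\|_{1,h,t}$. After Young this gives $\nu^{-1}h^{2k}$.

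$M_6$ is the standard DG consistency term. Bound $\aht(\Piht\u-\u,\zht)$ by an augmented norm of $\Piht\u-\u$ (including $h_F^{1/2}$-weighted normal-gradient traces, via \eqref{eq:trace-grad.NEW}) times $\|\zht\|_{1,h,t}$. Then split $\Piht\u-\u = \phi(\Pt-\Id)\IBDM\hu + \phi(\IBDM\hu-\hu)$. Lemmas~\ref{lemma:estimates-Pt} and~\ref{lemma:estimates-IBDM}, together with \eqref{eq:equiv-vh.1}, yield $\nu(\tau^{2\ell+2}\|\dpt^{(\ell+1)}\hu\|_{H^2}^2+h^{2k}|\hu|_{H^{k+1}}^2)$. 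The $\H^2$ requirement in time comes from \eqref{eq:trace-grad.NEW}.

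\textbf{Terms $M_3$ and $M_5$: the main obstacle.} These involve the temporal error $\boldsymbol\eta := \phi((\Pt-\Id)\IBDM\hu)$. For $M_5$, bound $\boldsymbol\eta$ in $L^2$ and in the $1,h,t$ seminorm with traces by $\tau^{\ell+1}$ times time derivatives of $\IBDM\hu$, and use Young with $\nu^{-1}$. Comparing $\IBDM\hu$ with $\hu$ gives the $\nu^{-1}h^2$ weight on the $\H^1$ part.

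For $M_3$, one cannot use the usual orthogonality of $\Pt$ against $\Dt\zht$, because $\Dt\zht\notin\Vht^{[\ell-1]}$. I would instead pull back to $\Omega_0$. There, $(\phi\boldsymbol a,\phi\boldsymbol b)_{\Omega(t)} = (\Theta^{-1}$-weighted pairing of $\boldsymbol a,\boldsymbol b)$, with a metric of the type~\eqref{eq:theta}, and by Lemma~\ref{lemma:Dt-phi} we have $\Dt\zht = \phi(\dpt\widehat\z)+(\text{bounded})\,\zht$. The bounded part is handled by Cauchy--Schwarz. For the main part, replace the time-dependent weight by its value frozen at $\tnmo$: the frozen contribution vanishes by the orthogonality of $\Pt$, because $\dpt\widehat\z\in\Pp{\ell-1}{\In}$. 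The remainder carries a factor $\tau_n\|\dpt(\text{weight})\|_\infty$, which cancels the factor $\tau_n^{-1}$ in the inverse estimate (Lemma~\ref{lemma:inverse-estimate}). The result is $\tau^{\ell+1}\|\dpt^{(\ell+1)}\hu\|\,\|\zht\|_{L^2}$, which after Young and Lemma~\ref{lem:dpi} gives the $(\nu+\nu^{-1})\tau^{2\ell+2}$ terms. If this freezing fails in detail, I would fall back to a Gronwall-free absorption using $\|\zht\|_{L^2(\Qn)}$ and accept the $\nu^{-1}$ weight. The $\nu T\,L^\infty$ spatial term presumably arises from $\Pt$ acting on $\IBDM\hu-\hu$ through its nodal interpolation property. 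Finally, collect all terms, choose $\epsilon$ small, and absorb to conclude.
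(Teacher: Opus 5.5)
Your architecture is the paper's. You use the same test function and coercivity lower bound and the same six terms. For $M_3$ you use the same device: pull back, freeze the metric weight so the frozen part vanishes by the orthogonality of $\Pt$ against $\dpt\widehat{\z}_{h\tau}\in \Vhk\otimes\Pp{\ell-1}{\In}$, and let the factor $\tau_n$ cancel the inverse estimate. The paper subtracts the time average $\Pi_0^t\Theta$ rather than $\Theta(\cdot,\tnmo)$, which makes no difference. (The pulled-back pairing is $(\phi_t\boldsymbol a,\phi_t\boldsymbol b)_{\Omega(t)}=(\boldsymbol a,\Theta\boldsymbol b)_{\Omega_0}$, weighted by $\Theta$, not $\Theta^{-1}$; this is harmless.) There is, however, a genuine gap in $M_6$.

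\textbf{The gap in $M_6$.} It concerns the symmetrization and penalty contributions, which require a bound on $\hF^{-1/2}\|\jump{\Piht\u-\u}_{\Ft}\|_{\L^2(\Ft)}$. In your split, the temporal piece $\phi((\Pt-\Id)\IBDM\hu)$ has nonzero jumps: BDM fields are only normally continuous, and their tangential trace on $\partial\Omega_0$ is nonzero. Estimating these jumps by a trace inequality and Lemma~\ref{lemma:estimates-Pt} gives $h^{-1}\tau^{\ell+1}\|\dpt^{(\ell+1)}\IBDM\hu\|_{L^2(\In;\L^2(\Omega_0))}$. That is a contribution of order $\nu h^{-2}\tau^{2(\ell+1)}$, which is not controlled by $\mathcal{E}_\tau(\hu)+\mathcal{E}_h(\hu,\u_0)$, since it blows up as $h\to0$ at fixed $\tau$. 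So the $M_6$ bound you assert does not follow from the ingredients you list.

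\textbf{The missing idea.} $\hu$ has no facet jumps and vanishes on $\partial\Omega_0$. Because the Piola factor $J_t/\det J_t$ is continuous, both $\u=\phi\hu$ and $\phi(\Pt\hu)$ are continuous across $\Ft$, and therefore
\[
\jump{\phi((\Pt\circ\IBDM)\hu-\hu)}_{\Ft}=\jump{\phi\,\Pt(\IBDM\hu-\hu)}_{\Ft}.
\]
The paper then combines the $L^\infty(\In)$-stability of $\Pt$, the bound $\|\cdot\|_{L^2(\In)}\le\tau_n^{1/2}\|\cdot\|_{L^\infty(\In)}$, \eqref{eq:trace.NEW}, and a trace inequality with Lemma~\ref{lemma:estimates-IBDM}. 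This is exactly where the term $\nu T h^{2k}\semiNorm{\hu}{L^\infty(0,T;\H^{k+1}(\Omega_0))}^2$ in $\mathcal{E}_h$ comes from. Your closing guess attributes it to a nodal-interpolation property, but what is actually used is the $L^\infty$ stability of $\Pt$ applied after the continuity observation.

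Alternatively, you can keep your split. Write $\jump{(\Pt-\Id)\IBDM\hu}_F=(\Pt-\Id)\jump{\IBDM\hu-\hu}_F$ and apply Lemma~\ref{lemma:estimates-Pt} together with Lemma~\ref{lemma:estimates-IBDM} (with $m=1$) to $\dpt^{(\ell+1)}\hu$. This gives $\nu\tau^{2(\ell+1)}\semiNorm{\dpt^{(\ell+1)}\hu}{L^2(0,T;\H^1(\Omega_0))}^2$, which already appears in $\mathcal{E}_\tau$. Either way, the continuity of $\hu$ is indispensable.

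For the same reason, in $M_5$ do not route the estimate through the $1,h,t$ seminorm of $\boldsymbol\eta$. Keep all derivatives and jumps on $\zht$, and use only $L^2$ and discrete-trace bounds on $\boldsymbol\eta$, as the paper does.
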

\begin{proof}
    Without loss of generality, we prove the result for~$m = N$. 
    We now estimate each term~$M_i(\zht)$, $i = 1, \ldots, 6$,  appearing in~\eqref{eq:error.equation.final} separately.
    
    \paragraph{Estimate of~$M_1$.} 
    Using the identity in Lemma~\ref{lemma:Dt-phi}, together with  the Cauchy--Schwarz inequality and~\eqref{eq:equiv-vh-3}, we get
    \begin{alignat*}{3}
        M_1 (\zht) 
        & \le \sum_{n = 1}^N \int_{\In} \Big(\Norm{\phi_t (\IBDM  \dpt \hu - \dpt \hu)}{\L^2(\Omega_h(t))}  \\
        & \quad + \Norm{\nablax \bw - \divx \bw \mathbb{I}_d}{\L^{\infty}(\Omega(t))} \Norm{\phi_t (\IBDM \hu - \hu)}{\L^2(\Omega_h(t))} \Big)\Norm{\zht}{\L^2(\Omega_h(t))} \dt \\ 
        &\lesssim \sum_{n = 1}^N \int_{\In} \big(\Norm{\IBDM  \dpt \hu - \dpt \hu}{\L^2(\Omega_0)} + \Norm{\IBDM \hu - \hu}{\L^2(\Omega_0)}\big) \Norm{\zht}{\L^2(\Omega_h(t))} \dt \\ 
        &\lesssim h^{k+1} \sum_{n = 1}^N \int_{\In} \big(\semiNorm{\dpt \hu}{\H^{k+1}(\Omega_0)} + \semiNorm{\hu}{\H^{k+1}(\Omega_0)}\big) \Norm{\zht}{\L^2(\Omega_h(t))} \dt,
    \end{alignat*}
    where, in the last inequality, we have applied the approximation properties of~$\IBDM$ from Lemma~\ref{lemma:estimates-IBDM}. Multiplying and dividing by $\nu^{1/2}$, and finally applying the Poincar\'e--Friedrich inequality from Lemma~\ref{lem:dpi}, yields
    \begin{equation*}
        M_1 (\zht) \lesssim \nu^{-1/2}h^{k+1} \big(\semiNorm{\dpt \hu}{L^2(0, T; \H^{k+1}(\Omega_0))}  + \semiNorm{\hu}{L^2(0, T; \H^{k+1}(\Omega_0))} \big) \Big(\nu \int_0^T \Norm{\zht}{1, h, t}^2 \dt \Big)^{1/2}.
    \end{equation*}
    \paragraph{Estimate of~$M_2$.} 
    An immediate application of the Cauchy--Schwarz inequality and the approximation properties in Lemma~\ref{lemma:estimates-IBDM} of~$\IBDM$ gives
    \begin{equation*}
        M_2 (\zht) 
        \lesssim h^{k+1} \semiNorm{\u_0}{\H^{k+1}(\Omega_0)} \Norm{\zht(\cdot, 0)}{\L^2(\Omega_0)}.
    \end{equation*}
    \paragraph{Estimate of~$M_3$.} 
    We first split the term~$M_3$ using Lemma~\ref{lemma:Dt-phi} as follows:
    \begin{alignat}{3}
        \nonumber
        M_3 (\zht) & = - \sum_{n = 1}^N \int_{\In} \Big(\phi\big((\Pt \circ \IBDM) \hu - \IBDM \hu \big) , \, \phi(\dpt \widehat{\z}_{h\tau}) + (\Nablax \bw - \divx \bw \mathbb{I}_d) \zht \Big)_{\Omega_h(t)} \dt \\
        \label{eq:split-M3}
        & =: M_3^{(a)} (\zht) + M_3^{(b)} (\zht).
    \end{alignat}
    As for the first term on the right-hand side, we first make a change of variable, and then use the definition in~\eqref{eq:theta} of~$\Theta$ and the orthogonality properties of~$\Pt$ to obtain
    \begin{alignat*}{3}
        M_3^{(a)} (\zht) & = - \sum_{n = 1}^N \int_{\In} \int_{\Omega_0} \Big((\Pt \circ \IBDM )\hu - \IBDM \hu \Big) \cdot \Theta \, \dpt \widehat{\z}_{h\tau} \dy \dt  \\
        & = - \sum_{n = 1}^N \int_{\In} \int_{\Omega_0} \Big((\Pt \circ \IBDM )\hu - \IBDM \hu \Big) \cdot (\Theta  - \Pi_0^t \Theta) \, \dpt \widehat{\z}_{h\tau}\dy \dt ,
    \end{alignat*}
    where~$\Pi_0^t$ denotes the~$L^2(0, T)$-orthogonal projection into the space of constant functions in time. This identity, combined with the H\"older inequality, the approximation properties in Lemma~\ref{lemma:estimates-Pt} of~$\Pt$, a standard polynomial inverse estimate, and bound~\eqref{eq:equiv-vh-3}, gives
    \begin{alignat}{3}
        \nonumber
        M_3^{(a)} (\zht)& \lesssim \Norm{\Theta - \Pi_0^t \Theta}{L^{\infty}(\In; \L^{\infty}(\Omega_0))}\sum_{n = 1}^N \Norm{(\Pt \circ \IBDM )\hu - \IBDM \hu }{L^2(\In; \L^2(\Omega_0))} \Norm{\dpt \widehat{\z}_{h\tau}}{L^2(\In; \L^2(\Omega_0))} \\
        \nonumber
        & \lesssim \tau_n \Norm{\Theta}{W^{1,\infty}(\In; \L^{\infty}(\Omega_0))}
        \sum_{n = 1}^N \Norm{(\Pt \circ \IBDM )\hu - \IBDM \hu }{L^2(\In; \L^2(\Omega_0))}
        \tau_n^{-1} \Norm{\widehat{\z}_{h\tau}}{L^2(\In; \L^2(\Omega_0))}
        \\
        \nonumber
        & \lesssim \sum_{n = 1}^N \Norm{(\Pt \circ \IBDM )\hu - \IBDM \hu}{L^2(\In; \L^2(\Omega_0))} \Norm{\widehat{\z}_{h\tau}}{L^2(\In; \L^2(\Omega_0))} \\
        \label{eq:estimate-M3-a}
        & \lesssim \tau^{\ell+1} \sum_{n = 1}^N \Norm{\dpt^{(\ell+1)} \IBDM \hu}{L^2(\In; \L^2(\Omega_0))} \Norm{\zht}{\L^2(\Qn)}.
    \end{alignat}
    The term~$M_3^{(b)}$ can be estimated similarly using the Cauchy--Schwarz inequality, the regularity of~$\bw$, bound~\eqref{eq:equiv-vh-3}, and the the approximation properties of~$\Pt$ in Lemma~\ref{lemma:estimates-Pt} as follows:
    \begin{equation}\label{eq:estimate-M3-b}
        \begin{aligned}
            M_3^{(b)} (\zht) & \lesssim \sum_{n = 1}^N \Norm{\phi\Big( (\Pt \circ \IBDM )\hu - \IBDM \hu\Big) }{\L^2(\Qn)} \Norm{\zht}{\L^2(\Qn)} \\
            &\lesssim \tau^{\ell+1} \sum_{n = 1}^N \Norm{\dpt^{(\ell + 1)} \IBDM \hu}{L^2(\In; \L^2(\Omega_0))} \Norm{\zht}{\L^2(\Qn)}.
        \end{aligned}
    \end{equation}
    Combining estimates~\eqref{eq:estimate-M3-a} and~\eqref{eq:estimate-M3-b} with~\eqref{eq:split-M3}, and using the triangle inequality, the approximation properties of~$\IBDM$ in Lemma~\ref{lemma:estimates-IBDM} with $m=1$, and the Poincar\'e--Friedrich inequality from Lemma~\ref{lem:dpi}, implies
    \begin{equation*}
        M_3(\zht) \lesssim \nu^{-1/2}\tau^{\ell + 1} \Big(\Norm{\dpt^{(\ell + 1)}{\hu}}{L^2(0, T; \L^2(\Omega_0))}  + h\semiNorm{\dpt^{(\ell + 1)}\hu}{L^2(0, T; \H^1(\Omega_0))}\Big) \Big(\nu \int_0^T \Norm{\zht}{1, h, t}^2 \dt \Big)^{1/2}.
    \end{equation*}
    \paragraph{Estimate of~$M_4$.}
    Recalling the definition in~\eqref{eq:cht} of the form~$\cht(\cdot; \cdot, \cdot)$, we can split the term~$M_4$ as
    \begin{equation}\label{eq:aux-M4}
        \begin{aligned}
            M_4 (\zht) 
            & = -\sum_{n = 1}^N\int_{\In} \bigg[\big( (\Nablax \phi( \IBDM \hu - \hu)) \bw , \zht \big)_{\Omega_h(t)}  \\ 
            &\quad - \sum_{F \in \Fho} \big((\bw \cdot \nFt) \jump{\phi(\IBDM \hu - \hu)}_{\Ft}, \av{\zht}_{\Ft} \big)_{\Ft}\bigg] \dt.
        \end{aligned}
    \end{equation}
    Using the H\"older inequality,  bound~\eqref{eq:equiv-vh.1},  the approximation properties of~$\IBDM$ in Lemma~\ref{lemma:estimates-IBDM}, and the regularity of~$\bw$, for all~$t \in [0, T]$, we obtain
    \begin{equation}\label{eq:M4-a}
        \begin{aligned}
            \big((\Nablax \phi_t & (\IBDM \hu - \hu)) \bw, \zht\big)_{\Omega_h(t)} \\
            & \le \Norm{\bw}{\L^{\infty}(\QT)}\Norm{\Nablax \phi_t(\IBDM \hu - \hu) }{\L^2(\Omega_h(t))} \Norm{\zht}{\L^2(\Omega(t))} \\
            & \lesssim \Norm{\bw}{\L^{\infty}(\QT)} \big(\Norm{\IBDM \hu - \hu}{\L^2(\Omega_0)} + \Norm{\Nablay (\IBDM \hu - \hu)}{\L^2(\Omega_h(0))} \big) \Norm{\zht}{\L^2(\Omega(t))} \\
            & \lesssim h^k \semiNorm{\hu}{\H^{k+1}(\Omega_0)} \Norm{\zht}{\L^2(\Omega(t))}.
        \end{aligned}
    \end{equation}
    Similarly, now using bounds~\eqref{eq:trace.NEW} and~\eqref{eq:trace.NEW.NEW}, 
    Lemma~\ref{lemma:estimates-IBDM}, and a standard continuous trace inequality, for all~$t \in [0, T]$, we get
    \begin{equation}\label{eq:M4-b}
        \begin{aligned}
             \sum_{F \in \Fho} \big((\bw  \cdot \nFt) & \jump{\phi_t(\IBDM \hu - \hu)}_{\Ft}, \av{\zht}_{\Ft} \big)_{\Ft} \\
             & \lesssim \Norm{\bw}{\L^{\infty}(\QT)} \sum_{K \in \ThO} \Norm{\phi_t(\IBDM \hu - \hu)}{\L^2(\partial K_t)}  \Norm{\zht}{\L^2(\partial K_t)} \\
             & \lesssim \sum_{K \in \ThO} \hK^{-1/2} \Norm{\IBDM \hu - \hu}{\L^2(\partial K)} \Norm{\zht}{\L^2(\Kt)} \\
             & \lesssim h^k \semiNorm{\hu}{\H^{k+1}(\Omega_0)} \Norm{\zht}{\L^2(\Omega(t))}.
        \end{aligned}
    \end{equation}
    Combining~\eqref{eq:M4-a} and~\eqref{eq:M4-b} with~\eqref{eq:aux-M4}, and applying Lemma~\ref{lem:dpi}, we can estimate~$M_4$ as follows:
    \begin{equation*}%
        M_4 (\zht)\lesssim \nu^{-1/2}h^k \semiNorm{\hu}{L^2(0, T; \H^{k+1}(\Omega_0))}  \Big(\nu \int_0^T \Norm{\zht}{1, h, t}^2 \dt \Big)^{1/2}.
    \end{equation*}
    \paragraph{Estimate of~$M_5$.} 
    As for the term~$M_5$, we use the definition in~\eqref{eq:cht} of the form~$\cht(\cdot; \cdot, \cdot)$ to get
    \begin{equation}\label{eq:aux-M5-identity}
        \begin{aligned}
            M_5 (\zht)
            & = \sum_{n = 1}^N \int_{\In} \bigg[\big((\Nablax \zht) \bw, \phi((\Pt \circ \IBDM) \hu - \IBDM \hu) \big)_{\Omega_h(t)} \\
            & \quad - \sum_{F \in \Fho} \big((\bw \cdot \nFt) \jump{\zht}_{\Ft}, \av{\phi((\Pt \circ \IBDM) \hu - \IBDM \hu)}_{\Ft} \big)_{\Ft}\bigg] \dt.
        \end{aligned}
    \end{equation}
    Using the H\"older inequality, bound~\eqref{eq:equiv-vh-3}, and the approximation properties in Lemmas~\ref{lemma:estimates-Pt} and~\ref{lemma:estimates-IBDM} of~$\Pt$ and~$\IBDM$, respectively, for~$n \in \{1, \ldots, N\}$, we obtain
    \begin{equation}\label{eq:M5-a}
        \begin{aligned}
            \int_{\In} &\Big((\Nablax \zht) \bw,  \phi \big((\Pt \circ \IBDM) \hu - \IBDM \hu\big) \Big)_{\Omega_h(t)} \dt \\
            & \le \Norm{\bw}{\L^{\infty}(\QT)} \nu^{1/2} \Norm{\Nablax \zht}{L^2(\In; \L^2(\Omega_h(t)))} \nu^{-1/2} \Norm{\phi((\Pt \circ \IBDM) \hu - \IBDM \hu)}{L^2(\In; \L^2(\Omega(t)))} \\
            & \lesssim \nu^{1/2} \Norm{\Nablax \zht}{L^2(\In; \L^2(\Omega_h(t)))} \nu^{-1/2} \Norm{(\Pt \circ \IBDM) \hu - \IBDM \hu}{L^2(\In; \L^2(\Omega_0))} \\
            & \lesssim \nu^{-1/2} \tau_n^{\ell+1} \Norm{\IBDM \dpt^{(\ell + 1)} \hu}{L^2(\In; \L^2(\Omega_0))} \nu^{1/2} \Norm{\Nablax \zht}{L^2(\In; \L^2(\Omega_h(t)))} \\
            & \lesssim \nu^{-1/2} \tau_n^{\ell+1} \big(\Norm{\dpt^{(\ell + 1)} \hu}{L^2(\In; \L^2(\Omega_0))}  + h \semiNorm{\dpt^{(\ell + 1)} \hu}{L^2(\In; \H^1(\Omega_0))}\big) \nu^{1/2}
            \int_{I_n} \| \zht^{(n)}\|_{1,h,t}^2 \dt.
        \end{aligned}
    \end{equation}
    In a similar way, now using bound~\eqref{eq:trace.NEW}, the shape-regularity of~$\ThO$, and Lemma~\ref{lemma:estimates-IBDM}, the following estimate holds:
    \begin{equation}\label{eq:M5-b}
        \begin{aligned}
            & \int_{\In} \sum_{F \in \Fho}  \big((\bw \cdot \nFt) \jump{\zht}_{\Ft}, \av{\phi((\Pt \circ \IBDM) \hu - \IBDM \hu)}_{\Ft} \big)_{\Ft} \dt \\
            & \lesssim \Norm{\bw}{\L^{\infty}(\QT)} \int_{\In} \sum_{F \in \Fho} \nu^{1/2} \hF^{-1/2} \Norm{\jump{\zht}_{\Ft}}{\L^2(\Ft)} \nu^{-1/2} \hF^{1/2} \Norm{\av{\phi((\Pt \circ \IBDM) \hu - \IBDM \hu)}_{\Ft}}{\L^2(\Ft)}\dt  \\
            & \lesssim \Big(\nu \int_{\In} \sum_{F \in \Fho} \hF^{-1} \Norm{\jump{\zht}_{\Ft}}{\L^2(\Ft)}^2 \dt \Big)^{1/2}  \Big(\nu^{-1} \int_{\In} \sum_{K \in \ThO} \hK \Norm{\phi((\Pt \circ \IBDM) \hu - \IBDM \hu)}{\L^2(\partial K_t)}^2 \dt \Big)^{1/2} \\
            & \lesssim \Big(\nu \int_{\In} \sum_{F \in \Fho} \hF^{-1} \Norm{\jump{\zht}_{\Ft}}{\L^2(\Ft)}^2 \dt \Big)^{1/2}  \Big(\nu^{-1} \int_{\In} \sum_{K \in \ThO} \hK \Norm{(\Pt \circ \IBDM) \hu - \IBDM \hu)}{\L^2(\partial K)}^2 \dt \Big)^{1/2} \\
            & \lesssim \nu^{-1/2} \tau_n^{\ell+1} \big(\Norm{\dpt^{(\ell + 1)} \hu}{L^2(\In; \L^2(\Omega_0))}  + h \semiNorm{\dpt^{(\ell + 1)} \hu}{L^2(\In; \H^1(\Omega_0))}\big) \nu^{1/2}
            \int_{I_n} \| \zht^{(n)}\|_{1,h,t}^2 \dt.
        \end{aligned}
    \end{equation}
    Combining estimates~\eqref{eq:M5-a} and~\eqref{eq:M5-b} with identity~\eqref{eq:aux-M5-identity} yields
    \begin{equation*}
        M_5(\zht) \lesssim \nu^{-1/2} \tau^{\ell+1} \big(\Norm{\dpt^{(\ell + 1)} \hu}{L^2(0, T; \L^2(\Omega_0))}  + h \semiNorm{\dpt^{(\ell + 1)} \hu}{L^2(0, T; \H^1(\Omega_0))}\big) \Big(\nu \int_0^T \Norm{\zht}{1, h, t}^2 \dt \Big)^{1/2}.
    \end{equation*}
    \paragraph{Estimate of~$M_6$.} 
    Using the definition in~\eqref{eq:aht} of the bilinear form~$\aht(\cdot, \cdot)$, we have
    \begin{equation}\label{eq:identity-M6}
        \begin{aligned}
            M_6 (\zht)
            & = - \nu \sum_{n = 1}^N \int_{\In} \bigg[\sum_{K \in \ThO} \big(\Nablax \phi((\Pt \circ \IBDM) \hu - \hu), \Nablax \zht \big)_{\Kt} \\
            & \qquad \quad - \sum_{F \in \Fh} \big(\av{\Nablax \phi((\Pt \circ \IBDM) \hu - \hu)}_{\Ft} \nFt, \jump{\zht}_{\Ft} \big)_{\Ft} \\
            & \qquad\quad  - \sum_{F \in \Fh} \big(\jump{\phi((\Pt \circ \IBDM) \hu - \hu)}_{\Ft}, \av{\Nablax \zht}_{\Ft} \nFt \big)_{\Ft} \\
            & \qquad\quad  + \sum_{F \in \Fh} \big(\sigma \hF^{-1} \jump{\phi((\Pt \circ \IBDM) \hu - \hu)}_{\Ft}, \jump{\zht}_{\Ft}\big)_{\Ft} \bigg] \dt \\
            & =: M_6^{(a)} (\zht) + M_6^{(b)} (\zht) + M_6^{(c)} (\zht) + M_6^{(d)}(\zht).
        \end{aligned}
    \end{equation}
    The Cauchy--Schwarz inequality, bound~\eqref{eq:equiv-vh.1}, and the approximation properties in Lemmas~\ref{lemma:estimates-Pt} and~\ref{lemma:estimates-IBDM} of~$\Pt$ and~$\IBDM$, respectively, give
    \begin{equation*}
        \begin{aligned}
            M_6^{(a)} (\zht) & \lesssim \nu^{1/2} \sum_{n = 1}^N \big(\Norm{(\Pt \circ \IBDM) \hu - \hu}{L^2(\In; \L^2(\Omega_0))} + \Norm{\Nablay ((\Pt \circ \IBDM) \hu - \hu)}{L^2(\In; \L^2(\Omega_h(0)))}\big) \\
            & \quad \times \nu^{1/2} \Norm{\Nablax \zht}{L^2(\In; \L^2(\Omega_h(t)))} \\
            & \lesssim \nu^{1/2} \sum_{n = 1}^N \Big(\tau_n^{\ell+1} \big(\Norm{\dpt^{(\ell + 1)} \hu}{L^2(\In; \L^2(\Omega_0))} + \semiNorm{\dpt^{(\ell + 1)} \hu}{L^2(\In; \H^{1}(\Omega_0))} \big) + h^k \semiNorm{\hu}{L^2(\In; \H^{k+1}(\Omega_0))} \Big) \\
            & \quad \times \nu^{1/2} 
            \int_{I_n} \| \zht^{(n)}\|_{1,h,t}^2 \dt \\
            & \lesssim \nu^{1/2} \Big( \tau^{\ell+1} \big(\Norm{\dpt^{(\ell + 1)} \hu}{L^2(0,T; \L^2(\Omega_0))} + \semiNorm{\dpt^{(\ell + 1)} \hu}{L^2(0,T; \H^{1}(\Omega_0))} \big) + h^k  \semiNorm{\hu}{L^2(0,T; \H^{k+1}(\Omega_0))} \Big) \\
            & \quad \times \Big(\nu \int_0^T \Norm{\zht}{1, h, t}^2 \dt \Big)^{1/2}.
        \end{aligned}
    \end{equation*}
    As for the term~$M_6^{(b)} $, the use of the Cauchy--Schwarz inequality, the shape-regularity of~$\ThO$, and bound~\eqref{eq:trace-grad.NEW} leads to
    \begin{alignat*}{3}
        M_6^{(b)}(\zht) & \lesssim \nu \sum_{n = 1}^N \int_{\In} \bigg(\sum_{K \in \ThO} \hK \Norm{\Nablax \phi ((\Pt \circ \IBDM) \hu - \hu) }{\L^2(\partial \Kt)}^2 \bigg)^{1/2} \!\! \bigg(\sum_{F \in \Fh} \hF^{-1} \Norm{\jump{\zht}_{\Ft}}{\L^2(\Ft)}^2 \bigg)^{1/2} \!\!\dt \\
        & \lesssim \nu \sum_{n = 1}^N \int_{\In} \bigg(\sum_{K \in \ThO} \Big(\Norm{(\Pt \circ \IBDM) \hu - \hu}{\L^2(K)}^2 + (\hK^2 + 1) \Norm{\Nablay ((\Pt \circ \IBDM) \hu - \hu)}{\L^2(K)}^2 \\
        & \quad + \hK^2 \Norm{\Nablay^2 ((\Pt \circ \IBDM) \hu - \hu)}{\L^2(K)}^2 \Big) \bigg)^{1/2} \bigg(\sum_{F \in \Fh} \hF^{-1} \Norm{\jump{\zht}}{\L^2(\Ft)}^2 \bigg)^{1/2}\!\! \dt,
    \end{alignat*}
    which, together with the approximation properties in Lemmas~\ref{lemma:estimates-Pt} and~\ref{lemma:estimates-IBDM} of~$\Pt$ and~$\IBDM$, respectively, gives
    \begin{alignat*}{3}
        M_6^{(b)}(\zht) & \lesssim \nu^{1/2} \Big(\tau^{\ell+1}\big(\Norm{\dpt^{(\ell + 1)} \hu}{L^2(0, T; \L^2(\Omega_0))} + \semiNorm{\dpt^{(\ell + 1)} \hu}{L^2(0, T; \H^1(\Omega_0))} + \semiNorm{\dpt^{(\ell + 1)} \hu}{L^2(0, T; \H^2(\Omega_0))} \big) \\
        & \quad + h^k \semiNorm{\hu}{L^2(0, T; \H^{k+1}(\Omega_0))} \Big) \Big(\nu \int_0^T \Norm{\zht}{1, h, t}^2 \dt \Big)^{1/2}
    \end{alignat*}
    For the term~$M_6^{(c)}$, we first observe that~$\phi(\Pt \hu)$ and~$\u = \phi(\hu)$ are continuous across the facets~$\Ft$. Therefore, we use the Cauchy--Schwarz inequality, the~$L^{\infty}(0, T)$ stability of~$\Pt$ (see~\cite[Lemma~4.1]{Beirao-Gomez-Dassi:2025}), bounds ~\eqref{eq:trace.NEW} and ~\eqref{eq:trace-grad.NEW.NEW}, and the approximation properties of~$\IBDM$ combined with a standard trace inequality on each element~$K \in \ThO$ to obtain
    \begin{alignat*}{3}
        M_6^{(c)} (\zht) 
        & \lesssim \nu \sum_{n = 1}^N \sum_{K \in \ThO} \hK^{-1/2} \Norm{\phi((\Pt\circ \IBDM) \hu - \Pt \hu)}{L^2(\In; \L^2(\partial \Kt))} \hK^{1/2} \Norm{\Nablax \zht}{L^2(\In; \L^2(\partial \Kt))} \\
        & \lesssim \nu \sum_{n = 1}^N \tau_n^{1/2} \sum_{K \in \ThO} \hK^{-1/2} \Norm{\IBDM \hu - \hu}{L^{\infty}(\In; \L^2(\partial K))} \Norm{\Nablax \zht}{L^2(\In; \L^2(\Kt))} \\
        & \lesssim  \sqrt{\nu T} h^k \semiNorm{\hu}{L^{\infty}(0, T; \H^{k+1}(\Omega_0))} \Big(\nu \int_0^T\Norm{\zht}{1,h,t}^2 \dt \Big)^{1/2},
    \end{alignat*}
    where, in the last step, we have used again the Cauchy--Schwarz inequality. 
    The term~$M_6^{(d)}$ can be estimated similarly as follows:
    \begin{alignat*}{3}
        M_6^{(d)} (\zht) 
        & \lesssim \nu \sum_{n = 1}^N \int_{\In} \bigg(\sum_{K \in \ThO} \hK^{-1} \Norm{\phi((\Pt \circ \IBDM ) \hu - \Pt \hu)}{\L^2(\partial \Kt)}^2\bigg)^{1/2} \!\! \bigg(\sum_{F \in \Fh} \hF^{-1} \Norm{\jump{\zht}_{\Ft}}{\L^2(\Ft)}^2 \bigg)^{1/2} \!\! \dt \\
        & \lesssim \sqrt{\nu T} h^{k} \semiNorm{\hu}{L^{\infty}(0, T; \H^{k+1}(\Omega_0))} \bigg(\nu \int_0^T \Norm{\zht}{1, h, t}^2 \dt \bigg)^{1/2}.
    \end{alignat*}
    Combining the estimates for $M_6^{(a)}$--$M_6^{(d)}$ with \eqref{eq:identity-M6} leads to
    \begin{equation*}
        \begin{aligned}
            M_6 (\zht) &\lesssim \Big[ \nu^{1/2} \Big(\tau^{\ell+1}\big(\Norm{\dpt^{(\ell + 1)} \hu}{L^2(0, T; \L^2(\Omega_0))} + \semiNorm{\dpt^{(\ell + 1)} \hu}{L^2(0, T; \H^1(\Omega_0))} + \semiNorm{\dpt^{(\ell + 1)} \hu}{L^2(0, T; \H^2(\Omega_0))} \big) \\
            & \quad + h^k \semiNorm{\hu}{L^2(0, T; \H^{k+1}(\Omega_0))} \Big) + \sqrt{\nu T} h^{k} \semiNorm{\hu}{L^{\infty}(0, T; \H^{k+1}(\Omega_0))} \Big] \bigg(\nu \int_0^T \Norm{\zht}{1, h, t}^2 \dt \bigg)^{1/2}.
        \end{aligned}
    \end{equation*}
    Now plugging the estimates for $M_1$--$M_6$ into \eqref{eq:error.equation.final}, after basic algebraic manipulations, we get
    \begin{equation}\label{eq:err.est.consistency}
        \begin{aligned}
            \Bht(\Piht \eu, \zht)
            &\lesssim \Bigg[ h^{2 k} \Big( 
              h^2 \nu^{-1} \semiNorm{\dpt \hu}{L^2(0, T; \H^{k+1}(\Omega_0))}^2  + ( \nu + (1+h^2) \nu^{-1} )\semiNorm{\hu}{L^2(0, T; \H^{k+1}(\Omega_0))}^2  \\ 
             &\quad + h^2 \semiNorm{\u_0}{\H^{k+1}(\Omega_0)}^2  + \nu T \semiNorm{\hu}{L^{\infty}(0, T; \H^{k+1}(\Omega_0))}^2 \Big) \\ 
            &\quad + \tau^{2(\ell+1)} \Big(
             (\nu^{-1} + \nu) \Norm{\dpt^{(\ell + 1)} \hu}{L^2(0, T; \L^2(\Omega_0))}^2  + (\nu^{-1} h^2  + \nu) \semiNorm{\dpt^{(\ell + 1)} \hu}{L^2(0, T; \H^1(\Omega_0))}^2 \\ 
             &\quad + \nu \semiNorm{\dpt^{(\ell + 1)} \hu}{L^2(0, T; \H^2(\Omega_0))}^2 \Big) \Bigg]^{1/2} \Bigg[\nu \int_0^T \Norm{\zht}{1, h, t}^2 \dt  +  \Norm{\zht(\cdot, 0)}{\L^2(\Omega_0)}^2
            \Bigg]^{1/2}.
        \end{aligned}
    \end{equation}
    On the other hand, applying the same argument that leads to~\eqref{eq:node.stab.bound.lhs.bound}, we get
    \begin{equation}\label{eq:err.est.coercivity}
        \begin{aligned}
            \Bht(\Piht \eu, \Piht \eu) &\gtrsim
            \Norm{\Piht \eu(\cdot, T)}{\L^2(\Omega(T))}^2 +\sum_{n = 1}^{N-1} \Norm{\jump{\Piht \eu}_n}{\L^2(\Omega(t_n))}^2 + \Norm{\Piht \eu(\cdot, 0)}{\L^2(\Omega_0)}^2 \\ 
            &\quad + \sum_{n = 1}^{N} \int_{\In} 
            \nu \|\Piht \eu \|_{1,h,t}^2
            \dt.
        \end{aligned}
    \end{equation}
    Substituting $\zht = \Piht \eu$ in~\eqref{eq:err.est.consistency}, combining the resulting bound with~\eqref{eq:err.est.coercivity} and applying the Young  inequality to absorb the terms involving~$\Piht \eu$ on the right-hand side yields, after basic algebraic manipulations, the desired result.
\end{proof}
The following corollary follows easily combining Theorem~\ref{theo:main:conv} with the triangle inequality and approximation estimates for the operator~$\Piht$.
\begin{corollary}[Error estimate]\label{cor:u-minus-uh.error.est}
    Under the same assumptions of Theorem \ref{theo:main:conv}, for~$m \in \{1,2,..,N\}$, it holds
    $$
        \Norm{\u(\cdot, t_m^-) - \uht(\cdot, t_m^-)}{\L^2(\Omega(t_m))}^2  
        + \nu \sum_{n = 1}^m \int_{\In} \Norm{\u(\cdot, t) - \uht}{1,h,t}^2 \dt 
          \lesssim (1 + \nu + \nu^{-1}) \, (h^{2k} + \tau^{2(\ell+1)}) \, .
    $$
\end{corollary}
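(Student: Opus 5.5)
The plan is to split the total error as in the proof of Theorem~\ref{theo:main:conv}, namely $\eu = \epi + \Piht \eu$ with $\epi = \u - \Piht \u$, and apply the triangle inequality to both the nodal $\L^2$ term and the broken energy term:
\begin{equation*}
    \Norm{\eu(\cdot,t_m^-)}{\L^2(\Omega(t_m))}^2 + \nu \sum_{n=1}^m \int_{\In} \Norm{\eu}{1,h,t}^2 \dt \le 2\Big(\Norm{\epi(\cdot,t_m^-)}{\L^2(\Omega(t_m))}^2 + \nu \sum_{n=1}^m\int_{\In}\Norm{\epi}{1,h,t}^2\dt\Big) + 2\,(\text{discrete error}).
\end{equation*}
The discrete-error contribution is bounded directly by Theorem~\ref{theo:main:conv}. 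Under the stated regularity of $\hu$ and $\u_0$, both $\mathcal{E}_\tau(\hu)$ and $\mathcal{E}_h(\hu,\u_0)$ are controlled by $(1+\nu+\nu^{-1})(h^{2k}+\tau^{2(\ell+1)})$, using $h\lesssim 1$ and absorbing $T$ and the Sobolev norms of $\hu$ into the hidden constant.

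For the nodal interpolation error I will use the defining property $\Pt v(t_m^-) = v(t_m)$ of the Thom\'ee projection. This gives $\epi(\cdot,t_m^-) = \phi_{t_m}(\hu - \IBDM \hu)(\cdot,t_m)$. By \eqref{eq:equiv-vh-3} and Lemma~\ref{lemma:estimates-IBDM} its $\L^2$ norm is then $\lesssim h^{k+1}\semiNorm{\hu}{L^\infty(0,T;\H^{k+1}(\Omega_0))}$. Note that $H^1(0,T;\H^{k+1}(\Omega_0))\hookrightarrow L^\infty(0,T;\H^{k+1}(\Omega_0))$, so this norm is finite under the theorem's assumptions.

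For the energy interpolation error I will split $\epi = \phi(\hu - \IBDM\hu) + \phi(\IBDM\hu - \Pt\IBDM\hu)$.
\begin{itemize}
    \item \emph{Volume terms.} Bound these via \eqref{eq:equiv-vh.1}, which gives $\L^2$ plus $\H^1$ seminorms on $\Omega_0$.
    \item \emph{Jump terms.} Use \eqref{eq:equiv-vh-2}. The jumps of $\phi(\hu - \Pt\hu)$ vanish, since $\u$ is continuous and $\phi$ commutes with $\Pt$. Hence only the jumps of $\phi(\Pt(\IBDM\hu - \hu))$ remain, and these are bounded by the $L^\infty$-stability of $\Pt$, a continuous trace inequality and Lemma~\ref{lemma:estimates-IBDM}. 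This is exactly the argument used for $M_6^{(c)}$ and $M_6^{(d)}$, and it yields $\nu T h^{2k}\semiNorm{\hu}{L^\infty(\H^{k+1})}^2$.
    \item \emph{Temporal part.} Lemma~\ref{lemma:estimates-Pt} with $s=\ell+1$, applied in $\H^1(\Omega_0)$ together with the $\H^1$-stability of $\IBDM$ from Lemma~\ref{lemma:estimates-IBDM} with $m=1$, gives $\tau^{2(\ell+1)}\Norm{\dpt^{(\ell+1)}\hu}{L^2(0,T;\H^1(\Omega_0))}^2$.
\end{itemize}
Multiplying by $\nu$, all of these are $\lesssim \nu(h^{2k}+\tau^{2(\ell+1)})$.

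The only delicate point I expect is the jump term of the interpolant. Because the estimate only assumes $\hu\in H^1(0,T;\H^{k+1})$, one cannot simply commute $\Pt$ with traces in $L^2$ in time. The $L^\infty(0,T)$-stability of $\Pt$ used for $M_6^{(c)}$ is needed here, and it is what produces the factor $T$ absorbed into the constant. Collecting all contributions gives the claimed bound.
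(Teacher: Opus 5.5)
Your proposal is correct and follows the route the paper indicates. You split $\eu = \epi + \Piht \eu$ and bound the discrete part by Theorem~\ref{theo:main:conv}. You then bound $\epi$ through the nodal property of $\Pt$, Lemmas~\ref{lemma:estimates-Pt} and~\ref{lemma:estimates-IBDM}, and, for the facet jumps, the $L^{\infty}(0,T)$-stability argument already used for $M_6^{(c)}$ and $M_6^{(d)}$.

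One justification should be repaired: $\phi$ does \emph{not} commute with $\Pt$, since the factor $J_t/\det J_t$ depends on $t$ (exactly the space--time tangling the paper stresses). The jumps of $\phi(\hu - \Pt \hu)$ vanish instead because $\Pt$ acts as a scalar projection in $t$ for each fixed $\by$, so $\jump{\Pt \hu}_F = \Pt \jump{\hu}_F = \bO$, and then \eqref{eq:equiv-vh-2} applies. Likewise, $L^{\infty}$-stability is needed because $\Pt$ uses the point value at $\tn^-$ and is therefore not stable in $L^2$ in time, not because $\Pt$ fails to commute with traces.
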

\begin{remark}[Pressure-robustness]
    The pressure-robustness of the scheme is reflected by the fact that the pressure solution does not enter the velocity error estimates in Theorem~\ref{theo:main:conv} and Corollary~\ref{cor:u-minus-uh.error.est}.
    \eremk
\end{remark}
\section{The low-order cases} \label{sec:6}
In the present section, we investigate more specifically the low-order (in time) cases $\ell=0$ and~$\ell = 1$, showing, in particular, stability and error estimates that are robust in the viscosity parameter $\nu$.
\begin{proposition}[Stability for the low-order cases]\label{prop:los}
    Let the polynomial degree in time $\ell \in \{0,1\}$. Then the following improved stability estimate also holds:
    $$
        \Norm{\uht}{L^\infty(\L^2;\QT)}^2 + \sum_{n = 1}^{N} \int_{\In} 
        \nu \|\uht (\cdot,t)\|_{1,h,t}^2
        \lesssim
        \Norm{\f (\cdot,t)}{L^1(\L^2;\QT)}^2 + \Norm{\u_0}{\L^2(\Omega_0)}^2 \, ,
    $$
    where the hidden constant is independent of $\nu$.
\end{proposition}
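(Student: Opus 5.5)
The plan is to redo the energy argument of Theorem~\ref{thm:well-pos} without using the discrete Poincar\'e inequality (Lemma~\ref{lem:dpi}) on the forcing term, which is the only place where $\nu^{-1}$ enters. Instead, the forcing will be bounded by $\Norm{\f}{L^1(\L^2;\QT)}$ times a maximum-in-time norm of $\uht$. That maximum will be controlled by nodal quantities, using that for $\ell\in\{0,1\}$ the reference function $\hu_{h\tau}$ is affine in time on each $\In$.

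\textbf{Step 1 (nodal energy identity).} For fixed $m\in\{1,\dots,N\}$ I would test \eqref{method:Z} with $\uht$ restricted to $\mathcal{Q}_1,\dots,\mathcal{Q}_m$, exactly as in the proof of Theorem~\ref{thm:well-pos}. The discrete Reynolds identity \eqref{eq:second} absorbs the $\cht$ term exactly, so no Gr\"onwall argument and no $\nu$-dependent constant are needed. Combined with \eqref{eq:node.stab.bound.lhs.term2} and the coercivity \eqref{DG-coerc}, this gives
\[
E_m:=\tfrac12\Norm{\uht(\cdot,t_m^-)}{\L^2(\Omega(t_m))}^2+\tfrac12\sum_{n=1}^{m-1}\Norm{\jump{\uht}_n}{\L^2(\Omega(\tn))}^2+\tfrac14\Norm{\uht(\cdot,0)}{\L^2(\Omega_0)}^2+\nu C_a\sum_{n=1}^m\int_{\In}\Norm{\uht}{1,h,t}^2\dt
\]
\[
\le \Norm{\f}{L^1(\L^2;\QT)}\, \max_{t\in[0,t_m]}\Norm{\uht(\cdot,t)}{\L^2(\Omega(t))}+\Norm{\u_0}{\L^2(\Omega_0)}^2 .
\]
Here the right-hand side is bounded by H\"older in time rather than by Young plus Poincar\'e, and the initial term $(\u_0,\uht(\cdot,0))_{\Omega_0}$ is handled by Young as before.

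\textbf{Step 2 (pointwise-in-time control, the key step).} On each $\In$ I write $\hu_{h\tau}(\cdot,t)=\lambda(t)\hu_{h\tau}(\cdot,\tnmo^+)+(1-\lambda(t))\hu_{h\tau}(\cdot,\tn^-)$ with $\lambda(t)\in[0,1]$; for $\ell=0$ the function is simply constant in time. Then \eqref{eq:equiv-vh-3}, with constants uniform in $t$, gives
\[
\Norm{\uht(\cdot,t)}{\L^2(\Omega(t))}\lesssim \Norm{\uht(\cdot,\tnmo^+)}{\L^2(\Omega(\tnmo))}+\Norm{\uht(\cdot,\tn^-)}{\L^2(\Omega(\tn))}.
\]
The first endpoint value is estimated as follows:
\begin{itemize}
\item for $n\ge2$, $\Norm{\uht(\cdot,\tnmo^+)}{\L^2(\Omega(\tnmo))}\le \Norm{\uht(\cdot,\tnmo^-)}{\L^2(\Omega(\tnmo))}+\Norm{\jump{\uht}_{n-1}}{\L^2(\Omega(\tnmo))}$;
\item for $n=1$, the endpoint value is $\uht(\cdot,0)$ itself.
\end{itemize}
Every term on the right is bounded by $\sqrt{E_j}$ for some $j\le m$. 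Hence, setting $X:=\max_{1\le m\le N}E_m$, we get $\max_{t}\Norm{\uht(\cdot,t)}{\L^2(\Omega(t))}^2\lesssim X$ with a $\nu$-independent constant. This is precisely where $\ell\le1$ is used: for $\ell\ge2$ the sup over $\In$ of the polynomial is not controlled by its endpoint values uniformly in $\tau_n$ without an extra interior test. I expect this step to be the main point to verify carefully, in particular that the equivalence constants of \eqref{eq:equiv-vh-3} do not depend on $t$ and that the Piola map does not spoil the convex-combination argument. It does not, since $\phi_t$ is linear at each fixed $t$.

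\textbf{Step 3 (closing).} Taking the maximum over $m$ in Step 1 gives $X\lesssim \Norm{\f}{L^1(\L^2;\QT)}X^{1/2}+\Norm{\u_0}{\L^2(\Omega_0)}^2$. Young's inequality then yields $X\lesssim \Norm{\f}{L^1(\L^2;\QT)}^2+\Norm{\u_0}{\L^2(\Omega_0)}^2$. Together with Step 2, which bounds $\Norm{\uht}{L^\infty(\L^2;\QT)}^2\lesssim X$, and the viscous term contained in $E_N$, this gives the claim with a constant independent of $\nu$.
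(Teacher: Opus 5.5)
Your proposal is correct and follows essentially the same route as the paper. Both arguments bound the forcing term by H\"older in time against $\Norm{\uht}{L^\infty(\L^2;\QT)}$, control $\Norm{\uht(\cdot,t)}{\L^2(\Omega(t))}$ on each $\In$ by its two endpoint values through the affine-in-time structure of $\hu_{h\tau}$ and \eqref{eq:equiv-vh-3}, use the triangle inequality with the time jumps to handle $\uht(\cdot,\tnmo^+)$, and finish with an absorption step. The only difference is cosmetic: you close with a single Young inequality on $X=\max_m E_m$, which gives the viscous term at the same time, whereas the paper absorbs an $\varepsilon$-weighted $L^\infty$ term and then returns to $m=N$ to recover the diffusion term.
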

\begin{proof}
    Since the case~$\ell = 0$ is simpler, we focus on the case~$\ell = 1$.
    We start by a simple but crucial calculation (similar arguments in simpler settings with fixed domains can be found in \cite{hansbo1990velocity,beirao2024pressure}).
    On a time interval $I_n$, $n=1,2,\ldots,N$, choose $\{ \psi_j^{(n)} \}_{j=0}^1$ as the Lagrange basis associated with the nodes~$t_{n-1}$ and~$t_n$. We thus have, by definition,
    $$
        \begin{aligned}
            & \uht(\bx, t) = \sum_{j = 0}^{1} (\phi \ba_{h, j})(\bx) \psi_j^{(n)}(t), \ 
            \textrm{ with } \\ 
            & \uht(\bx, t_{n-1}^{+}) =  (\phi_{t_{n-1}} \ba_{h, 0})(\bx) \ ,
            \quad \uht(\bx, t_{n}^{-}) =  (\phi_{t_{n}} \ba_{h, 1})(\bx) \, .
        \end{aligned}
    $$
    From to the last two identities above and \eqref{eq:equiv-vh-3}, we deduce
    \begin{equation*}
        \| \uht(\cdot, t_{n-1}^{+}) \|_{\L^2(\Omega(t_{n-1}))} \simeq \| \ba_{h, 0} \|_{\L^2(\Omega_0)} \, , \quad 
        \| \uht(\cdot, t_{n}^{-}) \|_{\L^2(\Omega(t_{n}))} \simeq \| \ba_{h, 1} \|_{\L^2(\Omega_0)} \, .
    \end{equation*}
    Combining the above equations with the regularity of the mapping~$\A$, it follows
    \begin{equation}\label{eq:koalino}
        \begin{aligned}
            \| \uht(\cdot, t) \|_{\L^2(\Omega(t))} & \le
            \sum_{j = 0}^{1} \| \phi \ba_{h, j} \|_{\L^2(\Omega(t))}
            \lesssim \sum_{j = 0}^{1} \| \ba_{h, j} \|_{L^2(\Omega_0)} \\
            & \lesssim \| \uht(\cdot, \tnmo^{+}) \|_{\L^2(\Omega(\tnmo))}
            + \| \uht(\cdot, t_{n}^{-}) \|_{\L^2(\Omega(t_{n}))}
            \qquad \textrm{ for all } t \in I_n \, ,
        \end{aligned}
    \end{equation}
    so that the $L^\infty(\L^2;\Qn)$ norm of~$\uht$ is controlled by the values of its $\L^2(\Omega(t))$ norm at the two extrema of~$I_n$.
    The remaining part of the argument takes the steps from the proof of Theorem~\ref{thm:well-pos}.
    By the H\"older inequality, the right-hand side in \eqref{eq:node.stab.bound.initial} can be bounded, for any $m \in \{1,2,\ldots,N\}$ and all~$\varepsilon > 0$, by 
    \begin{equation}\label{eq:low:bound}
        \mathrm{RHS} \leq \frac{1}{2 \varepsilon} \Norm{\f}{L^1(\L^2;\QT)}^2 
        +   \frac{\varepsilon}{2} \Norm{\uht}{L^\infty(\L^2;\QT)}^2  \\
        + \Norm{\u_0}{\L^2(\Omega_0)}^2 + \frac{1}{4}\Norm{\uht(\cdot, 0)}{\L^2(\Omega_0)}^2 .
    \end{equation}
    As a consequence, we can observe that the maximum over all $m \in \{1,2,\ldots,N\}$ of the LHS in~\eqref{eq:aux-LHS-RHS} is controlled by the above bound on RHS.
    Using~\eqref{eq:node.stab.bound.lhs.bound} and the triangle inequality, we obtain 
    $$
        \frac{1}{2} \Norm{\uht(\cdot, t_m^+)}{\L^2(\Omega(t_m))}^2 \le \Norm{\uht(\cdot, t_m^-)}{\L^2(\Omega(t_m))}^2 + \Norm{\jump{\uht}_m}{\L^2(\Omega(t_m))}^2 \, ,
    $$
    which, together with the observation above, easily leads to the bound
    \begin{equation*}
        \begin{aligned}
            \frac{1}{2} \, \max_{m=0,\ldots,N} \ \boldsymbol \Xi_m \
            \le \frac{1}{2 \varepsilon} \Norm{\f}{L^1(\L^2;\QT)}^2 
            +   \frac{\varepsilon}{2} \Norm{\uht}{L^\infty(\L^2;\QT)}^2 
            + \Norm{\u_0}{\L^2(\Omega_0)}^2 + \frac{1}{4}\Norm{\uht(\cdot, 0)}{\L^2(\Omega_0)}^2 \, ,
        \end{aligned}
    \end{equation*}
    with
    $$
        \boldsymbol \Xi_m :=
        \begin{cases}
            \Norm{\uht(\cdot, 0)}{\L^2(\Omega_0)}^2 & \quad \textrm{if } m=0 \, , \\
            \max \big\{ \Norm{\uht(\cdot, t_m^-)}{\L^2(\Omega(t_m))}^2 \, , \ \frac{1}{2} \Norm{\uht(\cdot, t_m^+)}{\L^2(\Omega(t_m))}^2 \big\} & \quad \textrm{if } m=1, 2, \ldots, N-1 \, , \\
            \Norm{\uht(\cdot, t_N^-)}{\L^2(\Omega(t_N))}^2 & \quad \textrm{if } m=N \, .
        \end{cases}
    $$
    Applying \eqref{eq:koalino} and simple manipulations, for all~$\varepsilon > 0$, we obtain 
    $$
        \Norm{\uht}{L^\infty(\L^2;\QT)}^2 
        \le
        C \Big( \frac{1}{2 \varepsilon} \Norm{\f}{L^1(\L^2;\QT)}^2 
        + \frac{\varepsilon}{2} \Norm{\uht}{L^\infty(\L^2;\QT)}^2 
        + \Norm{\u_0}{\L^2(\Omega_0)}^2 \Big) \, ,
    $$
    where the constant $C$ is independent of $\nu$. The bound on~$\Norm{\uht}{L^\infty(\L^2; \QT)}^2$ now follows taking~$\varepsilon$ sufficiently small. 
    In order to recover the analogous inequality also on the term $\sum_{n = 1}^{N} \int_{\In} \nu \|\uht\|_{1,h,t}^2 \dt $, we pursue the following steps: (1) start from \eqref{eq:node.stab.bound.initial} and \eqref{eq:node.stab.bound.lhs.bound} with $m=N$; (2) bound the right-hand side first applying \eqref{eq:low:bound} with $\varepsilon=1$ and afterwards using the bound on $\Norm{\uht}{L^\infty(\L^2;\QT)}^2$.
\end{proof}
We now state (in brief format) also the corresponding convergence result.
\begin{proposition}[Discrete error estimate for the low-order cases]\label{prop:loc}
    Let the polynomial degree in time $\ell \in \{0,1\}$. Then, assuming~$\hu$ sufficiently smooth, 
    the following improved bound for the discrete error also holds:
    \begin{equation*}
        \begin{aligned}
            \Norm{\Piht \eu}{L^\infty(\L^2;\QT)}^2  
            + \nu \sum_{n = 1}^N \int_{\In} \Norm{\Piht \eu(\cdot, t)}{1,h,t}^2 \dt 
              \lesssim (1+\nu) \, (h^k + \tau^{\ell+1}) {\cal R}(\hu),
        \end{aligned}
    \end{equation*}
    where the hidden constant is independent of $\nu$, and~${\cal R}(\hu)$ gathers the regularity terms of the exact solution.
\end{proposition}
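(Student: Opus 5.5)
The plan is to combine the error equation~\eqref{eq:error.equation.final} with the $\nu$-robust stability argument of Proposition~\ref{prop:los}, applied to $\Piht\eu \in \Zht$. Compared with Theorem~\ref{theo:main:conv}, the difference is how the consistency terms are paired with the test function. There, every term $M_i$ was bounded against $(\nu\int_0^T\|\zht\|_{1,h,t}^2)^{1/2}$ through Lemma~\ref{lem:dpi}, which is what produced the factors $\nu^{-1}$. Here, every term without a natural factor $\nu$ will be bounded against $\Norm{\zht}{L^\infty(\L^2;\QT)}$ or $\Norm{\zht}{L^2(\L^2;\QT)}$. I will write the right-hand side as a linear functional $G(\zht)$ and establish
\[
G(\zht)\lesssim \mathcal{G}\,\Norm{\zht}{L^\infty(\L^2;\QT)} + \nu^{1/2}\mathcal{G}'\Big(\nu\int_0^T\Norm{\zht}{1,h,t}^2\dt\Big)^{1/2},
\]
with $\mathcal{G},\mathcal{G}'\lesssim (h^k+\tau^{\ell+1})\,{\cal R}(\hu)^{1/2}$, independently of $\nu$. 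Then I repeat the proof of Proposition~\ref{prop:los}: test with $\Piht\eu$ truncated after slab $m$, use the lower bound~\eqref{eq:node.stab.bound.lhs.bound} (same argument as~\eqref{eq:err.est.coercivity}), take the maximum over $m$, use the two-point control~\eqref{eq:koalino} (valid for $\ell\in\{0,1\}$) to pass from nodal values to the $L^\infty(\L^2;\QT)$ norm, and absorb with Young's inequality and $\varepsilon$ small. This yields the stated bound with the factor $(1+\nu)$.

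The term-by-term estimates follow the earlier proof with modified pairings. $M_1$ and $M_2$ are already $L^2$ pairings, bounded by $h^{k+1}(\ldots)\Norm{\zht}{L^\infty(\L^2)}$ after using $L^1$ in time. For $M_3$, the bounds~\eqref{eq:estimate-M3-a}--\eqref{eq:estimate-M3-b} already pair with $\Norm{\zht}{\L^2(\Qn)}$, so I simply avoid invoking Lemma~\ref{lem:dpi}. $M_6$ carries the factor $\nu$, so the previous estimate gives $\nu^{1/2}(\ldots)(\nu\int\|\zht\|^2_{1,h,t})^{1/2}$, which is harmless. $M_4$ is bounded via~\eqref{eq:M4-a}--\eqref{eq:M4-b} against $h^k\Norm{\zht}{L^2(\L^2)}$ without Poincaré.

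The main obstacle is $M_5=\cht(\bw;\zht,\phi((\Pt\circ\IBDM)\hu-\IBDM\hu))$, since here the gradient and facet jumps fall on $\zht$. The earlier estimate produced $\nu^{-1/2}$ precisely because of this. I would first integrate by parts elementwise using the structure of $\cht$, namely identity~\eqref{div-DG} in the form $\cht(\bw;\zht,\v)=-\cht(\bw;\v,\zht)-\int(\divx\bw)\zht\cdot\v$, with $\v=\phi((\Pt\circ\IBDM)\hu-\IBDM\hu)$. This moves all derivatives and jumps onto $\v$. The volume term is bounded by $\tau^{\ell+1}\Norm{\zht}{L^2(\L^2)}$. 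The term $\cht(\bw;\v,\zht)$ is handled like $M_4$: $\Nablax\v$ and the jumps of $\v$ are controlled through~\eqref{eq:equiv-vh.1}, \eqref{eq:trace.NEW}, \eqref{eq:trace.NEW.NEW} and Lemma~\ref{lemma:estimates-Pt} applied to $\IBDM\hu$ in $\H^1$-type norms. This gives $\tau^{\ell+1}\big(\Norm{\dpt^{(\ell+1)}\hu}{L^2(\H^1)}+\ldots\big)\Norm{\zht}{L^2(\L^2)}$, with the $h^{-1/2}$ from the trace inequality on $\zht$ compensated by the $h^{1/2}$ trace factor on $\v$.

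Throughout, I will not optimize powers of $h$, to match the stated rate $h^k+\tau^{\ell+1}$. Finally, I will collect all regularity norms (the $L^2$, $H^1$, $H^2$ norms of $\dpt^{(\ell+1)}\hu$, the $H^{k+1}$ norms of $\hu$ and $\dpt\hu$, the $L^\infty$-in-time norm, and $\u_0$) into ${\cal R}(\hu)$.
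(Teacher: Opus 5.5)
Your overall strategy matches the paper's. The $\nu$-independent $L^\infty(\L^2;\QT)$ control from the argument of Proposition~\ref{prop:los} lets you bound $M_1$--$M_4$ without Lemma~\ref{lem:dpi}. $M_6$ is harmless because of its factor $\nu$. $M_5$ is rewritten through the (anti)symmetry of $\cht$, so that derivatives and jumps fall on $\v:=\phi((\Pt\circ\IBDM)\hu-\IBDM\hu)$; your version, which keeps the $\divx\bw$ term from~\eqref{div-DG}, is slightly more careful than the paper's.

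The gap is in the facet part of $\cht(\bw;\v,\zht)$, namely $\sum_{F\in\Fho}((\bw\cdot\nFt)\jump{\v}_{\Ft},\av{\zht}_{\Ft})_{\Ft}$. After the discrete trace inequality~\eqref{eq:trace.NEW.NEW} on $\zht$, you need $h_F^{-1/2}\Norm{\jump{\v}_{\Ft}}{\L^2(\Ft)}$ to be of order $\tau^{\ell+1}$. A continuous trace inequality only gives
\[
h_K^{-1/2}\Norm{\hv}{\L^2(\partial K)} \lesssim h_K^{-1}\Norm{\hv}{\L^2(K)} + \Norm{\Nablay\hv}{\L^2(K)}, \qquad \hv = (\Pt-\Id)\IBDM\hu .
\]
So the factor $h^{1/2}$ you invoke only helps the gradient part. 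The $L^2$ part leaves $h^{-1}\tau^{\ell+1}\Norm{\dpt^{(\ell+1)}\IBDM\hu}{L^1(0,T;\L^2(\Omega_0))}$, which blows up as $h\to 0$ at fixed $\tau$. Nothing else in your list compensates for this: $\v$ is small only in $\tau$, not in $h$, and being BDM-valued its tangential jumps do not vanish.

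The missing idea is exactly the step the paper's proof supplies. $\Pt$ acts only in time, so it commutes with spatial jumps, and $\hu(\cdot,t)\in\H^1(\Omega_0)$ has no jumps. Hence $\jump{(\Pt-\Id)\IBDM\hu}=(\Pt-\Id)\jump{\IBDM\hu-\hu}$, and the factor $h^{-1}$ is absorbed by the BDM interpolation error. The paper then uses the $L^\infty(0,T)$-stability of $\Pt$ to obtain $h^k\semiNorm{\hu}{L^1(0,T;\H^{k+1}(\Omega_0))}\Norm{\zht}{L^\infty(\L^2;\QT)}$. Alternatively, apply Lemma~\ref{lemma:estimates-Pt} and then Lemma~\ref{lemma:estimates-IBDM} with $m=1$ to $\dpt^{(\ell+1)}\hu$; this gives the bound $\tau^{\ell+1}\semiNorm{\dpt^{(\ell+1)}\hu}{L^1(0,T;\H^1(\Omega_0))}$ that you asserted. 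With this step inserted, the rest of your argument goes through as in the paper.
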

\begin{proof}
    We omit the full proof, as it is essentially a combination of the arguments in Theorem \ref{theo:main:conv} and Proposition \ref{prop:los}. In particular, by the same steps in the proof of Proposition \ref{prop:los}, we now have control on~$\Norm{\Piht \eu}{L^\infty(\L^2;\QT)}^2$, which allows us to handle the terms~$M_1$, $M_3$, and~$M_4$ without using the diffusion term on the left-hand side. 
    Therefore, we limit ourselves to show \emph{briefly} an alternative bound for the term $M_5$. 
    By the antisymmetric property of~$\cht(\cdot; \cdot, \cdot)$, we can write
    \begin{alignat*}{3}
        \nonumber
         M_5(\zht)
        & = - \cht(\bw; \phi((\Pt \circ \IBDM) \hu - \IBDM \hu),\zht) \\
        \nonumber
        & = - \sum_{n = 1}^N \int_{\In} \Big[\big(\Nablax \phi((\Pt \circ \IBDM) \hu - \IBDM \hu) \bw, \zht \big)_{\Omega_h(t)} \\
        \nonumber
        & \quad + \sum_{F \in \Fho} \big((\bw \cdot \nFt) \jump{\phi((\Pt \circ \IBDM) \hu - \IBDM \hu)}_{\Ft}, \av{\zht}_{\Ft} \big)_{\Ft}\Big] \dt \\
        & =: M_5^{(a)}(\zht) + M_5^{(b)}(\zht),
    \end{alignat*}
    with $\zht=\Piht \eu$. 
    
    We bound the first term by the H\"older inequality and the boundedness of $\bw$ in $\L^\infty(\QT)$, as follows:
    $$
        M_5^{(a)}(\zht) \lesssim \bigg( 
        \int_0^T \Norm{\Nablax \phi\big((\Pt \circ \IBDM) \hu - \IBDM \hu\big)}{\L^2(\Omega_h(t))} \dt 
        \bigg)  
        \Norm{\zht}{L^{\infty}(\L^2;\QT)} \, .
    $$
    We then apply \eqref{eq:equiv-vh.1} and, as usual, Lemmas~\ref{lemma:estimates-IBDM} and~\ref{lemma:estimates-Pt} (similarly to the last steps in~\eqref{eq:M5-a}) to obtain
    $$
        \begin{aligned}
            M_5^{(a)} (\zht) & \lesssim 
            \bigg(
            \int_0^T \Big( \| (\Id - \Pt) \Nablay \IBDM \hu \|_{\L^2(\Omega_{h}(0))} 
            + \Norm{(\Id - \Pt) \IBDM \hu}{\L^2(\Omega_{0})}\Big) \dt  \bigg) 
            \| \zht \|_{L^{\infty}(\L^2;\QT)} \\
            & \lesssim \tau^{\ell+1}
            \Norm{\dpt^{(\ell+1)} \hu}{L^1(0, T; \H^1(\Omega_0))} \Norm{\zht}{L^{\infty}(\L^2;\QT)} \, .
        \end{aligned}
    $$
    Applying \eqref{eq:trace.NEW.NEW} to $\zht$, the H\"older inequality, standard manipulations, and~\eqref{eq:equiv-vh-2} yield
    $$
        \begin{aligned}
            M_5^{(b)}(\zht) & \lesssim 
            \bigg( \int_0^T \!\! 
            \sum_{F \in \Fho} h_F^{-1/2} \Norm{\jump{\phi((\Pt \circ \IBDM) \hu - \IBDM \hu)}}{\L^2(\Ft)} \dt
            \bigg) 
            \Norm{\zht }{L^{\infty}(\L^2;\QT)} \\
            & \lesssim \bigg( \int_0^T \!\! 
            \sum_{F \in \Fho} h_F^{-1/2} \Norm{\jump{(\Pt - {\cal I}) \circ \IBDM \hu}}{\L^2(F)}
            \bigg)  
            \Norm{\zht}{L^{\infty}(\L^2;\QT)} \, .
        \end{aligned}
    $$
    First writing $\jump{(\Pt - \Id) \circ \IBDM \hu} = (\Pt - \Id) \jump{\IBDM \hu}$, then by
    the continuity in $L^\infty(0, T)$ of the $\Pt$ operator, finally recalling that the jumps of $\hu$ vanish, we obtain 
    $$
        \begin{aligned}
            M_5^{(b)}(\zht) & \lesssim \bigg(\int_0^T 
            \sum_{F \in \Fho} h_F^{-1/2} \Norm{\jump{\hu - \IBDM \hu}}{\L^2(F)} \dt
            \bigg) 
            \Norm{\zht}{L^{\infty}(\L^2;\QT)} \\
            & \lesssim h^k \semiNorm{\hu}{L^{1}(0,T; \H^{k+1}(\Omega_0))} \,
            \Norm{\zht}{L^{\infty}(\L^2;\QT)},
        \end{aligned}
    $$
    where, in the last step, we used trace estimates and Lemma~\ref{lemma:estimates-IBDM}.
\end{proof}
\begin{remark}[$\nu$-robustness for high-order approximations]\label{rem:nu-robustenss-high-degree}
    Extending the above stronger results also to the case~$\ell \ge 2$ would require introducing suitable test functions, analogous to the case of fixed domains (see~\cite{Gomez:2026b,Beirao-Gomez-Dassi:2025}). Additionally, one would need to cope with the specific difficulty of the very strong space--time tangling in the present setting induced by the Piola map. Furthermore, 
    this may also require the combination with suitable discrete integration in time, to be incorporated in the definition of the scheme. Such investigations are beyond the scope of the present work and will be the objective of future studies.
    \eremk
\end{remark}

\section{Numerical tests}\label{sec:numerical-tests}
In this section, we numerically validate the theoretical findings presented in the previous sections. Specifically, we assess the performance of the proposed method through a quantitative convergence and robustness analysis in Sections~\ref{sec:test1_conv_rates}, \ref{sec:test2_nu-robustness}, and~\ref{sec:test3_pressure-robustness}, while a qualitative analysis is carried out in Section~\ref{sec:test4_ovalization}. 

For the quantitative tests, we consider the reference domain $\Omega_0 = (0, 1)^2$, whereas for the qualitative test, the initial domain is defined as the unit disk~$\Omega_0 \coloneqq \{ \by \in \mathbb{R}^2 : |\by| < 1 \}$. 
In all the numerical experiments, we set the time interval to $(0, T)$ with $T=1$, the interior-penalty stabilization parameter to~$\sigma = 10$, and the spatial polynomial degree to~$k=1$. Moreover, we employ uniform time partitions.

The ALE maps considered in the quantitative and qualitative tests are defined as
\begin{equation}\label{eq:ALEMaps-tests}
    \A^{\mathrm{S}} (\by , t) \coloneqq \left( y_1(1+ty_2) ,\,  y_2 \right) \quad \text{and} \quad \A^{\mathrm{D}} (\by , t) \coloneqq \left( (1+2t)y_1,\, \frac{1}{1+2t} y_2 \right),
\end{equation}
respectively (see Figure~\ref{fig:ALE_map_square_circle} for a graphical representation).
\begin{figure}
    \centering
    \begin{subfigure}[b]{0.45\textwidth}
        \centering
            \includegraphics[width=\linewidth]{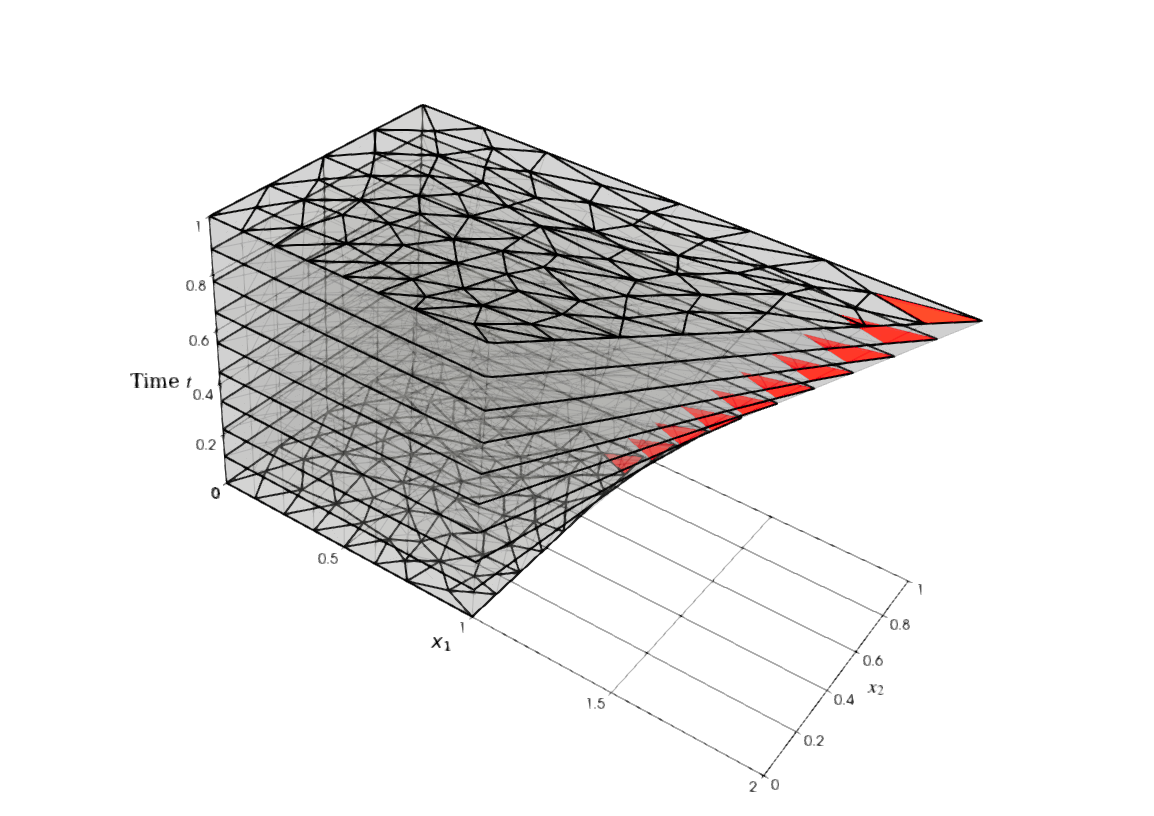}
    \end{subfigure}
    \hfill
    \begin{subfigure}[b]{0.45\textwidth}
        \centering
            \includegraphics[width=\linewidth]{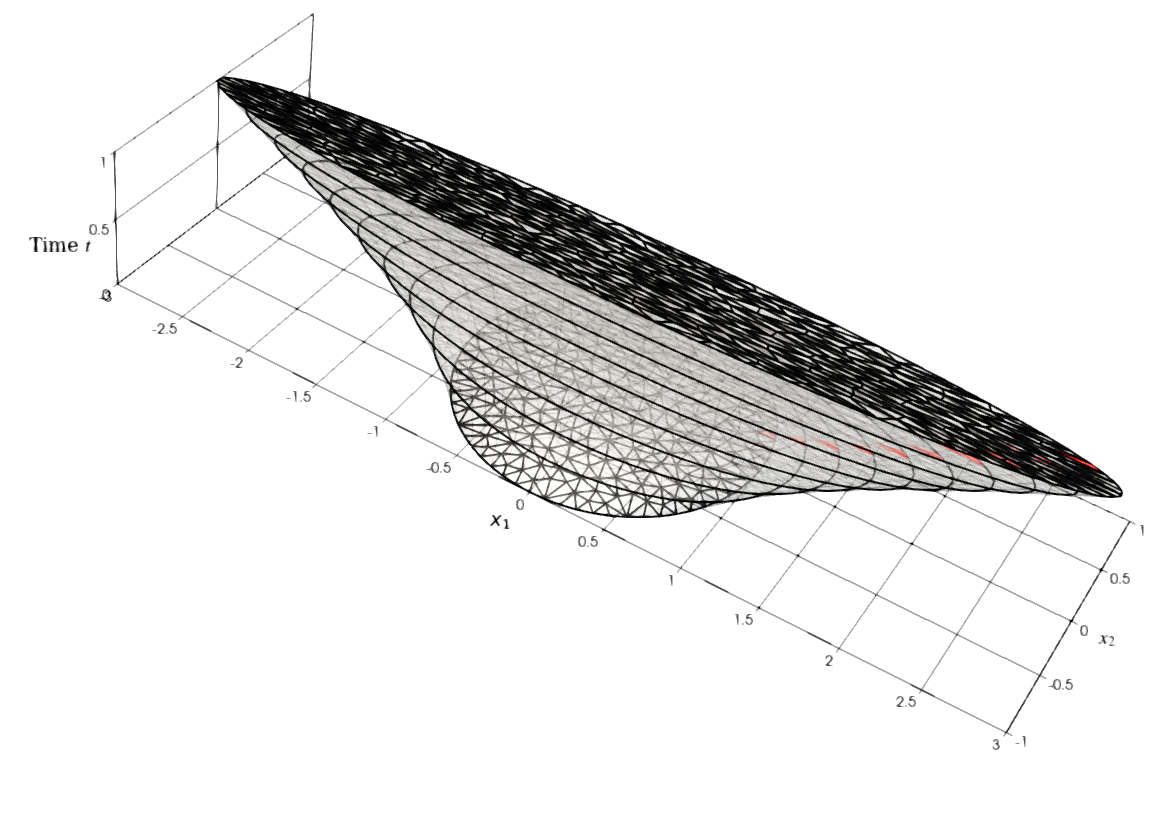}
    \end{subfigure}
    \caption{Space--time evolution of the domain $\Omega_0$. \textbf{Left panel:} unit square with ALE map~$\A^{\mathrm{S}}$. \textbf{Right panel:} unit disk with ALE map~$\A^{\mathrm{D}}$ (cf. \eqref{eq:ALEMaps-tests}).}
    \label{fig:ALE_map_square_circle}
\end{figure}
We consider the following error measures:
\begin{equation*}
    \begin{aligned}
        \mathrm{velERR}_{T} & \coloneqq \Norm{\u (\cdot, T)- \uht (\cdot, T)}{\L^2(\Omega(T))}, \quad \mathrm{preERR}_{T} \coloneqq \Norm{p (\cdot, T)- \pht (\cdot, T)}{\L^2(\Omega(T))} \\
        \mathrm{velERR}_{h,\tau} & \coloneqq \Big[ \Norm{\u (\cdot, T)- \uht (\cdot, T)}{\L^2(\Omega(T))}^2 
         + \nu \sum_{n = 1}^N \int_{\In} \Norm{\u (\cdot, t)- \uht (\cdot, t)}{1,h,t}^2 \dt \Big]^{1/2}.
    \end{aligned}
\end{equation*}
\subsection{Test 1: convergence rates}\label{sec:test1_conv_rates}
In this section, we assess the convergence rates predicted by Corollary~\ref{cor:u-minus-uh.error.est}. Specifically, in Section~\ref{sec:test1_conv_rates.space}, we verify the spatial convergence rates by prescribing suitable manufactured solutions $(\u, p)$ such that the temporal error is negligible. Conversely, in Section~\ref{sec:test1_conv_rates.time}, we verify the temporal convergence rates by ensuring a negligible spatial error. Finally, in Section~\ref{sec:test1_conv_rates.space-time}, we perform a comprehensive space--time convergence test. In all these tests, we set the viscosity parameter~$\nu = 1$.
\subsubsection{Convergence in space}\label{sec:test1_conv_rates.space}
We prescribe the exact solution $(\u,p)$ to problem~\eqref{eq:model-problem-ALE} as
\begin{equation*}
    \u (\bx,t) =
    \begin{pmatrix} 
        \cos(x_2) + t \left( \frac{x_1}{1 + t x_2} \sin(\frac{x_1}{1 + t x_2}) + \cos(\frac{x_1}{1 + t x_2}) \right) \\ 
        \sin(\frac{x_1}{1 + t x_2}) 
    \end{pmatrix} 
    \quad \text{and} \quad p (\bx,t) = t \cos(\pi x_1) \cos(\pi x_2) .
\end{equation*}
The forcing term $\f$ and the Dirichlet boundary conditions are derived accordingly.

Note that, when mapped to the reference configuration, the velocity $\hu$ is a polynomial of degree $1$ with respect to time. Conversely, due to the nonlinear transformation of the coordinates inside the arguments of the trigonometric functions, the mapped pressure $\widetilde{p}$ is not a polynomial in time. Nevertheless, thanks to the pressure-robustess property of the proposed formulation, which will be analyzed in detail in Section~\ref{sec:test3_pressure-robustness} below, the temporal discretization error stemming from the pressure component does not pollute the velocity error.
Consequently, by setting the degree of approximation in time to $\ell = 1$, the temporal component of the velocity error is negligible compared to the spatial error. We fix the time step to $\tau = 0.0625$ and consider a sequence of spatial meshes for $\Omega_0$ with mesh sizes~$h \approx 0.149582, \, 0.0793161,\,  0.0325956,\,  0.01227$. As shown in Figure~\ref{fig:convInSpace_convInTime}, the error $\mathrm{velERR}_{h,\tau}$ decays at the expected rate of $\mathcal{O}(h)$.
\subsubsection{Convergence in time}\label{sec:test1_conv_rates.time}
We now prescribe the exact solution $(\u,p)$ to problem~\eqref{eq:model-problem-ALE} as
\begin{equation*}
    \u (\bx,t) = \sin(2\pi t)
    \begin{pmatrix}
        t x_2 + t^2 \\
        t
    \end{pmatrix}
    \quad \text{and} \quad p (\bx,t) = 0.
\end{equation*}
The forcing term $\f$ and the Dirichlet boundary conditions are obtained accordingly. 

In this case, the corresponding~$\hu$ is a polynomial of degree $1$ with respect to the spatial variables. Consequently, since the spatial polynomial degree is fixed to~$k=1$, the spatial components are represented exactly by our method, making the spatial error negligible compared to the temporal one. We fix the mesh size to $h = 0.149582$ and consider a sequence of systematically halved time steps $\tau \in \{ 0.5, 0.25, 0.125, 0.0625, 0.03125, 0.015625 \}$ over the time interval $(0,1)$. As shown in Figure~\ref{fig:convInSpace_convInTime}, the error $\mathrm{velERR}_{h,\tau}$ evaluated for different values of the time-DG degree $\ell \in \{0,\, 1,\, 2\}$ decays at the expected rate of $\mathcal{O}(\tau^{\ell+1})$.
\begin{figure}[ht]
    \centering
    \begin{subfigure}[b]{0.45\textwidth}
        \centering
        \begin{tikzpicture}
            \begin{loglogaxis}[
                    width=\textwidth,     
                    height=0.75\textwidth,   
                    xlabel={$h$},
                    xmin=0.01, xmax=0.2,
                    ymin=1e-4, ymax=1e-1, 
                    grid=both,
                    major grid style={solid, line width=0.4pt, draw=black!30},
                    minor grid style={solid, line width=0.2pt, draw=black!10},
                    legend pos=south east,
                    legend style={nodes={scale=0.75, transform shape}},
                    ]
                \addplot[blue, mark=*, line width=1.5] table[x=h,y=errTotVelSC_degree_1] {NumericalResults/errTotVelSC.txt};
                \addlegendentry{$k=1$}
                \addplot[blue, dashed, domain=0.012:0.15, line width=1.0] {0.03*x};
                \addlegendentry{$\mathcal{O}(h)$}
            \end{loglogaxis}
        \end{tikzpicture}
    \end{subfigure}
    \hfill
    \begin{subfigure}[b]{0.45\textwidth}
        \centering
        \begin{tikzpicture}
            \begin{loglogaxis}[
                    width=\textwidth,     
                    height=0.75\textwidth,   
                    xlabel={$\tau$},
                    xmin=1e-2, xmax=1,
                    ymin=1e-5, ymax=10, 
                    grid=both,
                    major grid style={solid, line width=0.4pt, draw=black!30},
                    minor grid style={solid, line width=0.2pt, draw=black!10},
                    legend pos=south east,
                    legend style={nodes={scale=0.75, transform shape}},
                    ]
                \addplot[black, mark=*, line width=1.5] table[x=tau,y=errTotVelTC_degree_0] {NumericalResults/errTotVelTC.txt};
                \addlegendentry{$\ell=0$}
                \addplot[black, dashed, domain=0.015625:0.5, line width=1.0] {12*x};
                \addlegendentry{$\mathcal{O}(\tau)$}
                \addplot[blue, mark=square, line width=1.5] table[x=tau,y=errTotVelTC_degree_1] {NumericalResults/errTotVelTC.txt};
                \addlegendentry{$\ell=1$}
                \addplot[blue, dashed, domain=0.015625:0.5, line width=1.0] {10*x^2};
                \addlegendentry{$\mathcal{O}(\tau^2)$}
                \addplot[red, mark=triangle, line width=1.5] table[x=tau,y=errTotVelTC_degree_2] {NumericalResults/errTotVelTC.txt};
                \addlegendentry{$\ell=2$}
                \addplot[red, dashed, domain=0.015625:0.5, line width=1.0] {5*x^3};
                \addlegendentry{$\mathcal{O}(\tau^3)$}
            \end{loglogaxis}
        \end{tikzpicture}
    \end{subfigure}
    \caption{Test 1: Convergence plots for the velocity error $\mathrm{velERR}_{h,\tau}$ in space (left panel) and time (right panel).}  \label{fig:convInSpace_convInTime}
\end{figure}
\subsubsection{Space--time convergence}\label{sec:test1_conv_rates.space-time}
In this test, we prescribe the smooth exact solution $(\u,p)$ to problem~\eqref{eq:model-problem-ALE} as
\begin{equation*}
    \u(\bx, t) = \pi \cos(\pi t) 
    \begin{pmatrix} 
        \sin(\pi x_1) \cos(\pi x_2) \\ 
        -\cos(\pi x_1) \sin(\pi x_2) 
    \end{pmatrix}
    \quad \text{and} \quad
    p(\bx, t) = \cos(\pi t) \cos(\pi x_1) \cos(\pi x_2),
\end{equation*}
with the corresponding forcing term~$\f$ and Dirichlet boundary conditions.

In this test, we verify the simultaneous space--time convergence rates by matching the discretization parameters to equilibrate the spatial and temporal errors. Specifically, we evaluate the error $\mathrm{velERR}_{h,\tau}$ by coupling the mesh size and time step according to $\tau \simeq h^{\frac{1}{\ell+1}}$, whereas for the final-time errors $\mathrm{velERR}_{T}$ and~$\mathrm{preERR}_{T}$, we impose the relation $\tau \simeq h^{\frac{2}{\ell+1}}$. The tests are performed on the sequence of meshes with sizes $h \approx 0.149582,\,  0.0793161,\, 0.0325956,\,  0.01227$.

As shown in Figure~\ref{fig:space_time_convergence_tot_and_final_time}, the error $\mathrm{velERR}_{h,\tau}$ decays at the expected rate of $\mathcal{O}(h)$. The velocity error at the final time step, $\mathrm{velERR}_{T}$, exhibits an $\mathcal{O}(h^2)$ reduction. Regarding the pressure error $\mathrm{preERR}_{T}$, although the present theoretical analysis does not explicitly cover its convergence bounds, we empirically observe a decay rate of $\mathcal{O}(h)$.
\begin{figure}[ht]
    \centering
    \begin{subfigure}[b]{0.45\textwidth}
        \centering
        \begin{tikzpicture}
            \begin{loglogaxis}[
                    width=\textwidth,     
                    height=0.75\textwidth,   
                    xlabel={$h$},
                    xmin=0.01, xmax=0.2,
                    ymin=0.01, ymax=0.5, 
                    grid=both,
                    major grid style={solid, line width=0.4pt, draw=black!30},
                    minor grid style={solid, line width=0.2pt, draw=black!10},
                    legend pos=south east,
                    legend style={nodes={scale=0.75, transform shape}},
                    ]
                \addplot[black, mark=*, line width=1.5] table[x=h,y=errTotVel_degree_0] {NumericalResults/errTotVel.txt};
                \addlegendentry{$\ell=0$}
                \addplot[blue, mark=square, line width=1.5] table[x=h,y=errTotVel_degree_1] {NumericalResults/errTotVel.txt};
                \addlegendentry{$\ell=1$}\addplot[red, mark=triangle, line width=1.5] table[x=h,y=errTotVel_degree_2] {NumericalResults/errTotVel.txt};
                \addlegendentry{$\ell=2$}
                \addplot[black, dashed, domain=0.012:0.15, line width=1.0] {x};
                \addlegendentry{$\mathcal{O}(h)$}
            \end{loglogaxis}
        \end{tikzpicture}
    \end{subfigure}
    \hfill
    \begin{subfigure}[b]{0.45\textwidth}
        \centering
        \begin{tikzpicture}
            \begin{loglogaxis}[
                    width=\textwidth,     
                    height=0.75\textwidth,   
                    xlabel={$h$},
                    xmin=0.01, xmax=0.2,
                    ymin=1e-5, ymax=10, 
                    grid=both,
                    major grid style={solid, line width=0.4pt, draw=black!30},
                    minor grid style={solid, line width=0.2pt, draw=black!10},
                    legend pos=south east,
                    legend style={nodes={scale=0.5, transform shape}},
                    ]
                \addplot[black, mark=*, line width=1.5] table[x=h,y=errFinalTimeL2Vel_degree_0] {NumericalResults/errFinalTimeL2Vel.txt};
                \addlegendentry{$\ell=0$, $\u$}
                \addplot[blue, mark=*, line width=1.5] table[x=h,y=errFinalTimeL2Vel_degree_1] {NumericalResults/errFinalTimeL2Vel.txt};
                \addlegendentry{$\ell=1$, $\u$}
                \addplot[red, mark=*, line width=1.5] table[x=h,y=errFinalTimeL2Vel_degree_2] {NumericalResults/errFinalTimeL2Vel.txt};
                \addlegendentry{$\ell=2$, $\u$}
                \addplot[black, domain=0.012:0.15, line width=1.0] {3*x^2};
                \addlegendentry{$\mathcal{O}(h^2)$}
                \addplot[black, mark=square*, line width=1.5] table[x=h,y=errFinalTimeL2Pre_degree_0] {NumericalResults/errFinalTimeL2Pre.txt};
                \addlegendentry{$\ell=0$, $p$}
                \addplot[blue, mark=square*, line width=1.5] table[x=h,y=errFinalTimeL2Pre_degree_1] {NumericalResults/errFinalTimeL2Pre.txt};
                \addlegendentry{$\ell=1$, $p$}
                \addplot[red, mark=square*, line width=1.5] table[x=h,y=errFinalTimeL2Pre_degree_2] {NumericalResults/errFinalTimeL2Pre.txt};
                \addlegendentry{$\ell=2$, $p$}
                \addplot[black, dashed, domain=0.012:0.15, line width=1.0] {20*x};
                \addlegendentry{$\mathcal{O}(h)$}
            \end{loglogaxis}
        \end{tikzpicture}
    \end{subfigure}
    \caption{Test 1: Space--time convergence analysis under simultaneous mesh refinement and time-step reduction for different values of the time-DG degree $\ell \in \{ 0, 1, 2\}$. Error $\mathrm{velERR}_{h,\tau}$ with $\tau \simeq h^{\frac{1}{\ell+1}}$ (left panel) and errors $(\mathrm{velERR}_{T},\mathrm{preERR}_{T})$ with $\tau \simeq h^{\frac{2}{\ell+1}}$ (right panel).}
    \label{fig:space_time_convergence_tot_and_final_time}
\end{figure}
\subsection{Test 2: viscosity-robustness}\label{sec:test2_nu-robustness}
In this section, we investigate the robustness of the proposed method with respect to the fluid viscosity~$\nu$. For this analysis, we adopt the same smooth exact solution $(\u,p)$ used in the space--time convergence test of Section~\ref{sec:test1_conv_rates.space-time}. We fix the spatial mesh size to $h=0.149582$ and evaluate the method's behavior for $\nu \in \{1, 10^{-1}, 10^{-2}, \dots, 10^{-10}\}$.
The numerical results, reported in Figure~\ref{fig:nuRobustness}, show that the error $\mathrm{velERR}_{h,\tau}$ has an initial small increase and then stabilizes, remaining essentially constant for smaller values of $\nu$ for all tested time-DG degrees $\ell \in \{ 0, 1, 2, 3 \}$. Notice that, while our current theoretical framework guarantees $\nu$-robustness strictly for the lowest-order cases $\ell \in \{ 0,1 \}$, the numerical evidence strongly indicates that this desirable property extends to higher-order temporal approximations (with~$\ell \geq 2$) as well (see also Remark \ref{rem:nu-robustenss-high-degree}).
\begin{figure}[ht]
    \centering
    \begin{tikzpicture}[scale = 0.7]
        \begin{loglogaxis}[
                width=0.7\textwidth,     
                height=0.55\textwidth,   
                xlabel={$\nu$},
                xmin=1e-10, xmax=2,
                ymin=1e-1, ymax=10, 
                grid=both,
                major grid style={solid, line width=0.4pt, draw=black!30},
                minor grid style={solid, line width=0.2pt, draw=black!10},
                legend pos=north west,
                ]
            \addplot[black, mark=*, line width=1.5] table[x=nu,y=errTotVelNu_degree_0] {NumericalResults/errTotVelNu.txt};
            \addlegendentry{$\ell=0$}
            \addplot[blue, mark=square, line width=1.5] table[x=nu,y=errTotVelNu_degree_1] {NumericalResults/errTotVelNu.txt};
            \addlegendentry{$\ell=1$}
            \addplot[red, mark=triangle, line width=1.5] table[x=nu,y=errTotVelNu_degree_2] {NumericalResults/errTotVelNu.txt};
            \addlegendentry{$\ell=2$}
            \addplot[cyan, mark=o, line width=1.5] table[x=nu,y=errTotVelNu_degree_3] {NumericalResults/errTotVelNu.txt};
            \addlegendentry{$\ell=3$}
        \end{loglogaxis}
    \end{tikzpicture}
    \caption{Test 2: $\nu$-robustness of the error $\mathrm{velERR}_{h,\tau}$ for different values of the time-DG degree $\ell \in \{ 0, 1, 2\}$.}
    \label{fig:nuRobustness}
\end{figure}
\subsection{Test 3: pressure-robustness}\label{sec:test3_pressure-robustness}
This section is devoted to verifying the pressure-robustness of the proposed numerical scheme by evaluating whether the velocity error depends on the pressure field and its discrete approximation. To this end, we prescribe the exact velocity field as
\begin{equation*}
    \u(\bx, t) = 
    \begin{pmatrix} 
        x_2 + \frac{t x_1^2}{(1 + t x_2)^3} \\
        \frac{x_1}{(1 + t x_2)^2}
    \end{pmatrix}.
\end{equation*}
By construction, the mapped velocity field satisfies $\hu \in \Vhk \otimes \mathbb{P}^0([0,1])$, therefore the proposed method should theoretically reproduce the exact velocity up to machine precision. Conversely, the exact pressure field is chosen as in the space--time convergence test (cf. Section~\ref{sec:test1_conv_rates.space-time}), which cannot be represented exactly by the discrete pressure space, thereby generating a nontrivial pressure error.

The results of this pressure-robustness test are presented in Table~\ref{tab:pre-robustness}. As expected, for both $\ell = 0$ and $\ell = 1$, the velocity error $\mathrm{velERR}_{h,\tau}$ is machine zero for all considered mesh sizes $h$, while the pressure error $\mathrm{preERR}_{T}$ is large. This contrast empirically confirms that the velocity error is entirely decoupled from the pressure approximation, in accordance with the pressure-robustness of the proposed formulation.
\begin{table}[htpb]
    \centering
    \begin{tabular}{lllll}
    \toprule
    & \multicolumn{2}{c}{$\ell = 0$} & \multicolumn{2}{c}{$\ell = 1$}\\
    \cmidrule(lr){2-3} \cmidrule(lr){4-5} 
    $h$ & $\mathrm{velERR}_{h,\tau}$ & $\mathrm{preERR}_{T}$ & $\mathrm{velERR}_{h,\tau}$ & $\mathrm{preERR}_{T}$ \\
    \midrule
    $0.1496$ & $4.07 \cdot 10^{-11}$ & $1.24 \cdot 10^{-1}$ & $4.20 \cdot 10^{-11}$ & $1.10 \cdot 10^{-1}$ \\
    $0.0793$ & $4.05\cdot 10^{-13}$ & $6.76 \cdot 10^{-2}$ & $3.48 \cdot 10^{-13}$ & $5.89 \cdot 10^{-2}$ \\    
    $0.0326$ & $1.51 \cdot 10^{-12}$ & $3.11 \cdot 10^{-2}$ & $7.43 \cdot 10^{-12}$ & $2.77 \cdot 10^{-2}$ \\
    \bottomrule 
     \end{tabular}
    \caption{Test 3: pressure-robustness test for temporal degrees $\ell \in \{0, 1\}$. The velocity error is zero up to machine precision, confirming that the velocity approximation is entirely unaffected by the pressure discretization error.}
    \label{tab:pre-robustness}
\end{table}

\subsection{Test 4: an ovalization benchmark}\label{sec:test4_ovalization}
In this final numerical test, we investigate a more physically challenging problem where the fluid motion is entirely driven by the deformation of the domain. 
We consider an initial circular domain $\Omega_0 = \{ \by \in \mathbb{R}^2 : |\by| < 1 \}$ at $t=0$, which is progressively squeezed into an ellipse through the area-preserving ALE map $\A^{\mathrm{D}}$ (cf. \eqref{eq:ALEMaps-tests}).

We solve the ALE-Stokes problem~\eqref{eq:model-problem-ALE} with zero external forcing ($\boldsymbol{f} = \boldsymbol{0}$) and a kinematic viscosity $\nu = 1$. The system is driven exclusively by the nonhomogeneous Dirichlet boundary conditions assigned on the moving boundary~$\partial\Omega(t)$, where we enforce the fluid velocity to match the domain velocity~$\u = \bw$. This test serves as a benchmark to verify whether the proposed formulation can accurately simulate fluid dynamics purely induced by the squeezing and stretching of the domain boundaries.

The spatial domain is discretized using a triangular mesh with $h \approx 0.1463$ (approximating the initial circular boundary with 64 points). For the temporal discretization, we set the final time to $T=1$, the time step to $\tau = 1/6$, and the time-DG degree to $\ell = 1$. 

Figure~\ref{fig:disc_vel} shows the discrete pressure $\pht$ and the velocity field $\uht$ at three different time steps: $t \in \{0, 0.5, 1\}$. As expected from the physical setup, the pressure distribution directly reflects the domain's deformation. We have negative pressure zones at the lateral boundaries where the mesh stretches outward, creating a suction effect. Conversely, we have positive pressure regions at the top and bottom boundaries where the domain is actively squeezed inward. 
The velocity vectors clearly follow the kinematic deformation of the mesh: the fluid flows inward along the vertical axis and is expelled outward along the horizontal axis.

Furthermore, to quantitatively assess the exact mass conservation of the proposed scheme under continuous domain deformation, we monitor the divergence of the discrete velocity. As reported in Table~\ref{tab:test4_divergence}, the $L^2$ norm of the divergence remains at the level of machine precision at all observed times, confirming that the divergence-free constraint~$\divx \uht = 0$ is strictly satisfied.
\begin{table}[htpb]
    \centering
    \begin{tabular}{cc}
        \toprule
        Time $t$ & $\| \divx \uht \|_{L^2(\Omega(t))}$ \\
        \midrule
        $0.0$ & $2.47 \cdot 10^{-14}$ \\
        $0.5$ & $1.22 \cdot 10^{-14}$ \\
        $1.0$ & $2.45 \cdot 10^{-14}$ \\
        \bottomrule
    \end{tabular}
    \caption{Test 4: $L^2$ norm of the discrete velocity divergence evaluated at different times.}
    \label{tab:test4_divergence}
\end{table}
\begin{figure}[ht]
    \centering
    \begin{subfigure}[b]{0.6\textwidth}
        \centering
        \includegraphics[width=\linewidth]{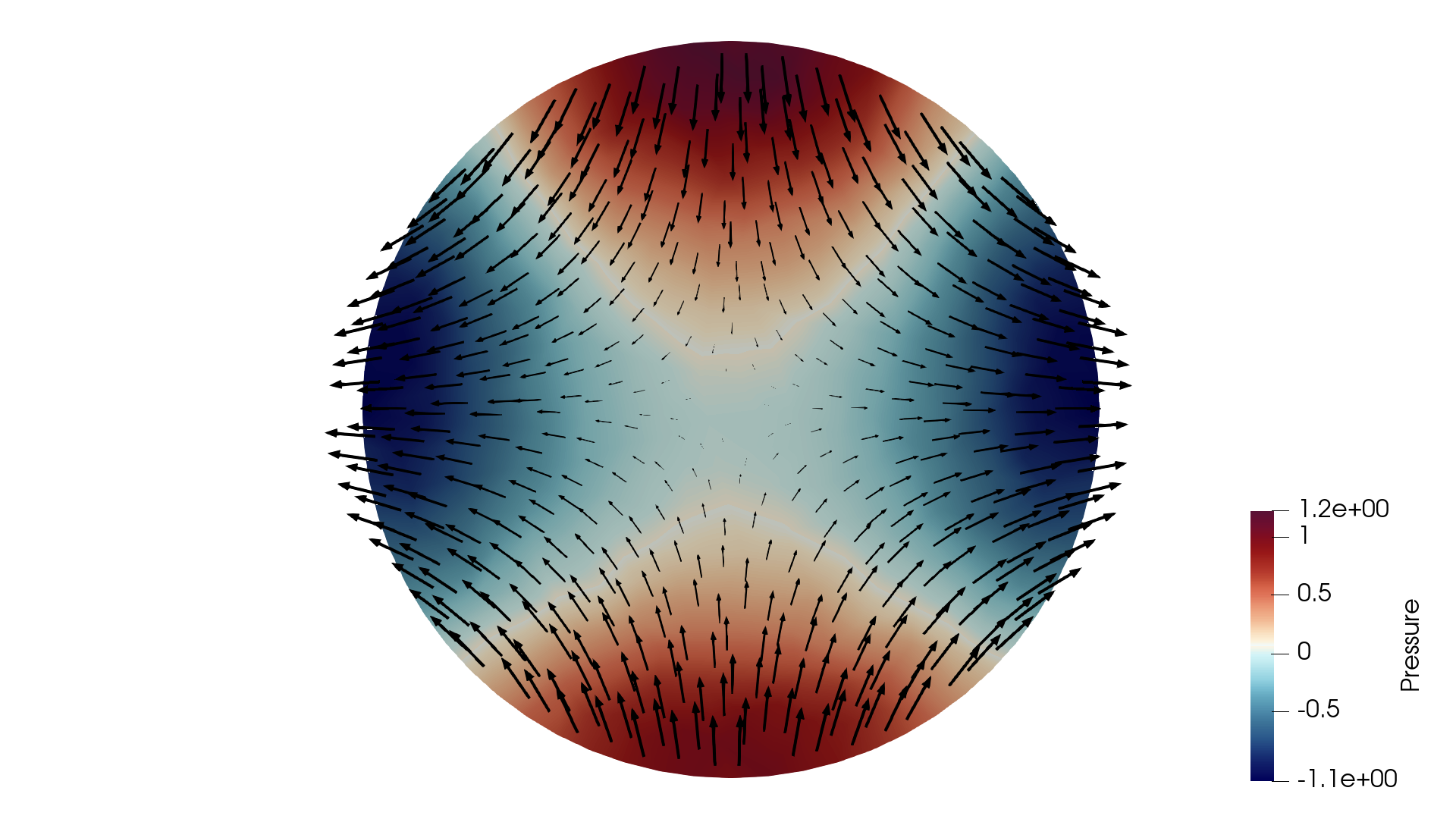}
        \caption{$t=0$}
    \end{subfigure}\\
    \centering
    \begin{subfigure}[b]{0.4\textwidth}
        \centering
        \includegraphics[width=\linewidth]{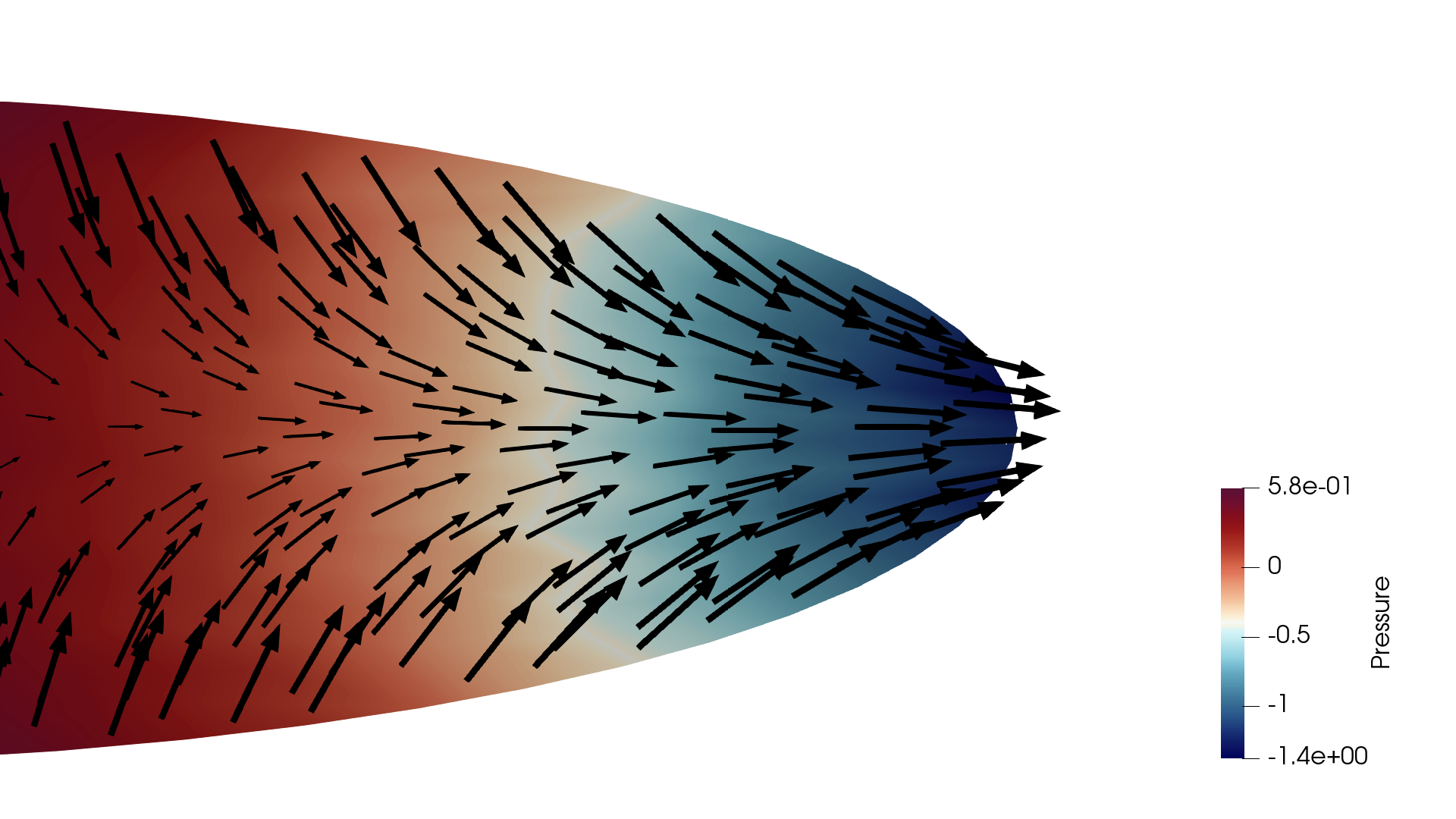}
        \caption{$t=0.5$}
    \end{subfigure}
    \hfill
    \begin{subfigure}[b]{0.4\textwidth}
        \centering
        \includegraphics[width=\linewidth]{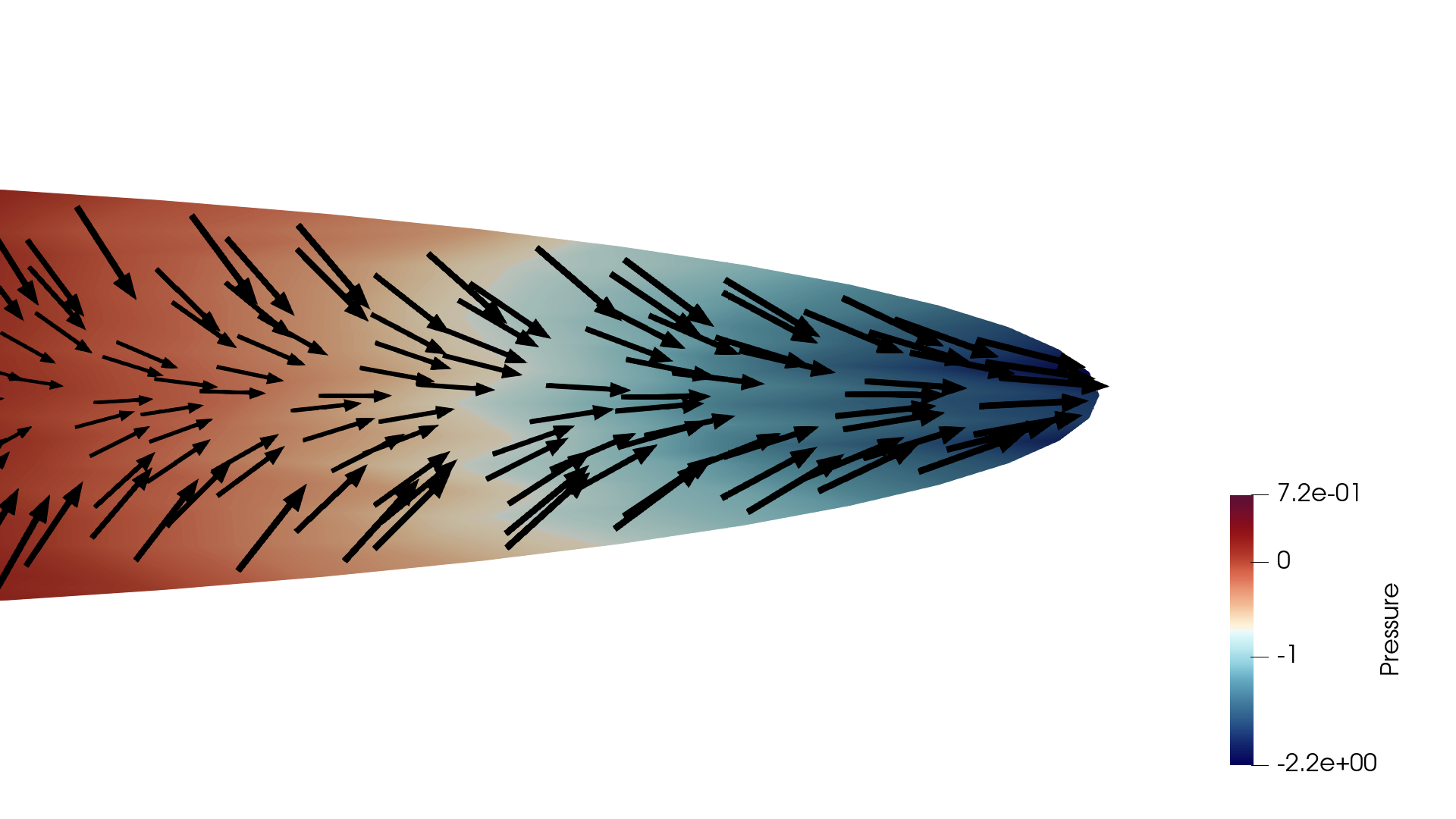}
        \caption{$t=1$}
    \end{subfigure}
    \caption{Test 4: discrete velocity $\uht$ and discrete pressure $\pht$ for different times $t \in \{0, 0.5, 1\}$.}
    \label{fig:disc_vel}
\end{figure}
\section*{Acknowledgements}
All authors were partially supported by the European Union (ERC Synergy, NEMESIS, project number
101115663). Views and opinions expressed are however those of the authors only and do not necessarily
reflect those of the EU or the ERC Executive Agency. All authors are also member of the INdAM-GNCS
group.

\bibliographystyle{plain}
\bibliography{references}
\appendix
\section{Appendix: Proof of~Lemma~\ref{lemma:equivalence} }\label{app:bounds}
In this section, we detail the hidden constants in Lemma~\ref{lemma:equivalence}. We show the proof only for the third bound, which requires careful handling of the Piola map~$\phi_t$, as the remaining ones can be shown with analogous arguments.
We first define the following constants:
\begin{alignat*}{2}
    J^{\star} &\coloneqq \esssup_{(\by, t) \in \Omega_0 \times [0, T]} |\det \Nablay \At(\by)|, &\qquad
    J_{\star} &\coloneqq \essinf_{(\by, t) \in \Omega_0 \times [0, T]} |\det \Nablay \At(\by)|, \\ 
    J_{\sup} &\coloneqq \esssup _{(\by, t) \in \Omega_0 \times [0, T]} \Norm{\Nablay \At(\by)}{\infty}, &\qquad
    \tilde{J}_{\sup} &\coloneqq \esssup _{(\by, t) \in \Omega_0 \times [0, T]} \Norm{\Nablay^2 \At(\by)}{\infty}.
\end{alignat*}
\begin{lemma}\label{lemma:equivalence.detailed}
    Let Assumption~\ref{asm:At} on the map~$\A$ hold.
    Let $\hv \in \boldsymbol {H}^2(\ThO)$  and $\widetilde{q} \in L^2(\Omega_0)$.
    For any $K \in \ThO$, any $F \in \Fh$, and any $t \in [0,T]$, the following inequalities hold:
    \begin{subequations}
        \begin{alignat}{3}
            \label{eq:trace.NEW.detailed}
            d^{-\frac{3}{2}} \frac{C_{\A}^2}{J^{\star}}  \Norm{\hv}{\L^2(\partial K)}^2 &\leq \Norm{\phi_t\hv}{\L^2(\partial \Kt)}^2 \leq d^{\frac{3}{2}} \frac{J_{\sup}^2}{J_{\star}}  \Norm{\hv}{\L^2(\partial K)}^2 , \\
            \label{eq:trace-grad.NEW.detailed}
            \Norm{\Nablax (\phi_t \hv)}{\L^2(\partial \Kt)}^2  & \leq d^{\frac{1}{2}} J_{\sup} \mathfrak{C}_1 C_{\mathrm{c.t.}}  \left( \hK^{-1} \Norm{\hv}{\L^2(K)}^2 + (\hK + \hK^{-1})  \Norm{\Nablay \hv}{\L^2(K)}^2 + \hK \Norm{\Nablay^2 \hv}{\L^2(K)}^2\right), \\
            \label{eq:equiv-vh.1.detailed}
            \Norm{\Nablax (\phi_t \hv)}{\L^2(\Kt)}^2 & \le \mathfrak{C}_1 \left(  \Norm{\hv}{\L^2(K)}^2 +  \Norm{\Nablay \hv}{\L^2(K)}^2  \right) , 
            \\
            \label{eq:equiv-vh.2.detailed}
            \Norm{\Nablay \hv}{\L^2(K)}^2 & \le \mathfrak{C}_2 \left(\Norm{\phi_t \hv}{\L^2(\Kt)}^2 + \Norm{\Nablax (\phi_t \hv)}{\L^2(\Kt)}^2 \right)  , 
            \\
            \label{eq:equiv-vh-2.detailed}
            d^{-\frac{3}{2}} \frac{c_{\A}^2}{J^{\star} J_{\sup}}
            \Norm{\jump{\hv}_{F}}{\L^2(F)}^2
            &\le \Norm{\jump{\phi_t \hv}_{\Ft}}{\L^2(\Ft)}^2 \le
            d^{\frac{3}{2}} \frac{J_{\sup}^2}{c_{\A} J_{\star}}
             \Norm{\jump{\hv}_{F}}{\L^2(F)}^2, 
            \\
            \label{eq:equiv-vh-3.detailed}
            d^{-1} \frac{c_{\A}^2}{J^{\star}}
            \Norm{\hv}{\L^2(\Omega_0)}^2
            &\le \Norm{\phi_t \hv}{\L^2(\Omega(t))}^2 \le
            d \frac{J_{\sup}^2}{J_{\star}}
            \Norm{\hv}{\L^2(\Omega_0)}^2,
            \\
            \label{eq:equiv-qht.detailed}
            \frac{1}{J^{\star}} \Norm{\widetilde{q} \circ \At^{-1}}{L^2(\Omega(t))}^2  & \le \Norm{\widetilde{q}}{L^2(\Omega_0)}^2 \le \frac{1}{J_{\star}}  \Norm{\widetilde{q} \circ \At^{-1}}{L^2(\Omega(t))}^2,
        \end{alignat}
    \end{subequations}
    where
    \begin{align*}
        \mathfrak{C}_1 &\coloneqq 3d^2 \frac{1}{c_{\A}^2} J_{\star}^{-1} \max \left( \tilde{J}_{\sup}^2 \left( d^3 \, (d!)^2 J_{\star}^{-2} J_{\sup}^{2d} +1\right) , J_{\sup}^2 \right) \\
        \mathfrak{C}_2 &\coloneqq 3 d^2  \max \left( \tilde{J}_{\sup}^2 \left(d^2 (d!)^2 J_{\star}^{-1}J_{\sup}^{2d} + J^{\star} c_{\A}^{-4} \right) , J_{\sup}^2 J^{\star} c_{\A}^{-2} \right).
    \end{align*}
    Moreover, if $\hv \in \Vhk$, then the following discrete trace estimates hold:
    \begin{subequations}
        \begin{alignat*}{3}
            \Norm{\phi_t\hv}{\L^2(\partial \Kt)}^2   &\leq d \frac{J^{\star} C_{\mathrm{d.t.}}}{c_{\A}^2} \hK^{-1}   \Norm{\phi_t\hv}{\L^2(\Kt)}^2, \\
            \Norm{\Nablax (\phi_t\hv)}{\L^2(\partial \Kt)}^2  &\leq d^{\frac{1}{2}} \frac{C_{\mathrm{d.t.}}}{c_{\A}} \hK^{-1}  \Norm{\Nablax (\phi_t\hv)}{\L^2(\Kt)}^2,
        \end{alignat*}
    \end{subequations}
    where $C_{\mathrm{c.t.}}$ and $C_{\mathrm{d.t.}}$ denote the continuous and discrete trace inequality constants, respectively.
\end{lemma}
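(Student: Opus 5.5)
The appendix says only the third bound, \eqref{eq:equiv-vh.1.detailed}, will be proved in detail, so I will target that estimate; the others follow by the same change-of-variables mechanics. Fix $t$ and $K$, and write $\v := \phi_t\hv$. On $K$ we have $\v(\At(\by)) = (\det J_t)^{-1} J_t \hv(\by)$. The first step is the chain rule, which gives
\[
\Nablax \v(\bx) = \Nablay\big((\det J_t)^{-1} J_t \hv\big)(\by)\, J_t^{-1}(\by), \qquad \bx=\At(\by).
\]
I then change variables in the integral over $\Kt$. This produces a factor $|\det J_t| \le J^\star$, and the factor $J_t^{-1}$ is bounded entrywise through the injectivity constant. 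Assumption~\ref{asm:At}\ref{asm:At-2} gives $\Norm{J_t^{-1}}{\infty}\le c_{\A}^{-1}$, which is the source of the $c_{\A}^{-2}$ in $\mathfrak{C}_1$. The remaining $d$-dependent powers come from converting between the $\ell_\infty$ and Frobenius matrix norms.

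The second step is the Leibniz rule applied to the product $(\det J_t)^{-1}J_t\hv$, which splits it into three terms. The first is $\Nablay\big((\det J_t)^{-1}\big)\otimes J_t\hv$. The second is $(\det J_t)^{-1}(\Nablay J_t)\hv$, with $\Nablay J_t = \Nablay^2\At$ bounded by $\tilde J_{\sup}$. The third is $(\det J_t)^{-1}J_t\Nablay\hv$, bounded by $J_{\sup}J_\star^{-1}$. Applying $(a+b+c)^2\le 3(a^2+b^2+c^2)$ accounts for the leading factor $3$.

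For the first term I use $\Nablay(\det J_t)^{-1} = -(\det J_t)^{-2}\Nablay\det J_t$. I bound $|\partial_{y_i}\det J_t|$ by expanding the determinant as a sum of $d!$ products of $d$ entries and differentiating each product; every derivative yields one factor $\tilde J_{\sup}$ and $d-1$ factors $J_{\sup}$. This is where the $(d!)^2 J_{\sup}^{2d}J_\star^{-2}$-type factor comes from. Grouping the $\hv$ terms gives the coefficient $\tilde J_{\sup}^2(\cdots+1)$, and the $\Nablay\hv$ term gives the coefficient $J_{\sup}^2$. The hypothesis $\A\in W^{1,\infty}(0,T;\W^{2,\infty})$ makes all of these quantities uniform in $t$. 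Taking the maximum of the two coefficients yields $\mathfrak{C}_1$.

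I expect the main obstacle to be the bookkeeping rather than any conceptual step. The difficulty is tracking the matrix-norm equivalences and the determinant derivative so that the constants match the stated form; the exponents of $J_\star$ in particular need care. Reverse bounds such as \eqref{eq:equiv-vh.2.detailed} follow the same scheme with $\phi_t^{-1}$, which is why $J^\star c_{\A}^{-4}$ appears there.

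The other estimates I would only sketch:
\begin{itemize}
\item The $L^2$ equivalences \eqref{eq:equiv-vh-3.detailed} and \eqref{eq:equiv-qht.detailed} are a direct change of variables.
\item The facet estimates \eqref{eq:trace.NEW.detailed} and \eqref{eq:equiv-vh-2.detailed} use the surface Jacobian $|\det J_t|\,|J_t^{-\top}\nF|$, bounded above and below using $J_{\sup}$, $J_\star$, $J^\star$ and $c_{\A}$.
\item The gradient-trace bound \eqref{eq:trace-grad.NEW.detailed} applies the continuous trace inequality on $K$ to the pulled-back gradient, followed by the same product expansion one derivative higher.
\item The discrete trace estimates map back to $K$, use the polynomial discrete trace inequality for $\hv\in\Vhk$, and then map forward again.
\end{itemize}
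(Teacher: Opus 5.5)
Your detailed argument for \eqref{eq:equiv-vh.1.detailed} follows the paper's proof, which also treats only that bound in detail. The shared steps are: the chain rule through $\At^{-1}$; the change of variables; $\Norm{J_t^{-1}}{}\le d^{1/2}c_{\A}^{-1}$ from Assumption~\ref{asm:At}\ref{asm:At-2}; the three-term Leibniz split (the paper's $\mathfrak{M}_1$ plus the two pieces of $\mathfrak{M}_2$) with the factor $3$; and the permutation expansion giving $|\partial_{y_j}\det J_t|\le d\,d!\,J_{\sup}^{d-1}\tilde J_{\sup}$.

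One bookkeeping point matters if you want to land exactly on $\mathfrak{C}_1$: do not bound the change-of-variables factor $|\det J_t|$ by $J^\star$ on its own. The paper keeps it inside $\mathfrak{T}_{t,1}$ and $\mathfrak{T}_{t,2}$. There it cancels against $(\det J_t)^{-4}$ and $(\det J_t)^{-2}$, leaving $J_\star^{-3}$ and $J_\star^{-1}$. Done your way, every coefficient carries an extra factor $J^\star/J_\star\ge 1$.

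Two of your sketched bounds would not go through as written.

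For \eqref{eq:trace-grad.NEW.detailed}, you apply the continuous trace inequality on $K$ to the pulled-back gradient $\Nablay\big((\det J_t)^{-1}J_t\hv\big)J_t^{-1}$. That needs its $\Nablay$-derivative, which contains $\Nablay^3\At$ (through $\Nablay^2 J_t$ and $\Nablay^2\det J_t$). Assumption~\ref{asm:At} only provides $\W^{2,\infty}$ regularity in space. The fix is to reverse the order:
\begin{itemize}
\item bound $|\Nablax(\phi_t\hv)\circ\At|^2\lesssim|\hv|^2+|\Nablay\hv|^2$ pointwise, using the same expansion;
\item map the facet integral back to $\partial K$;
\item apply the continuous trace inequality to $\hv$ and to $\Nablay\hv$ separately.
\end{itemize}
This ordering is exactly what produces the stated right-hand side $\hK^{-1}\Norm{\hv}{\L^2(K)}^2+(\hK+\hK^{-1})\Norm{\Nablay\hv}{\L^2(K)}^2+\hK\Norm{\Nablay^2\hv}{\L^2(K)}^2$.

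For the discrete gradient trace estimate, the pulled-back gradient is not a polynomial. Your chain ``map back, polynomial discrete trace, map forward'' therefore passes through $\hK^{-1}\Norm{\hv}{\L^2(K)}^2$. That term cannot be bounded by $\Norm{\Nablax(\phi_t\hv)}{\L^2(\Kt)}^2$: take $\hv$ constant on $K$ and $\At$ affine on $K$. What your chain actually proves is the weaker bound $\Norm{\Nablax(\phi_t\hv)}{\L^2(\partial\Kt)}^2\lesssim\hK^{-1}\big(\Norm{\phi_t\hv}{\L^2(\Kt)}^2+\Norm{\Nablax(\phi_t\hv)}{\L^2(\Kt)}^2\big)$. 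The paper gives no argument for the pure-gradient form either. The weaker form still suffices for Lemma~\ref{lemma:coercivity-aht} once combined with Lemma~\ref{lem:dpi}.
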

\begin{proof}[Proof of bound~\eqref{eq:equiv-vh.1.detailed}]
    We start by noticing that, from the definition of the Piola map $\phi_t$ and applying the chain rule, we have
    \[
        \Nablax (\phi_t \hv_h) (\bx) = \Big[((\mathfrak{M}_1 + \mathfrak{M}_2)  \circ \At^{-1}) \Nablax (\At^{-1})\Big] (\bx),
    \]
    with
    \[ 
        \mathfrak{M}_1 \coloneqq \Nablay ( (\det \Nablay \At)^{-1}) \otimes \left[\Nablay \At \hv_h \right] \quad \text{and} \quad 
        \mathfrak{M}_2 \coloneqq (\det \Nablay \At)^{-1} ( \Nablay^2 \At \star \hv_h+\Nablay \At \Nablay \hv_h).
    \]
    Above, $\Nablay ( (\det \Nablay \At)^{-1}) \otimes \left[\Nablay \At \hv_h \right]$ denotes the tensor product between the vectors $\Nablay ( (\det \Nablay \At)^{-1})$ and $\Nablay \At \hv_h$, while $\Nablay^2 \At \star \hv_h$ denotes the tensor contraction along the third index between the third-order tensor $\Nablay^2 \At $ and the vector $\hv_h$.
    A change of variable  leads to
    \begin{equation*}
        \Norm{\Nablax (\phi_t \hv_h)}{\L^2(\Kt)}^2 =
        \int_{K}
        \Norm{[(\mathfrak{M}_1 + \mathfrak{M}_2) (\by)] [ \Nablax (\At^{-1}) (\At (\by))]}{}^2 \left| \det \Nablay \At (\by) \right| \dy
    \end{equation*}
    Applying the triangle and the Cauchy--Schwarz inequalities, along with the fact that (recalling Assumption~\ref{asm:At}\ref{asm:At-2})
    $$
        \Norm{\Nablax (\At^{-1}) (\At (\by))}{} = \Norm{[\Nablay \At (\by)]^{-1}}{} \leq d^{1/2}\Norm{[\Nablay \At (\by)]^{-1}}{\infty} \leq  \frac{d^{1/2}}{c_{\A}} \, ,
    $$
    we obtain
    \begin{equation}\label{eq:grad.piola.map.proof.init}
        \Norm{\Nablax (\phi_t \hv_h)}{\L^2(\Kt)}^2 
        \leq
        3d \frac{1}{c_{\A}^2} \int_{K} (\mathfrak{T}_{t,1} + \mathfrak{T}_{t,2})(\by) \dy,  
    \end{equation}
    with 
    $\mathfrak{T}_{t,1} (\by)  \coloneqq  \Norm{\mathfrak{M}_1 (\by)}{}^2 \left| \det \Nablay \At (\by) \right|$ and $\mathfrak{T}_{t,2} (\by)  \coloneqq \Norm{\mathfrak{M}_2 (\by)}{}^2 \left| \det \Nablay \At (\by) \right|$.
    
    The second term $\mathfrak{T}_{t,2} (\by)$ is easily bounded using the constants introduced at the beginning of the section
    \begin{equation}\label{eq:grad.piola.map.proof.term2.3}
        \mathfrak{T}_{t,2} (\by) \leq d J_{\star}^{-1} \tilde{J}_{\sup}^2 \left( | \hv_h (\by) |^{2}  + \Norm{\Nablay \hv_h (\by)}{}^2\right),
    \end{equation}
    where the constant $d$ comes from the equivalence between the Euclidean and the $\ell^{\infty}$ norm.
    
    For the first term $\mathfrak{T}_{t,1} (\by)$, using the tensor product property, basic derivation rules, and the Cauchy--Schwarz inequality, we write
    \begin{align*}
        \mathfrak{T}_{t,1} (\by) &\leq 
        \left| \det \Nablay \At (\by) \right|^{-3} \Norm{\Nablay \At (\by) }{}^2 |\Nablay (\det \Nablay \At (\by))|^2 | \hv_h (\by) |^{2}\\ 
        &\leq
        d^2 J_{\star}^{-3} J_{\sup}^2 |\Nablay (\det \Nablay \At (\by))|_{\ell^{\infty}}^2 | \hv_h (\by) |^2.
    \end{align*}
    Observing that, for any $j \in \{ 1,\ldots,d \}$,
    \begin{align*}
        | \partial_{j} (\det \Nablay \At (\by)) |
        &=  \left| \sum_{\sigma \in \mathcal{S}_d} \mathrm{sign} (\sigma) \partial_{j} \left(\prod_{i=1}^d \partial_{\sigma(i)} ( (\At (\by) )_i  ) \right) \right|   \\ 
        &= \left| \sum_{\sigma \in \mathcal{S}_d} \mathrm{sign} (\sigma) \left[ \sum_{i=1}^d
        \partial_{j} \partial_{\sigma(i)} ( (\At (\by) )_i  )  \prod_{\ell\neq i}  \partial_{\sigma(\ell)} ( (\At (\by) )_{\ell}  ) 
         \right] \right| \\
         &\leq 
          \sum_{\sigma \in \mathcal{S}_d} \left[ \sum_{i=1}^d \big| [\Nablay^2 \At (\by)  ]_{i,\sigma(i),j} \big|  \prod_{\ell\neq i}  
          \big| [\Nablay \At (\by)]_{\ell,\sigma(\ell)}  \big|
         \right]  \\ 
         &\leq 
         d \, d! \,  J_{\sup}^{d-1} \tilde{J}_{\sup},
    \end{align*}
    we conclude that
    \begin{equation}\label{eq:grad.piola.map.proof.term1}
        \mathfrak{T}_{t,1} (\by) \leq
        d^4 \, (d!)^2 J_{\star}^{-3} J_{\sup}^{2d} \tilde{J}_{\sup}^2  | \hv_h (\by) |^2.
    \end{equation}
    Plugging \eqref{eq:grad.piola.map.proof.term2.3} and \eqref{eq:grad.piola.map.proof.term1} into \eqref{eq:grad.piola.map.proof.init} yields the desired result.
\end{proof}
\end{document}